
\documentclass{article}
\usepackage{t1enc,amsfonts,latexsym,amsmath,amsthm,verbatim,amssymb,graphics,mathdots,pstool,color,wrapfig,graphicx,xcolor,hyperref}
\usepackage{tikz}
\usepackage{pgfplots}
\usepackage[ruled,vlined]{algorithm2e}
\newcommand{\rank}{\mathsf{rank}}

\renewcommand{\Re}[1]{\mathsf{Re}(#1)}

\DeclareMathOperator*{\argmin}{arg\,min }

\newcommand{\scal}[1]{\left \langle #1 \right \rangle}
\newcommand{\para}[1]{\left( #1\right)}

\newtheorem{theorem}{Theorem}[section]
\newtheorem{proposition}[theorem]{Proposition}
\newtheorem{lemma}[theorem]{Lemma}

\def\re{{\mathsf {Re}}}
\def\A{{\mathcal A}}

\def\V {{\mathcal V }}
\def\M{{\mathbb M}}

\def\G{{\mathcal G}}

\def\R{{\mathbb R}}
\def\S{{\mathcal{S}}}
\def\Q{{\mathcal{Q}}}

\def\ep{{{ \epsilon}}}

\def\A{{\mathcal{A}}}

\newcommand{\skal}[1]{\langle #1 \rangle}

\title{An unbiased approach to low rank recovery}
\author{Marcus Carlsson\footnotemark[1] \and Daniele Gerosa\footnotemark[1] \and Carl Olsson \thanks{Centre for Mathematical Sciences , Lund University, Box 118, SE-22100, Lund,  Sweden,  {calle@maths.lth.se, marcus.carlsson@math.lu.se,daniele.gerosa@math.lu.se}}
\thanks{Electrical Engineering, Chalmers University of Technology, caols@chalmers.se}}
\date{}

\begin{document}

\maketitle
\begin{abstract}
Low rank recovery problems have been a subject of intense study in recent years.
While the rank function is useful for regularization it is difficult to optimize due to its non-convexity and discontinuity.
The standard remedy for this is to exchange the rank function for the convex nuclear norm, which is known to favor low rank solutions under certain conditions. On the downside the nuclear norm exhibits a shrinking bias that can severely distort the solution in the presence of noise, which motivates the use of stronger non-convex alternatives.
In this paper we study two such formulations. We characterize the critical points and give sufficient conditions for a low rank stationary point to be unique. Moreover, we derive conditions that ensure global optimality of the low rank stationary point and show that these hold under moderate noise levels.
\end{abstract}
\noindent \textbf{Keywords.} Quadratic envelope regularization, low rank completion, non-convex optimization.
\newline
\noindent \textbf{AMS subject classification.} 49M20, 49N25, 65K10.

\section{Introduction}\label{sec:intro}
Recovering a low rank matrix from noisy measurements is a problem that is frequently occurring in many applications.
Typically we are trying to recover a matrix $X$ from a set of noisy observations $\A X \approx b$ of linear combinations of the elements in $X$. Here $\A$ is a linear operator $\M_{n_1,n_2} \mapsto \R^m$, where $\M_{n_1,n_2}$ is the set of matrices of size $n_1\times n_2$ with real or complex coefficients, and $b \in \R^m$.
The linear system is often vastly under-determined and therefore regularization in the form of a rank penalty
or constraint is usually applied, for example one may consider minimization of
\begin{equation}
\mu \text{rank} (X) + \|\A X - b\|^2_2
\label{eq:mupenalty}
\end{equation}
where $\mu$ is a parameter whose size determines the rank of the global minimizer (or minimizers) of \eqref{eq:mupenalty}. If the desired rank, say $K$, is known a priori one ideally wishes to solve \begin{equation}\label{best}\argmin_{\text{rank} X\leq K}\|\A X-b\|_2,\end{equation} whose solution, whenever it is unique, we will name the ``best rank $K$ solution''. However, due to the non-linearity of the manifold  $R_K = \{X\in\R^{n_1 \times n_2}; \text{rank}(X)\leq K\}$, this problem can be very difficult, and there are no algorithms that are guaranteed to find the global minima (for all $\A$). The same holds for \eqref{eq:mupenalty} since $\rank(X)$ is both non-convex and discontinuous.

In this paper we provide algorithms for solving both \eqref{eq:mupenalty} and \eqref{best}, along with theory proving that they work, given that $\A$ satisfies certain restrictions. In order to put both theorems in the same framework, note that \eqref{best} can be rewritten as a global optimization problem as follows
\begin{equation}
\iota_{R_K}(X) + \|\A X - b\|^2_2,
\label{eq:fixedrank}
\end{equation}
where
\begin{equation}
\iota_{R_K}(X) =
\begin{cases}
0 & X \in R_K \\
\infty & \text{otherwise}
\end{cases}\label{iotaRk}
\end{equation}

The by now standard approach to retrieve approximate solutions to the above problems is to replace the rank function with the convex nuclear norm $\|X\|_* = \sum_i \sigma_i(X)$ (where $\sigma(Y)$ denotes the singular values, see e.g.~\cite{candes-etal-acm-2011,recht-etal-siam-2010}), resulting in the relaxation
\begin{equation}
\lambda \|X\|_*+\|\A X - b\|^2_2.
\label{eq:nuclearobj}
\end{equation}
where $\lambda$ is a parameter that can be used to tune the solution until the desired rank restriction is met.
In \cite{recht-etal-siam-2010} the notion of restricted isometry property (RIP) was introduced to the matrix setting; RIP holds for the operator $\A$ if it fulfills
\begin{equation}
(1-\delta_K)\|X\|_F^2 \leq \|\A X\|^2_2 \leq (1+\delta_K)\|X\|_F^2,
\label{eq:matrisRIP}
\end{equation}
for all $X$ with $\text{rank}(X) \leq K$.
Since then a number of generalizations that give performance guarantees for the nuclear norm relaxation have appeared \cite{oymak2011simplified,candes-etal-acm-2011,candes2009exact}.

While the convexity of the nuclear norm simplifies inference it also introduces a shrinking bias \cite{oymak-etal-2015,mohan2010iterative,larsson-olsson-ijcv-2016,grussler-etal-arxiv-2016,hu-etal-pami-2013,oh-etal-pami-2016,canyi2015,gu-2016}; the rank function assigns a constant penalty to any non-zero singular value, independently of its size, whereas the nuclear norm penalty is explicitly based on the magnitude of the singular values.
In high noise settings, where a large regularization weight $\lambda$ is required, \eqref{eq:nuclearobj} often produce solutions that are far from the ground truth \cite{oymak-etal-2015,mohan2010iterative,larsson-olsson-ijcv-2016,grussler-etal-arxiv-2016,hu-etal-pami-2013,oh-etal-pami-2016,canyi2015,gu-2016}.
Thus researchers have designed algorithms for non-convex formulations \cite{oymak-etal-2015,mohan2010iterative,hu-etal-pami-2013,oh-etal-pami-2016,canyi2015,gu-2016}. These methods however usually only guarantee convergence to a stationary or locally optimal point.
In \cite{larsson-olsson-ijcv-2016,parekh-selesnick-spl-2016,grussler-etal-arxiv-2016} it was observed that it is sometimes possible to use a non-convex regularizer and still get a convex problem, when the data term is sufficiently convex. For example, \cite{larsson-olsson-ijcv-2016} showed that the (lower semi-continuous) convex envelope of
\begin{equation}
\mu \text{rank}(X) + \|X-M\|_F^2,
\label{eq:murankapprox}
\end{equation}
is
\begin{equation}
\sum_i r_\mu(\sigma_i(X)) + \|X-M\|_F^2,
\label{eq:murankconvenv}
\end{equation}
where $r_\mu(\sigma) = \mu - \max(\sqrt{\mu}-\sigma,0)^2$. In particular, the global optimizers of \eqref{eq:murankapprox} and \eqref{eq:murankconvenv} are the same (assuming \eqref{eq:murankapprox} has a unique solution).
The more general problem
\begin{equation}
\sum_i r_\mu(\sigma_i(X)) + \|\A X-b\|^2_2,
\label{eq:muregprob}
\end{equation}
is not necessarily convex, but it follows from the work in \cite{carlsson2018convex} that it has the same global minimizers as \eqref{eq:mupenalty}
if $\|\A\| < 1$, where $\|\A\|$ denotes the operator norm of $\A$. This is because it turns out that the first term in \eqref{eq:muregprob} is the so called ``quadratic envelope'' of $\mu \rank(X)$ (see Theorem~\ref{celo1} in \cite{carlsson2016convexification} for details).

In this paper we will further study the problem \eqref{eq:muregprob} and its relation to \eqref{eq:mupenalty}, and simultaneously derive an analogous theory for \eqref{eq:fixedrank} and the corresponding expression with $\iota_{R_K}$ replaced by its quadratic envelope. We study the distribution of stationary points of these problems and show that under certain conditions the low rank stationary points are unique. We then give additional conditions that ensure that the low rank stationary point actually equals the best rank $K$ solution (for a suitable $K$), and finally show that these are fulfilled as long as the noise level is not severe. The theorems, which are briefly presented in Section \ref{sec key}, are based on concrete estimates as opposed to the by now usual asymptotic probabilistic arguments which give results that usually apply for very large matrix sizes. The results are analogous to those presented in \cite{carlsson2020unbiased}, where the vector counterpart of the above problems was studied.

\subsection{Shrinking Bias}\label{sec:bias}
Before we present our theoretical results we first give a brief explanation of the shrinking bias of the nuclear norm which motivates the use of non-convex regularizers. First consider the problem of minimizing \eqref{eq:murankapprox}
using the relaxations \eqref{eq:murankconvenv} and
\begin{equation}
2\sqrt{\mu}\|X\|_* + \|X-M\|_F^2.
\label{eq:nuclearprox}
\end{equation}
In both of these cases a closed form solution can be obtained from the SVD of $M$.
In the first case \eqref{eq:murankconvenv} the solution is the so called hard thresholding of $M$. More precisely, if $M = U \Lambda_{\sigma(M)} V^T$ is a singular value decomposition of $M$, where $\Lambda_{\sigma(M)}$ is a diagonal matrix containing the singular values $\sigma(M)$, then the solution is given by $X = U \Lambda_{\sigma(X)} V^T$, where $\sigma_i(X)$ equals $\sigma_i(M)$ for all indices $i$ such that $\sigma_i(M)>\sqrt{\mu}$, zero for all indices such that the reverse inequality holds, and any number in the interval $[0,\sqrt{\mu}]$ if it happens that $\sigma_i(M)=\sqrt{\mu}$ (see e.g.~\cite{larsson-olsson-ijcv-2016} for details).
Note that, whenever $\sigma_i(M)\neq \sqrt{\mu}$ for all $i$, this is also the solution of the original unrelaxed formulation \eqref{eq:murankapprox}.
For \eqref{eq:nuclearprox} we instead get the so called soft thresholding \cite{cai-etal-2008}, given by
\begin{equation}
\sigma_i(X) = \begin{cases}
0 & \sigma_i(M)  < \sqrt{\mu}\\
\sigma_i(M)-\sqrt{\mu} & \text{otherwise}\\
\end{cases}.
\end{equation}
Here we have chosen the regularization weights, $\mu$ and $2\sqrt{\mu}$ respectively, so that the two methods are able to suppress an equal amount of noise (which we assume is what accounts for the singular values that are smaller than $\sqrt{\mu}$).
However to suppress this level of noise the nuclear norm has to subtract equally much from the large singular values (that corresponds to the matrix we want to recover).

\begin{figure}
\includegraphics[width=60mm]{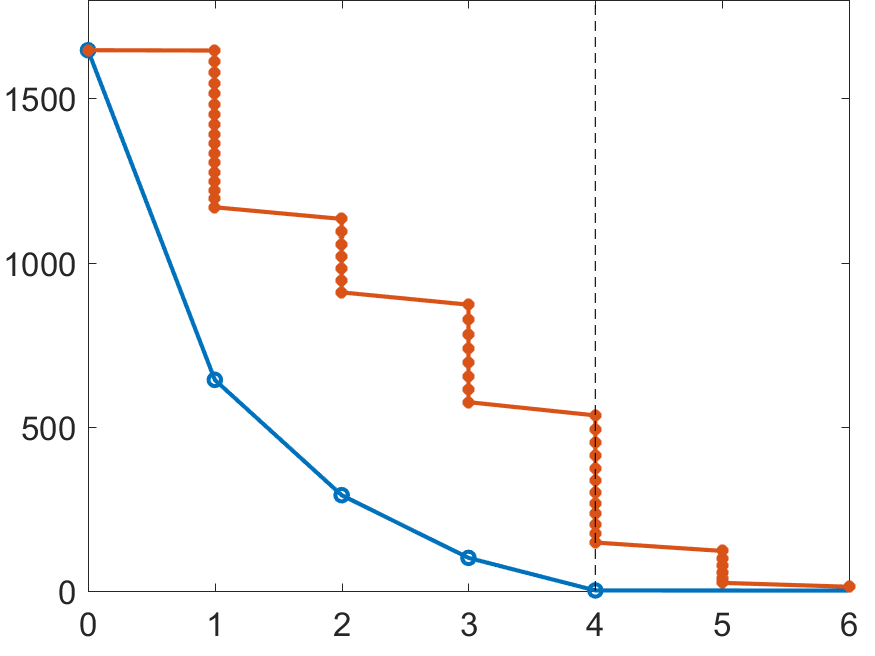}
\includegraphics[width=60mm]{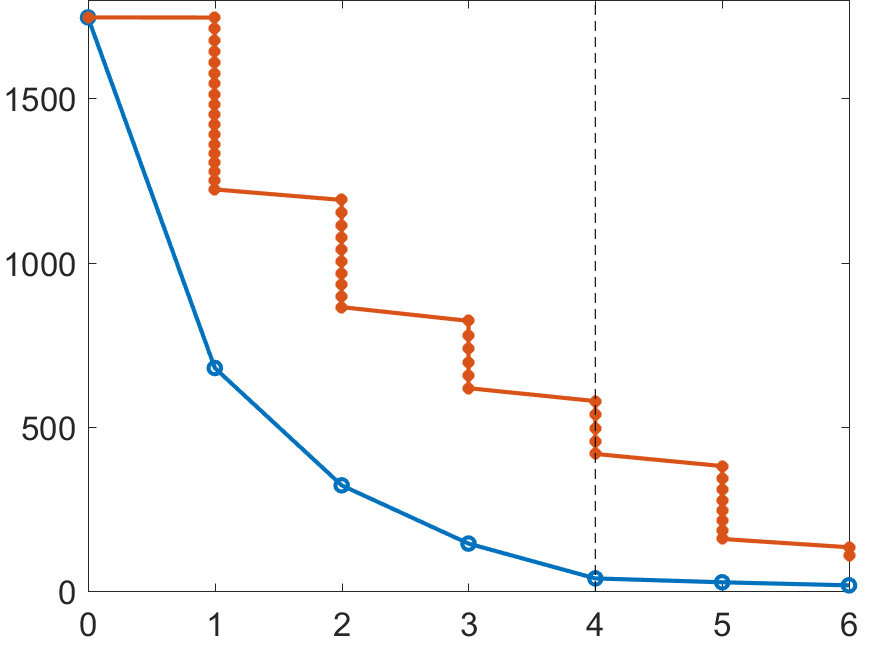}
\caption{Rank (x-axis) vs. data fit $\|\A X- b\|_F^2$ (y-axis) using \eqref{eq:nuclearobj} (orange curves) and \eqref{eq:muregprob} (blue curves). The noise is i.i.d. Gaussian with std $0.1$ (left) and $0.5$ (right). Each run with a different $\mu$ or $\lambda$ is represented by a dot. The reason this is not visible in the blue curve is that for different values of $\mu$, the algorithm finds the same point as long as the rank does not change. }
\label{fig:nncomp}
\end{figure}

For the above example with the data term $\|X-M\|_F^2$ the matrices $U$ and $V$ in the SVD of $X$ and $M$ are the same, and the problem essentially simplifies to a vector problem involving only the singular values. With a more general data term of the type $\|\A X -b\|^2_2$ this is no-longer the case, and the solution obtained with the nuclear norm generally have different singular vectors.
Since there is no closed form solution for this case we present a simple numerical evaluation, comparing \eqref{eq:nuclearobj} and \eqref{eq:muregprob} in Figure~\ref{fig:nncomp}.
The data was generated in the following way: First we constructed a rank 4 matrix $X$ of size $20 \times 20$ by selecting random matrices $U$ and $V$ of size $20 \times 4$ with i.i.d Gaussian entries with standard deviation $1$.
We then randomly selected an operator $\A$ represented by a $300 \times 400$ with i.i.d. Gaussian elements with standard deviation $\frac{1}{\sqrt{300}}$. It is known that operators of this type fulfills RIP with large probability \cite{recht-etal-siam-2010} (at least asymptotically). We then created the observation vector using $b=\A X + \epsilon$ where $\epsilon$ is Gaussian with standard deviation $s$. For the graphs in Figure~\ref{fig:nncomp} we used $s=0.1$ and $s=0.5$ to illustrate the effects of noise on the performance of the two methods. To circumvent issues with selecting optimal regularization weights (i.e.~$\lambda$ and $\mu$) for the two formulations we instead tested a range of values and plotted the resulting rank versus the data fit of the obtained solutions. It is clear that \eqref{eq:muregprob} gives better data fit for all ranks then \eqref{eq:nuclearobj}.
The difference between the two methods is larger when the noise level is larger due to the fact that the nuclear norm has to suppress a larger magnitude of the singular values to remove the noise. An interesting observation is that  \eqref{eq:muregprob} gives the same data fit regardless of $\mu$ as long as rank is the same. (Note that all curves contain 100 data points, however for \eqref{eq:muregprob} only one for each rank is visible since the rest are identical.) In contrast, to achieve the best possible performance with \eqref{eq:nuclearprox} $\lambda$ needs to be selected as small as possible while still yielding the correct rank. From a practical point of view it is preferable not to have to search for this value.

\subsection{Oracle type solutions for matrix recovery}\label{sec oracle}

Now assume that $b$ is of the form $\A X_0+\ep$ where $\ep$ is noise and $X_0$ is a matrix which has low rank $K$, which we will refer to as ``ground truth''. In the vector counterpart to the problems considered here, one usually has a sparse  vector $x_0$ (i.e.~a vector with few non-zero entries) whose support is known, and the best possible solution in this scenario is the so called ``oracle solution'' $x_{oracle}$, obtained by solving the equation system while imposing that the solution is zero outside of the known support. Note that we generally have $x_0\neq x_{oracle}$ due to the noise $\ep$. In \cite{carlsson2020unbiased} we proved that the global minimizer to the vector counterpart of the problems considered here equals $x_{oracle}$, given that certain assumptions are fulfilled (see e.g.~Corollary 2.3 of \cite{carlsson2020unbiased}). For the matrix case, it is not clear what the oracle solution should be. For example, in \cite{candes2011tight} it is suggested that the oracle solution $X_S$ be the one that you get if the ``oracle'' tells you the range of $X_0$, and you find $X_S$ by solving the linear problem
$$\A X_S=b,\quad \text{Ran} X_S\subseteq\text{Ran} X_0,$$
in the sense that \(X_S\) solves \( \min_X \|\A X - b\|^2_2 \) with the constraint \(\text{Ran}X_S\subseteq\text{Ran} X_0 \).
However, usually the range of $X_0$ has no particular meaning and in terms of data-fit this solution is suboptimal when compared to the ``best rank $K$ solution'', i.e.~the solution to the non-linear problem \eqref{best}, assuming only knowledge of $K$. The key message of this paper is that the methods proposed here have a high chance of finding this solution under appropriate assumptions on the noise level, matrix $\A$ and structure of $\sigma(X_0)$.

We now provide a simple example showing that some conditions are necessary, for otherwise it can even happen that the best rank $K$ solution does not exist. Consider the $2\times 2$ case with $K=1$, set $\A(X)=(x_{12},x_{21},x_{22})$ and $b=(1,1,0)$ so that $\A(X)=b$ becomes the equation system $x_{12}=x_{21}=1$ and $x_{22}=0$. Then $$X_k=\left(
                                                  \begin{array}{cc}
                                                    k & 1 \\
                                                    1 & 1/k \\
                                                  \end{array}
                                                \right) =
\left(                                                  \begin{array}{c}
k \\
1 \\
\end{array}
\right)
\left(                                                  \begin{array}{cc}
1&
1/k \end{array}
\right).
$$
is of rank 1 clearly satisfies $\|\A X_k-b\|_2\rightarrow 0$, but no rank 1 matrix can satisfy $\|\A X-b\|_2=0$.

A similar problem appears in this case for \eqref{eq:mupenalty} and its regularization \eqref{eq:muregprob} with $\mu=1$, say. With $\A$ and $b$ as above, the global minimum in both cases is then easily seen to be one, which is never attained. However, in this case we also have \begin{equation}\label{zero}\underset{\text{rank} X\leq 1}{\inf}\frac{\|\A X\|_2}{\|X\|_F}=0,\end{equation}
which is indicative of an ill-posed problem. In a sense it is an indication that we do not have enough measurements for determining a unique rank 1 solution. Another way of putting it is that then the RIP constant $\delta_1$ then is 1 or larger, and it is well known that even minimizing \eqref{eq:nuclearobj} does not necessarily yield a unique solution in this case.
In the next section we introduce theoretical conditions which rule out cases as the above one.

\subsection{Restrictions on $\A$ and the LRIP-condition}\label{sec:restrictiononA}

It is notoriously hard to determine if a problem instance has ``good'' RIP-values (cfr.~\eqref{eq:matrisRIP}) for a concrete application of fixed dimension. Classic compressed sensing theorems like those in \cite{CandesTaoRomberg2006} assume knowledge of the magnitude of \( \delta_K \) that can however be estimated only in probabilistic models (i.e.~where the measurements matrix \(A\) is random, drawn for instance with Gaussian or Sub-Gaussian distribution) and only in asymptotic scenarios, i.e.~when some parameter related to the dimension approaches infinity. Nontrivial deterministic models with prescribed RIP-values are very hard to construct (cfr.~\cite{Bourgain2011}). We refer to \cite{carlsson2020unbiased}, Section 2.3, for a more elaborate discussion on this.

For the theory developed in this paper we only need the lower estimate in \eqref{eq:matrisRIP}. We thus introduce the ``lower restricted isometry constant'' (LRIP) $\delta_K^-$ by setting $$1-\delta_K=\inf\left\{\frac{\|\A X\|^2_2}{\|X\|_F^2}:~ X\neq 0,~\text{rank}(X)\leq K\right\},$$ and we say that $\A$ satisfies an LRIP-condition of order $K$ if $\delta_K^{-}<1$ (c.f. \eqref{zero}). Clearly $\delta_K^-\leq \delta_K$ with equality whenever the lower bound in \eqref{eq:matrisRIP} is achieved. For some of our stronger theorems we will also use the assumption $\|\A\|\leq 1$, which clearly implies that $\delta_K^-=\delta_K$. Note however that we can have $\delta_K^-<0$, in contrast with $\delta_K$ which is automatically non-negative. 

As a remark, we note that LRIP-constants can be introduced for any operator $\A$ on any vector space, if the condition $\text{rank}(X)\leq K$ is swapped for some other condition. For example, if $\A$ is a matrix, $X$ is a vector and the condition is that $\text{card}(X)\leq K$, then we retrieve the LRIP-constants that were used in \cite{carlsson2020unbiased} and previously studied in \cite{blanchard2011compressed}. Thus the acronym LRIP can refer to different things depending on the context, just like RIP which was first invented for vectors and then modified in \cite{recht-etal-siam-2010} for matrices.


Simply assuming that $\delta_{K}^-<1$ gives that a best rank $K$ solution (c.f.~\eqref{best}) exists, although it may still be a set. To see this, suppose that  \((X_n)_{n=1}^\infty\) is a sequence of matrices with rank \( \le K \) such that $$ \lim_{n\rightarrow \infty}\|\A X_n-b\|_2 = \underset{\text{rank} X\leq K}{\inf}\|\A X-b\|_2 .$$ If $\| X_n \|_F$ converges to $ \infty $ then $ (1-\delta_K^-) \|X_n\|_F^2 \le \|\mathcal{A}(X_n)\|_2^2 \to \infty $ which would imply $ \|A(X_n) - b\|_2 \to \infty ,$ a contradiction.  It follows that \((X_n)_{n=1}^\infty\) is bounded and hence, by a standard compactness argument, that we can extract a subsequence which converges to a solution of \eqref{best}.
In the coming material we shall find that if $\delta_{2K}^-$ is sufficiently close to 0, then \eqref{best} has a unique solution.

\subsection{Key contributions}\label{sec key}

We first discuss our results in the concrete case of minimizing \eqref{eq:muregprob}. As an example of the type of results we provide, we have:

\begin{theorem}\label{glassintro}
Suppose that $b=\A X_0+\ep$ where $\|\A\|<1$ and $\emph{rank} (X_0)=K.$ Assume that
\[\|\ep\|_2\leq  \frac{ (1-\delta_{2K}^-)^{3/2}\sqrt{\mu}}{ 3}     \] and $$\sigma_K(X_0)>\para{\frac{1}{1-\delta_{2K}^-}+{(1-\delta_{2K}^-)}}\sqrt{\mu}.$$ Then the best rank $K$ solution $X_B$ is unique and equals the (also unique) global minimum to \eqref{eq:muregprob} as well as \eqref{eq:mupenalty}. Moreover \begin{equation}\label{gtf}\|X_B-X_0\|_F\leq 2\|\ep\|_2/\sqrt{1-\delta_{2K}^-}\end{equation} and $\emph{rank}(X)> K$ for any other stationary point $X$ of \eqref{eq:muregprob}.
\end{theorem}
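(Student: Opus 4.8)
The plan is to exploit the fact that the regularizer in \eqref{eq:muregprob} is the quadratic envelope of $\mu\,\rank(X)$, which by the theory of \cite{carlsson2018convex,carlsson2016convexification} converts \eqref{eq:muregprob} into a problem with the \emph{same} global minimizers as \eqref{eq:mupenalty} whenever $\|\A\|<1$. So I would aim to prove everything at the level of the envelope formulation and then transfer the conclusion to \eqref{eq:mupenalty} for free. The backbone of the argument is a comparison between two candidate matrices: the best rank $K$ solution $X_B$ and the ground truth $X_0$. First I would record the stationarity condition for \eqref{eq:muregprob}. Writing the objective as $Q_\mu(X)+\|\A X-b\|_2^2$ with $Q_\mu(X)=\sum_i r_\mu(\sigma_i(X))$, a stationary point satisfies an inclusion of the form $2\A^*(\A X-b)\in -\partial Q_\mu(X)$; the crucial analytic input is that the proximal/subdifferential structure of $r_\mu$ behaves like hard thresholding at level $\sqrt{\mu}$, so that at any stationary point every nonzero singular value direction is ``activated'' and the corresponding singular values are governed by a relation tying $\sigma_i(X)$ to the residual $\A^*(b-\A X)$.

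\textbf{Existence, rank bound and the error estimate.} I would first show that $X_B$ is a stationary point and that it has rank exactly $K$ under the stated size assumption on $\sigma_K(X_0)$. The idea is to compare the objective value at $X_B$ with the value at $X_0$: since $\rank(X_0)=K$ we have $Q_\mu(X_0)\le \mu K$, and using the LRIP lower bound $(1-\delta_{2K}^-)\|X_B-X_0\|_F^2\le \|\A(X_B-X_0)\|_2^2$ (valid because $\rank(X_B-X_0)\le 2K$) together with the optimality of $X_B$ in its rank class and the noise bound on $\|\ep\|_2$, I would derive \eqref{gtf} by a triangle-inequality manipulation of the form $\|\A X_B - b\|_2\le \|\A X_0-b\|_2=\|\ep\|_2$, feeding this into the LRIP inequality. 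The large-$\sigma_K(X_0)$ hypothesis forces the $K$ leading singular values of $X_B$ to stay well above the threshold $\sqrt{\mu}$, so that $X_B$ genuinely has rank $K$ and is not collapsed by the thresholding.

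\textbf{Uniqueness and global optimality.} The heart of the theorem is showing there is no competing stationary point of rank $\le K$ and that $X_B$ beats every higher-rank stationary point in objective value, giving both uniqueness of $X_B$ and global optimality. For uniqueness among low-rank points, I would suppose $X'$ is another stationary point with $\rank(X')\le K$; then $\rank(X_B-X')\le 2K$, and I would combine the two stationarity relations with the LRIP constant $\delta_{2K}^-$ to show the residuals force $X_B=X'$, the slack coming precisely from the gap between $\delta_{2K}^-$ and $1$ and from the separation of $\sigma_K(X_0)$ from $\sqrt{\mu}$. To rule out a lower-objective stationary point of rank $>K$, I would use that each extra nonzero singular value adds roughly $\mu$ to $Q_\mu$ (since $r_\mu(\sigma)\to\mu$ as $\sigma$ grows), while the achievable reduction in data fit is controlled by the noise level $\|\ep\|_2$; the hypothesis $\|\ep\|_2\le (1-\delta_{2K}^-)^{3/2}\sqrt{\mu}/3$ is exactly the budget ensuring the rank penalty dominates any data-fit gain, so no rank-$(K{+}1)$ point can be optimal.

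\textbf{Main obstacle.} The step I expect to be hardest is passing from the scalar intuition about thresholded singular values to rigorous matrix statements, because for a general operator $\A$ the singular vectors of a stationary point need not align with those of $X_0$ or of the residual, so the problem does \emph{not} reduce to the singular values alone as it did in the prox example \eqref{eq:murankapprox}. Controlling the cross terms $\langle \A(X_B-X_0),\A(X_B-X_0)\rangle$ and the subdifferential inclusion simultaneously — and verifying that the precise constants $\tfrac{1}{1-\delta_{2K}^-}+(1-\delta_{2K}^-)$ and $(1-\delta_{2K}^-)^{3/2}/3$ emerge rather than merely some constants — is where the delicate estimation lies, and I would expect to need an auxiliary lemma bounding $\|X-X_0\|_F$ for an arbitrary stationary point in terms of $\|\ep\|_2$ and $\delta_{2K}^-$ before assembling the final comparison.
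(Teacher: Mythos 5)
Your overall strategy coincides with the paper's actual proof (Theorem \ref{glass}): invoke Theorem \ref{celo1} to pass between \eqref{eq:mupenalty} and \eqref{eq:muregprob}, compare objective values against $X_0$ to force every global minimizer to have rank exactly $K$ (rank $>K$ costs at least $\mu>\|\ep\|^2$; rank $<K$ is excluded because dropping a singular value costs at least $\sigma_K(X_0)^2$ through the LRIP bound), and derive \eqref{gtf} from the triangle inequality together with $(1-\delta_{2K}^-)\|X_B-X_0\|^2\le\|\A(X_B-X_0)\|^2$, valid since $\mathrm{rank}(X_B-X_0)\le 2K$. That portion of your sketch is sound and essentially identical to the paper's argument.

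The genuine gap is in your uniqueness step --- precisely the point you flag as the ``main obstacle'' but then leave unresolved. ``Combining the two stationarity relations with the LRIP constant'' only yields the one-sided bound of Proposition \ref{thm:statpoint}, namely $\re\scal{Z''-Z',X''-X'}\le\delta_{2K}^-\|X''-X'\|^2$, an inequality perfectly consistent with the coexistence of several low-rank stationary points; it cannot ``force $X_B=X'$'' on its own. What is needed is the strict reverse inequality $\re\scal{W-Z',Y-X'}>\delta_{2K}^-\|Y-X'\|^2$ for \emph{every} $Y$ of rank at most $2K$ and every $W\in\partial\G(Y)$ (Proposition \ref{thm:statpoint1}); without it, it is not even clear that $X_B$, a priori only a constrained minimizer, is a stationary point of the unconstrained functional. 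The paper obtains this inequality through two ingredients absent from your proposal: (i) a scalar subdifferential lemma (Lemma \ref{corbor}, imported from \cite{carlsson2020unbiased}), which applies only when the singular values of $Z'=(I-\A^*\A)X'+\A^*b$ avoid the interval $\left[(1-\delta_{2K}^-)\sqrt{\mu},\,\sqrt{\mu}/(1-\delta_{2K}^-)\right]$, and (ii) the lifting result Proposition \ref{propQuotg}, which reduces the matrix infimum to the vector one via von Neumann's trace inequality and an extreme-point argument over the convex set of complex doubly substochastic matrices $U\odot\bar V$ --- this is exactly how the singular-vector misalignment you identified gets resolved, and your sketch offers no substitute mechanism. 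Relatedly, you never verify the interval condition \eqref{e41}: the lower half ($\sigma_j(Z')>\sqrt{\mu}/(1-\delta_{2K}^-)$ for $j\le K$) follows from the $\sigma_K(X_0)$ hypothesis combined with \eqref{gtf}, but the upper half requires the tail estimate $\sigma_j(Z')\le\|Z'-X'\|\le\|X'-X_0\|+\|\ep\|\le 3\|\ep\|/\sqrt{1-\delta_{2K}^-}<(1-\delta_{2K}^-)\sqrt{\mu}$ for $j>K$, and this is where the hypothesis $\|\ep\|\le(1-\delta_{2K}^-)^{3/2}\sqrt{\mu}/3$ is actually consumed. Your ``budget'' reading of that hypothesis (rank penalty dominating data-fit gain) uses only the far weaker condition $\mu>\|\ep\|^2$ and would at best exclude rank-$>K$ \emph{global minimizers}, not the rank-$>K$ \emph{stationary points} asserted in the theorem.
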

We note that one can find stationary points of \eqref{eq:muregprob} using algorithms such as Forward-Backward Splitting (FBS), which under mild conditions is proven to converge to a stationary point, see Section \ref{sec num} for details. We also provide similar theorems on the uniqueness of low rank stationary points, see e.g. Theorem \ref{thm:statpoint:matrix}.

Note that the assumptions of the theorem are very natural; If the noise is too large or if the $K$:th singular value of $X_0$ is very small, there is clearly no chance of recovering $X_0$. Moreover the chance of recovering $X_0$ clearly relies on an appropriate choice of $\mu$, although the method is forgiving as long as $\mu$ is in the appropriate range, see Figure \ref{fig:nncomp}.

We have found no result in the literature which is as strong as this one. The results concerning nuclear norm minimization \eqref{eq:nuclearobj} also have estimates of the form \eqref{gtf}, but are suboptimal due to the shrinking bias and moreover usually include stronger assumptions, such as $\delta_{4K}$ being small. Another strength of the above result is that the constant in the estimate \eqref{gtf} grows slowly with $\delta_{2K}^-$; for example the value $\delta_{2K}^-=3/4$ gives the constant $4$, whereas similar estimates found in the literature usually apply only for much smaller values. For example, the key result of the celebrated paper \cite{recht-etal-siam-2010} applies when $\delta_{5K}<1/10$. 

Recent contributions concerning non-convex bilinear parametrization in the case when the model order $K $ is known, such as \cite{ge2017no,zhu2018global}, guarantee perfect recovery \textit{in the case of no noise.} To our knowledge, this is the first paper which gives conditions under which the best rank $K$ solution $X_B$ is a point of convergence for a low rank recovery method in the presence of noise.

In fact, in the noise free case we can prove a simpler result as follows
\begin{theorem}\label{thmnonoise}
Suppose that $b=\A X_0$ where $\emph{rank} (X_0)=K.$ Assume that $0<\delta_{2K}^-<1$ and that
$$\sigma_K(X_0)>\frac{\sqrt{\mu}}{1-\delta_{2K}^-}.$$ Then $X_0$ is a stationary point of \eqref{eq:muregprob} which is a unique rank $K$ minimizer, i.e. it solves $$\{X_0\}=\argmin_{\emph{rank}(X)\leq K} \sum_i r_\mu(\sigma_i(X)) + \|\A X-b\|_2^2.
$$ Moreover any other stationary point must have a higher rank.
\end{theorem}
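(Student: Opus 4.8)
The plan is to exploit the fact that in the noise-free case the gradient of the data term $D(X)=\|\A X-b\|_2^2$ vanishes at the ground truth. Since $b=\A X_0$ we have $\nabla D(X)=2\A^*(\A X-b)=2\A^*\A(X-X_0)$, so $\nabla D(X_0)=0$ and moreover $\|\A X-b\|_2=\|\A(X-X_0)\|_2$ for every $X$. Writing $g(X)=\sum_i r_\mu(\sigma_i(X))$ and using $\mu-r_\mu(\sigma)=\max(\sqrt\mu-\sigma,0)^2$, the hypothesis $\sigma_K(X_0)>\sqrt\mu/(1-\delta_{2K}^-)>\sqrt\mu$ places all singular values of $X_0$ in the flat region of $r_\mu$, so $g(X_0)=K\mu$ and $f(X_0)=K\mu$. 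To verify that $X_0$ is a stationary point, note that because $\nabla D(X_0)=0$ stationarity reduces to $0\in\partial g(X_0)$, equivalently $X_0$ being a fixed point of the proximal map of $g$. Since every nonzero singular value of $X_0$ lies strictly above $\sqrt\mu$, the point $X_0$ sits where $g$ is locally constant along rank-preserving perturbations and strictly increasing along rank-increasing ones, hence is a local minimizer of $g$ and $0\in\partial g(X_0)$.

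The technical core is showing that $X_0$ is the unique minimizer of $f$ over $R_K$. For $X\in R_K$ with $X\neq X_0$ I would write $f(X)-f(X_0)=\|\A(X-X_0)\|_2^2-\sum_{i=1}^K\max(\sqrt\mu-\sigma_i(X),0)^2$ and control the subtracted sum by a pointwise estimate: the map $s\mapsto(\sqrt\mu-s)/(\sigma_i(X_0)-s)$ is decreasing on $[0,\sqrt\mu]$ with maximum $\sqrt\mu/\sigma_i(X_0)<1-\delta_{2K}^-$, giving $\max(\sqrt\mu-\sigma_i(X),0)\le(1-\delta_{2K}^-)|\sigma_i(X_0)-\sigma_i(X)|$. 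Summing and applying the Hoffman--Wielandt/Mirsky inequality $\sum_i(\sigma_i(X_0)-\sigma_i(X))^2\le\|X-X_0\|_F^2$ yields $\sum_{i=1}^K\max(\sqrt\mu-\sigma_i(X),0)^2\le(1-\delta_{2K}^-)^2\|X-X_0\|_F^2$. Since $X-X_0$ has rank at most $2K$, LRIP gives $\|\A(X-X_0)\|_2^2\ge(1-\delta_{2K}^-)\|X-X_0\|_F^2$, and because $(1-\delta_{2K}^-)^2<(1-\delta_{2K}^-)$ with $\|X-X_0\|_F>0$ we obtain $f(X)-f(X_0)>0$. This establishes existence and uniqueness of the rank-$K$ minimizer.

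The main obstacle is the final claim that no \emph{other} stationary point has rank $\le K$. Here the vanishing of $\nabla D$ at $X_0$ no longer helps, and a crude use of convexity is insufficient: writing $g=h-\|\cdot\|_F^2$ with $h$ convex, the stationarity condition $2X^*-\nabla D(X^*)\in\partial h(X^*)$ together with the subgradient inequality only yields $f(X^*)-f(X_0)\le\delta_{2K}^-\|X^*-X_0\|_F^2$, which is compatible with the lower bound coming from the previous step and produces no contradiction. I would instead exploit the refined (range versus complement) structure of the spectral subdifferential of $g$: on the singular directions of $X^*$ the equalities in the stationarity condition force orthogonality relations of the form $\langle\A(u_iv_i^T),\A(X^*-X_0)\rangle=0$ when $\sigma_i(X^*)\ge\sqrt\mu$, while the complementary directions obey an operator-norm bound $\le 2\sqrt\mu$. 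The goal is then to combine these relations with LRIP of order $2K$ to deduce $\A(X^*-X_0)=0$, whence $X^*=X_0$ because $X^*-X_0$ has rank $\le 2K$ and $\delta_{2K}^-<1$. The delicate points are proving that a low-rank stationary point forces every nonzero singular value above $\sqrt\mu$ (so the flat-region equalities apply) and then closing the LRIP argument; this is exactly where a structural characterization of the stationary points, as in Theorem~\ref{thm:statpoint:matrix}, does the heavy lifting.
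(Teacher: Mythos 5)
Your first two paragraphs are correct, and the second is a genuinely different route from the paper's. The paper obtains the unique-rank-$K$-minimizer claim as a corollary of Theorem \ref{thm:statpoint:matrix}, hence ultimately through Propositions \ref{thm:statpoint} and \ref{thm:statpoint1}, the lifting machinery of Proposition \ref{propQuotg}, and the vector lemmas imported as Lemma \ref{corbor}. Your direct estimate bypasses all of this: writing $f(X)-f(X_0)=\|\A(X-X_0)\|^2-\sum_{i=1}^K\max(\sqrt{\mu}-\sigma_i(X),0)^2$ for $X\in R_K$, bounding the subtracted sum by $(1-\delta_{2K}^-)^2\|X-X_0\|_F^2$ via the monotone quotient $s\mapsto(\sqrt{\mu}-s)/(\sigma_i(X_0)-s)$ and Mirsky's inequality, and then invoking LRIP of order $2K$, gives $f(X)-f(X_0)\geq \delta_{2K}^-(1-\delta_{2K}^-)\|X-X_0\|_F^2>0$, correctly using the hypothesis $0<\delta_{2K}^-<1$ at exactly the right spot. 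This is more elementary and self-contained, and yields a transparent quantitative gap. What it cannot do is rule out other low-rank \emph{stationary} points: a stationary point of rank $\leq K$ need not be a minimizer over $R_K$, so uniqueness of the constrained minimizer does not imply the theorem's final claim, as you rightly observe.

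That final claim is where your proposal has a genuine gap. Your sketched first-order analysis does not close: the premise that a rank-$\leq K$ stationary point has all nonzero singular values above $\sqrt{\mu}$ is false in general, since stationarity forces $X''$ to be a hard thresholding of $Z''$ at $\sqrt{\mu}$ only when $\sigma_j(Z'')\neq\sqrt{\mu}$; if $\sigma_j(Z'')=\sqrt{\mu}$ then $\sigma_j(X'')$ may be any value in $[0,\sqrt{\mu}]$, and your flat-region orthogonality relations are unavailable. Moreover, even granting them, the step ``combine with LRIP to deduce $\A(X''-X_0)=0$'' is unjustified, and indeed the paper never proves such an identity; it instead derives a contradiction from the scalar inequality of Lemma \ref{corbor} lifted via Proposition \ref{propQuotg}. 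The fix is short and is exactly the paper's proof: in the noise-free case $Z_0=(I-\A^*\A)X_0+\A^*\A X_0=X_0$, so $\sigma_i(Z_0)=\sigma_i(X_0)>\sqrt{\mu}/(1-\delta_{2K}^-)$ for $i\leq K$ while $\sigma_i(Z_0)=0<(1-\delta_{2K}^-)\sqrt{\mu}$ for $i>K$; hence the interval condition \eqref{e41} holds, and Theorem \ref{thm:statpoint:matrix} applied to the stationary point $X_0$ yields simultaneously the uniqueness of the rank-constrained minimizer of \eqref{eq:muregprob} and that every other stationary point has rank exceeding $K$. You correctly gesture at Theorem \ref{thm:statpoint:matrix} as doing the heavy lifting, but you never verify its hypothesis; computing $Z_0=X_0$ and checking \eqref{e41} is the missing one-line bridge, and once it is supplied your second paragraph becomes an independent (and nice) alternative proof of two of the three assertions rather than a needed ingredient.
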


In other words, the theorem says that there are no spurious rank $K$ local minima of our functional. This should be compared with recent influential contributions such as \cite{ge2017no} and \cite{zhu2018global}. Both papers display very promising informal versions of their results in the introduction, but a closer reading reveals that they only apply in the noise free setting. For example, Section 3.1 of \cite{ge2017no} considers precisely the situation discussed here with $\epsilon=0$, and concludes with a theorem guaranteeing that the method has $X_0$ as a stationary point if the RIP-constant $\delta_{2K}$ is less than $1/20$, which is hard to satisfy in practice. In contrast the above theorem applies if $\A$ satisfies
$$\sigma_K(X_0)>\frac{\sqrt{\mu}}{1-\delta_{2K}},$$ since $\delta_{2K}^-\leq \delta_{2K}$, as noted earlier. Similarly, Section III.C.1 of \cite{zhu2018global} considers minimization of $\|\A(X-X_0)\|_2$ and give recovery guarantees based on assuming $\delta_{4K}<1/5$. The main result of that paper does not seem to cover the noisy case, i.e.~when \(b=\A X_0 + \epsilon \) for $\epsilon\neq 0$, since it is assumed that the global minimum of the functional to be minimized already has low rank, which is very unlikely to hold when indeed $\epsilon\neq 0$.

Theorem \ref{thmnonoise} is a corollary of the results in Section \ref{sec:uniqnesspenalty}, and is simpler to prove than Theorem \ref{glassintro}, which is proved in Section \ref{noisy1}. However, the main contribution of this paper is more general than an analysis of the particular functional \eqref{eq:muregprob}. Suppose that $f$ is any sparsity inducing functional on $\R^n$, and that we form $F$ on the matrix space $\M_{n_1,n_2}$ by setting \begin{equation}\label{type}F(X)=f(\sigma(X)),\end{equation} where $n=\min\{n_1,n_2\}$.
A key theoretical contribution is Section \ref{sec:vectomatrix} which gives results on how to lift results concerning sparse vector estimation using $f$ to analogous results for low rank matrix estimation using $F$. Once this machinery is in place, theorems as those above follows ``easily'' by applying the methods developed in \cite{carlsson2020unbiased}.

To underline this point, we also investigate the concrete choice where $F$ is the quadratic envelope (see Section \ref{regS}) of the indicator functional $\iota_{R_K}$ used in \eqref{eq:fixedrank}. This functional is relevant when the sought rank $K$ is known a priori. In this case we prove a theorem similar to Theorem \ref{glassintro} (see Theorem \ref{t7}), but which is stronger in the sense that we do no need to find a suitable value of some parameter, such as $\mu$.

The paper is organized as follows, in Section \ref{regS} we briefly recall properties of the quadratic envelope, used to regularize discontinuous penalties such as $\mu\text{rank}(X)$ and $\iota_{R_K}$. In Section \ref{sec:unique} we recall some general observations about uniqueness of sparse stationary points. The paper really gets going in Section \ref{sec:vectomatrix} which provides the tools to lift results about vectors to matrices for penalties of the form \eqref{type}. We then consider the concrete cases of $F(X)=\mu\text{rank}(X)$ in Section \ref{secrank} and $F(X)=\iota_{R_K}(X)$ in Section \ref{secrankf}. In Section \ref{sec num} we discuss algorithms, primarily FBS and ADMM, and we conclude with some numerical examples indicating that our proposed estimator is unbiased, as the title claims.

\section{Relaxation via the Quadratic Envelope}\label{regS}
For easier reading we drop sub-indices on the norms so that e.g.~$\|\A X-b\|$, $\|X\|$ and $\|\A\|$ denotes $\ell^2$-, Frobenius- and operator-norms respectively.

Let $f$ be any lower semi-continuous (l.s.c.)~$[0,\infty]-$valued functional on $\M_{n_1,n_2}$ or more generally any scalar product vector space $\V$.
The quadratic envelope $\Q_\gamma(f)$, where $\gamma>0$ is a parameter, is designed such that $\Q_{\gamma}(f)(X)+\frac{\gamma}{2}\|X\|^2$ is the l.s.c. convex envelope of $f(X)+\frac{\gamma}{2}\|X\|^2$. A more direct way to introduce it is via the formula $\Q_\gamma=\S_\gamma\circ \S_\gamma$ where $\S_{\gamma}(f)(Y)=\sup_X - f(X)-\frac{\gamma}{2}\|{X-Y}\|^2$, we refer to  \cite{carlsson2018convex} for more details and further properties.
That paper also explores its potential use for relaxing problems of the form $f(X)+\frac{1}{2}\|\A X-b\|^2$ by replacing these with
\begin{equation}
\Q_\gamma(f)(X)+\frac{1}{2}\|\A X-b\|^2,
\label{rep}
\end{equation}
demonstrating several favorable properties. In this paper we will fix $\gamma=2$ and remove the traditional factor $\frac{1}{2}$ in front of the $\ell^2$-term, since it simplifies formulas. This is not a limitation since one can always obtain such a problem by rescaling $f,~ \A$ and $b$. Indeed, some simple computations easily yield $\Q_\gamma(f)=\frac{\gamma}{2}\Q_2(\frac{2}{\gamma}f)$ so that \eqref{rep} is equivalent to $$\Q_2\left(\frac{2}{\gamma}f \right)(x)+\left\|\frac{\A x}{\sqrt{\gamma}}-\frac{b}{\sqrt{\gamma}}\right\|^2.$$

We henceforth assume that such a rescaling has been done so that we are interested in minimizing
\begin{equation}\label{relax}\mathcal{R}_{reg}(X)=\Q_2(f)(X)+\left\|\A X-b\right\|^2\end{equation} instead of $\mathcal{R}(X)=f(X)+\|\A X-b\|^2$.
Reformulated to this setting, the main result of \cite{carlsson2018convex} reads as follows:

\begin{theorem}\label{celo1}
Let $\|\A\|<1$. If $x_l$ is a local minimizer (resp.~strict local minimizer) of $\mathcal{R}_{reg}$, then it is also a local minimizer (resp.~strict local minimizer) of $\mathcal{R}$, and $\mathcal{R}(x_l)=\mathcal{R}_{reg}(x_l)$. In particular, the sets of global minimizers of $\mathcal{R}$ and $\mathcal{R}_{reg}$ coincides.
\end{theorem}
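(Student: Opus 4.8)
The plan is to reduce everything to two facts about the quadratic envelope together with the strict concavity coming from the hypothesis $\|\A\|<1$. Write $h=f+\|\cdot\|^2$, so that, by the very definition of $\Q_2$, the function $h^{**}:=\Q_2(f)+\|\cdot\|^2$ is the l.s.c.\ convex envelope of $h$. Since a convex envelope never exceeds the original function, $\Q_2(f)\le f$, and hence $\mathcal{R}_{reg}\le\mathcal{R}$ pointwise. The decisive observation is that
\[
\mathcal{R}_{reg}(X)=h^{**}(X)+q(X),\qquad q(X):=\|\A X-b\|^2-\|X\|^2,
\]
where $q$ is a quadratic with Hessian $2(\A^*\A-I)$; because $\|\A\|<1$ this Hessian is strictly negative definite, so $q$ is strictly concave. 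Thus $\mathcal{R}_{reg}$ is a sum of a convex function and a strictly concave quadratic.

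First I would prove the key lemma: at every local minimizer $x_l$ of $\mathcal{R}_{reg}$ the envelope is tight, i.e.\ $\Q_2(f)(x_l)=f(x_l)$, equivalently $h^{**}(x_l)=h(x_l)$. Suppose instead $h^{**}(x_l)<h(x_l)$. By the Carathéodory description of the convex envelope there are points $x_0,\dots,x_d$ and weights $\lambda_i>0$ summing to $1$ with $x_l=\sum_i\lambda_i x_i$ and $h^{**}(x_l)=\sum_i\lambda_i h(x_i)$; a supporting affine minorant $\ell$ touching $h^{**}$ at $x_l$ forces $h^{**}(x_i)=h(x_i)=\ell(x_i)$ for each $i$. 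Since $h^{**}\ge\ell$ everywhere while $h^{**}(\sum_i\mu_i x_i)\le\sum_i\mu_i\ell(x_i)=\ell(\sum_i\mu_i x_i)$ on the simplex $C=\mathrm{conv}\{x_i\}$, the envelope $h^{**}$ coincides with the affine function $\ell$ on all of $C$. The assumption $h^{**}(x_l)<h(x_l)$ makes the $x_i$ not all equal, so $\dim C\ge1$ and $x_l$ lies in the relative interior of $C$. On $C$ we then have $\mathcal{R}_{reg}=\ell+q$, which is strictly concave; a strictly concave function admits no local minimum in the relative interior of a set of positive dimension, contradicting that $x_l$ locally minimizes $\mathcal{R}_{reg}$. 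Hence $h^{**}(x_l)=h(x_l)$ and $\mathcal{R}_{reg}(x_l)=\mathcal{R}(x_l)$.

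The local and strict-local statements then follow by sandwiching: for $X$ near $x_l$,
\[
\mathcal{R}(X)\ge\mathcal{R}_{reg}(X)\ge\mathcal{R}_{reg}(x_l)=\mathcal{R}(x_l),
\]
and if the middle inequality is strict for $X\neq x_l$ then so is the outer one, giving the strict case. For the global statement I would first establish $\inf\mathcal{R}_{reg}=\inf\mathcal{R}$. One inequality is immediate from $\mathcal{R}_{reg}\le\mathcal{R}$; for the reverse, fix $X$ and $\epsilon>0$, choose a representation $X=\sum_i\lambda_i x_i$ with $\sum_i\lambda_i h(x_i)\le h^{**}(X)+\epsilon$, and use the (plain) concavity of $q$ to get
\[
\mathcal{R}_{reg}(X)=h^{**}(X)+q(X)\ge\sum_i\lambda_i h(x_i)-\epsilon+\sum_i\lambda_i q(x_i)=\sum_i\lambda_i\mathcal{R}(x_i)-\epsilon\ge\inf\mathcal{R}-\epsilon,
\]
so letting $\epsilon\to0$ yields $\inf\mathcal{R}_{reg}\ge\inf\mathcal{R}$, hence equality to a common value $m$. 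A global minimizer of $\mathcal{R}_{reg}$ is in particular a local one, so the lemma gives $\mathcal{R}(x)=\mathcal{R}_{reg}(x)=m$ and $x$ globally minimizes $\mathcal{R}$; conversely any global minimizer $x$ of $\mathcal{R}$ satisfies $m=\mathcal{R}(x)\ge\mathcal{R}_{reg}(x)\ge m$, so $x$ globally minimizes $\mathcal{R}_{reg}$.

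I expect the main obstacle to be the key lemma, and within it the geometric claim that $h^{**}$ is affine on the cell $C$ spanned by an optimal Carathéodory representation; once this flatness is in hand, the strict concavity of $q$ — the only place where $\|\A\|<1$ is genuinely used — finishes the argument. Care is also needed to guarantee that an optimal (exact) Carathéodory representation exists: this follows from the lower semicontinuity and coercivity of $h=f+\|\cdot\|^2$ (note $h\ge\|\cdot\|^2$ since $f\ge0$), which simultaneously ensure that the l.s.c.\ convex envelope coincides with the biconjugate $h^{**}$ used throughout.
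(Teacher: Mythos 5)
Your proposal is correct, and it takes a genuinely different route from the paper: the paper does not prove Theorem~\ref{celo1} at all, but imports it from \cite{carlsson2018convex}, where the argument is run through the transform machinery $\Q_2=\S_2\circ\S_2$ (essentially Moreau-envelope/proximal-hull computations for the quadratic envelope). Your decomposition $\mathcal{R}_{reg}=h^{**}+q$ with $h=f+\|\cdot\|^2$ and $q(X)=\|\A X-b\|^2-\|X\|^2$, together with the key lemma that the envelope is tight at every local minimizer of $\mathcal{R}_{reg}$, gives a self-contained and more elementary proof, and it makes transparent exactly where the hypothesis enters: $\|\A\|<1$ is used only for the strict concavity of $q$ in the key lemma, while plain concavity (i.e.\ $\|\A\|\leq 1$) already yields $\inf\mathcal{R}_{reg}=\inf\mathcal{R}$ via your approximate-representation estimate. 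All the individual steps check out: the exact Carath\'eodory representation forces $h(x_i)=h^{**}(x_i)=\ell(x_i)$, hence $h^{**}=\ell$ on $C=\mathrm{conv}\{x_i\}$; all-positive weights place $x_l$ in the relative interior of $C$; and a strictly concave function indeed has no local minimizer at a relative interior point of a cell of positive dimension. Two small points deserve patching. First, the existence of the supporting affine minorant $\ell$ requires $\partial h^{**}(x_l)\neq\emptyset$, which can fail at relative boundary points of $\mathrm{dom}\, h^{**}$ in general; here it is automatic, since Fermat's rule and the convex-plus-smooth sum rule at the local minimizer give $-\nabla q(x_l)\in\partial h^{**}(x_l)$ --- this is precisely the relation $Z\in\partial\G(X)$ via \eqref{y} that the paper exploits in Section~\ref{sec:unique}, so you should say so explicitly. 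Second, the attainment of the Carath\'eodory representation, which you rightly flag, does follow from $h\geq\|\cdot\|^2$ (1-coercivity) and lower semicontinuity; these also guarantee that the convex envelope is itself l.s.c.\ and coincides with the biconjugate, closing the loop in your identification of $h^{**}$. The one trade-off relative to the cited source is that the Carath\'eodory step confines your argument to finite dimensions, whereas the transform-based proof works in general Hilbert spaces; for the matrix spaces $\M_{n_1,n_2}$ used in this paper that costs nothing.
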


\section{Properties of stationary points under LRIP}\label{sec:unique}

The results presented here follow closely those of Section 3 in \cite{carlsson2020unbiased}, but are included for completeness. They will in later sections be useful for establishing uniqueness of stationary points.
We say that a point $X$ is stationary for a functional $g$ if $$\underset{Y\neq X}{\liminf_{Y\rightarrow X}}~ \frac{g(Y)-g(X)}{\|Y\|}\geq 0.$$
For the case when $g$ is a sum of a convex function $g_c$ and a differentiable function $g_d$, we have that $X$ is a stationary point if and only if $$-\nabla g_d(X)\in \partial g_c(X),$$ where $\nabla g_d$ denotes the standard gradient and $\partial g_c(X)$ denotes the subdifferential commonly used in convex analysis. Setting
\begin{equation}
\G(X)=\frac{1}{2}\Q_2(f)(X)+\frac{1}{2}\|X\|^2,
\label{gdef}
\end{equation}
(so that $2\G$ is the l.s.c. convex envelope of $f(X)+\|X\|^2$), we have that
\begin{equation}\label{bat}
\mathcal{R}_{reg}(x)=2\G(X)-\|X\|^2+\|\A X-b\|^2,
\end{equation}
which upon differentiation yields that $X$ is a stationary point of $\mathcal{R}_{reg}$ if and only if
\begin{equation*}(I-\A^* \A)X + \A^* b\in\partial \G(X).\end{equation*}
Given any $X$, we therefore associate with it another point $Z$ defined by
\begin{equation}\label{y}
Z=(I-\A^* \A)X + \A^* b.
\end{equation}
Summing up, we have that $X$ is a stationary point of $\mathcal{R}_{reg}$ if and only if $Z$, defined via \eqref{y}, satisfies $Z\in\partial \G(X)$.
Moreover, it is useful to note that a stationary point $X$ of $\mathcal{R}_{reg}$ solves
\begin{equation}\label{hy}
\min_{Y} \Q_{2}(f)(Y) + \|Y-Z\|^2,
\end{equation}
i.e. it minimizes the l.s.c. convex envelope of $f(Y)+\|Y-Z\|^2$, which was shown in Proposition 3.2 of \cite{carlsson2020unbiased}.

The following result on uniqueness of sparse stationary points of $\mathcal{R}_{reg}$ (defined by \eqref{relax} for any non-negative penalty $f$) is taken from Section 3 of \cite{carlsson2020unbiased}, and is included for completeness.

\begin{proposition}\label{thm:statpoint}
Let $X'$ and $X''$ be a stationary points of $\mathcal{R}_{reg}$ with $\emph{rank}(X')+\emph{rank}(X'')\leq 2K$. We then have that
\begin{equation}\label{gd1}
 \re \scal{Z''-Z',X''-X'}\leq  \delta_{2K}^-\|X''-X'\|^2,
\end{equation}
where $Z''$ and $Z'$ are defined via \eqref{y}.
\end{proposition}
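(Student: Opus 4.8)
The plan is to reduce the inequality \eqref{gd1} to the defining property of the lower restricted isometry constant $\delta_{2K}^-$, after which the statement becomes essentially a one-line computation. The first thing I would observe is that the stationarity of $X'$ and $X''$ is not actually needed for \eqref{gd1}: only the rank bound $\rank(X')+\rank(X'')\leq 2K$ enters. Indeed, since $Z'$ and $Z''$ are defined through the affine map \eqref{y}, subtracting the two relations cancels the term $\A^* b$ and yields the clean identity $Z''-Z'=(I-\A^*\A)(X''-X')$.

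Writing $\Delta=X''-X'$, I would next expand the left-hand side of \eqref{gd1}. Using linearity together with the defining property of the adjoint, $\scal{\A^*\A\,\Delta,\Delta}=\scal{\A\Delta,\A\Delta}=\|\A\Delta\|^2$, and $\scal{\Delta,\Delta}=\|\Delta\|^2$, one obtains $\scal{Z''-Z',\Delta}=\|\Delta\|^2-\|\A\Delta\|^2$. Both quantities on the right are real because $I-\A^*\A$ is self-adjoint; this is exactly why the real part in \eqref{gd1} is harmless and the identity holds verbatim even over $\C$, a small point I would record explicitly so that the argument is insensitive to the ground field.

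The final step is to invoke the LRIP bound. By subadditivity of rank, $\rank(\Delta)\leq \rank(X'')+\rank(X')\leq 2K$, so $\Delta$ is admissible in the infimum defining $1-\delta_{2K}^-$ at order $2K$, giving $\|\A\Delta\|^2\geq (1-\delta_{2K}^-)\|\Delta\|^2$ (trivially true also when $\Delta=0$). Substituting this into the identity above yields $\|\Delta\|^2-\|\A\Delta\|^2\leq \delta_{2K}^-\|\Delta\|^2$, which is precisely \eqref{gd1}.

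Because the argument is so short, there is no genuine obstacle to overcome; the only thing worth emphasizing is the conceptual observation that the stationarity hypothesis is not consumed here. Its role will instead be played by the inclusion $Z\in\partial\G(X)$ recorded earlier, which one combines with \eqref{gd1} in the subsequent sections to deduce uniqueness of low rank stationary points. Keeping \eqref{gd1} free of the stationarity assumption is in fact what makes it a flexible tool for those later arguments.
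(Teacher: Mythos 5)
Your proof is correct and follows essentially the same route as the paper: both derive the identity $Z''-Z'=(I-\A^*\A)(X''-X')$, take the real scalar product with $X''-X'$ to obtain $\|X''-X'\|^2-\|\A(X''-X')\|^2$, and bound this by $\delta_{2K}^-\|X''-X'\|^2$ using $\rank(X''-X')\leq 2K$ and the definition of the LRIP constant. Your side remarks---that stationarity is never consumed here and that self-adjointness of $I-\A^*\A$ makes the real part harmless---are accurate observations the paper leaves implicit, but they do not change the argument.
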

\begin{proof}
Note that $Z''-Z'=(I-\A^*\A)(X''-X')$. Taking a scalar product with $X''-X'$ and noting that $\text{rank}(X''-X')\leq 2K$ gives $$\re\langle Z''-Z',X''-X'\rangle=\|X''-X'\|^2-\|\A(X''-X')\|^2\leq (1-(1-\delta_{2K}^-))\|X''-X'\|^2,$$
as desired.\end{proof}

The above proposition is not very interesting in itself, but the point is the following: Suppose we have found one stationary point $X'$ with $\emph{rank}(X')\leq K$, and suppose that we can show that the reverse inequality to \eqref{gd1} holds for all $X''$ with $\rank(X'')\leq K$ and $Z''\in \partial\mathcal{G}(X'')$. Then it follows by contradiction that $X'$ is a unique stationary point with rank less than or equal to $K$. 

An even stronger result would be to say that that $X'$ is a unique solution to \begin{equation}\label{k}\argmin_{\emph{rank}(X)\leq K}\mathcal{R}_{reg}(X),\end{equation} but this does not follow from the above since the minimizers under the extra condition $\emph{rank}(X)\leq K$ are not necessarily stationary points. Hence it is not even clear if $X'$ is a solution. The following proposition fills this gap upon assuming a bit more.

\begin{proposition}\label{thm:statpoint1}
Let $X'$ be a stationary point of $\mathcal{R}_{reg}$ with $\emph{rank}(X')\leq K$. Suppose that \begin{equation}
 \re \scal{W-Z',Y-X'}>  \delta_{2K}^-\|Y-X'\|^2,\label{ineq}
\end{equation}
 holds for all $Y$ with $\emph{rank}(Y)\leq 2K$ and $W\in\partial \G(Y) $, then $X'$ is the solution to \eqref{k} and as such unique.
\end{proposition}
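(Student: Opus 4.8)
The plan is to prove the statement directly, by showing that $\mathcal{R}_{reg}(Y)>\mathcal{R}_{reg}(X')$ for every $Y\neq X'$ with $\rank(Y)\leq K$. Since this compares $X'$ against all competitors simultaneously, it establishes at once that $X'$ solves \eqref{k} and that it is the unique solution, thereby circumventing the difficulty flagged before the statement that a constrained minimizer need not be a stationary point. The first step is to record an exact identity. Writing $\Delta=Y-X'$ and using \eqref{bat} together with $Z'=(I-\A^*\A)X'+\A^*b$ from \eqref{y}, a direct expansion of the squares gives
\[
\mathcal{R}_{reg}(Y)-\mathcal{R}_{reg}(X')=2\big(\G(Y)-\G(X')-\re\scal{Z',\Delta}\big)+\|\A\Delta\|^2-\|\Delta\|^2 .
\]
Because $2\G$ is the l.s.c.\ convex envelope of $f(\cdot)+\|\cdot\|^2$ and $X'$ is stationary, we have $Z'\in\partial\G(X')$, so the bracketed term is a Bregman-type quantity, automatically nonnegative. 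The naive estimate $\|\A\Delta\|^2-\|\Delta\|^2\geq-\delta_{2K}^-\|\Delta\|^2$ coming from the LRIP is not enough on its own; the whole point is to extract a matching quadratic lower bound $\tfrac{\delta_{2K}^-}{2}\|\Delta\|^2$ for the Bregman term.

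The key observation making this possible is geometric: since $X'+t\Delta=(1-t)X'+tY$ is a convex combination of two matrices of rank at most $K$, subadditivity of the rank gives $\rank(X'+t\Delta)\leq 2K$ for every $t\in[0,1]$. Hence the hypothesis \eqref{ineq} may be applied not merely at $Y$, but at every point of the segment joining $X'$ to $Y$. Applying \eqref{ineq} to the pair $(X',\,X'+t\Delta)$ and dividing by $t>0$ yields, for every $W_t\in\partial\G(X'+t\Delta)$,
\[
\re\scal{W_t-Z',\Delta}>\delta_{2K}^-\, t\,\|\Delta\|^2 .
\]
Now set $\phi(t)=\G(X'+t\Delta)$, a convex function on $[0,1]$ (we may assume $\mathcal{R}_{reg}(Y)<\infty$, so $\G$ is finite at both endpoints and hence, by convexity, on the whole segment) whose one-sided derivatives are obtained by evaluating $\re\scal{W_t,\Delta}$ over the subgradients $W_t\in\partial\G(X'+t\Delta)$. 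The displayed inequality therefore bounds these derivatives below by $\re\scal{Z',\Delta}+\delta_{2K}^-\,t\,\|\Delta\|^2$, and integrating over $[0,1]$ via $\phi(1)-\phi(0)=\int_0^1\phi'(t)\,dt$ gives
\[
\G(Y)-\G(X')-\re\scal{Z',\Delta}>\int_0^1 \delta_{2K}^-\,t\,\|\Delta\|^2\,dt=\frac{\delta_{2K}^-}{2}\|\Delta\|^2 .
\]

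Substituting this into the identity and using $\|\A\Delta\|^2\geq(1-\delta_{2K}^-)\|\Delta\|^2$ (valid since $\rank(\Delta)\leq 2K$) yields
\[
\mathcal{R}_{reg}(Y)-\mathcal{R}_{reg}(X')>\delta_{2K}^-\|\Delta\|^2+\|\A\Delta\|^2-\|\Delta\|^2\geq 0
\]
for every $Y\neq X'$ of rank at most $K$, which is the desired conclusion. The step I expect to be the main obstacle is precisely the quadratic lower bound on the Bregman term: a strong-monotonicity estimate between the single pair $X'$ and $Y$ does not by itself force quadratic growth of $\G$, so the argument genuinely relies on \eqref{ineq} holding uniformly along the entire segment — which is exactly why the hypothesis is imposed for all $Y$ of rank up to $2K$ rather than only at stationary points of rank $K$ as in Proposition~\ref{thm:statpoint}. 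The remaining care is technical: justifying $\phi(1)-\phi(0)=\int_0^1\phi'(t)\,dt$ for the convex function $\phi$ (differentiable almost everywhere, with $\phi'(t)=\re\scal{W_t,\Delta}$ at points of differentiability), and checking that the strict inequality survives integration, which it does because the integrand is strictly larger than $\delta_{2K}^-\,t\,\|\Delta\|^2$ on all of $(0,1]$.
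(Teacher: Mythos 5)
Your proof is correct, and it is essentially the paper's own argument in integrated form: the paper likewise exploits that every point $X'+t\Delta$ on the segment joining $X'$ to a rank-$\le K$ competitor has rank at most $2K$, applies \eqref{ineq} there, and deduces that the directional derivative of $\mathcal{R}_{reg}$ along the segment is strictly positive --- which is precisely the pointwise bound you integrate to obtain your Bregman estimate $\G(Y)-\G(X')-\re\scal{Z',\Delta}>\tfrac{\delta_{2K}^-}{2}\|\Delta\|^2$. The only difference is presentational (the paper concludes from ``positive right derivative of $t\mapsto\mathcal{R}_{reg}(X'+t\Delta)$ at every $t>0$'' rather than from your exact identity plus the fundamental theorem of calculus for the convex function $\phi$), and the technical caveats you flag, such as nonemptiness of $\partial\G$ along the segment, are shared by the paper's version and are harmless for the concrete penalties considered there, whose associated $\G$ is finite everywhere.
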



\begin{proof}
Let $X''\neq X'$ be a solution to \eqref{k}, and let $Y$ be any point on the line between $X'$ and $X''$. It follows that $\rank(Y)\leq 2K$.
By \eqref{bat} we have that $\mathcal{R}_{reg}(Y)$ is a combination of the convex term $2\mathcal{G}(Y)$ and a smooth term $-\|Y\|^2+\|\mathcal{A}Y-b\|^2$, hence its directional derivative in any given direction $V$ exists and equals $$(\mathcal{R}_{reg})'_V(Y)=2\re \big(\sup_{W\in \partial\mathcal{G}(Y)}\langle W-Y,V\rangle+\langle\A^*(\A Y-b),V\rangle\big).$$  By \eqref{y} we have $$0=\langle Z'-X',V\rangle+\langle\A^*(\A X'-b),V\rangle$$
which, upon subtracting in the previous equation, gives $$(\mathcal{R}_{reg})'_V(Y)\geq 2\re \big(\langle W-Z',V\rangle-\langle Y-X',V\rangle+\langle\A^*\A (Y-X'),V\rangle\big)$$
for any $W\in \partial\mathcal{G}(Y)$. Inserting $V=Y-X'$ and using that \eqref{ineq} holds, we conclude that $$(\mathcal{R}_{reg})'_{Y-X'}(Y)> 2 \big(\delta_{2K}^- \|Y-X'\|^2-\|Y-X'\|^2 +\|\A (Y-X')\|^2\big)\geq 0,$$
where the last inequality follows from the definition of $\delta_{2K}^-$. It follows that the function $t\mapsto \mathcal{R}_{reg}(X'+t(X''-X'))$ has a positive right derivative at every point $t>0$, and hence $\mathcal{R}_{reg}(X'')> \mathcal{R}_{reg}(X')$. This shows that $X'$ is a solution to \eqref{k} and moreover that as such it is unique, as desired.
\end{proof}

A central ingredient in our previous work \cite{carlsson2020unbiased} is to prove statements such as \eqref{ineq} in the vector setting. In the coming section we provide tools to lift such statements to the matrix setting, which will enable us to prove results similar to those in \cite{carlsson2020unbiased} for matrices instead of vectors. This is done in Sections \ref{secrank} and \ref{secrankf}.


\section{Lifting results about vectors to matrices}\label{sec:vectomatrix}

Let us now say that we are interested in finding low rank matrices $X$ in  $\M_{n_1,n_2}$, and set $n=\min(n_1,n_2)$. We let $X=U\Lambda_{\sigma(X)} V^*$ be the singular value decomposition of $X$ where the vector of singular values is denoted $\sigma(X)$. In the case when $n_1\neq n_2$ we use the definition where $U\in\M_{n_1,n_1}$ and $V\in\M_{n_2,n_2}$
are unitary, $\sigma(X)\in \R^n$ and $\Lambda_{\sigma(X)}$ denotes a non-square diagonal matrix (with $\sigma(X)$ on the diagonal).

If $f$ is a sparsity inducing functional on $\R_n$, then it is natural that $X\mapsto f(\sigma(X))$ is a low rank inducing functional on $\M_{n_1,n_2}$. An example of this is the nuclear norm, which arises as $\|X\|_{*}=\|\sigma(X)\|_1$ or even $\text{rank}(X)$, which equals $\text{card}(\sigma(X))$.
Although this is often straightforward to implement, it is unfortunately not trivial to ``lift'' results about vectors to the matrix setting. In this section we provide several results simplifying such ``liftings'', with a particular focus on quadratic envelopes and Proposition \ref{thm:statpoint1}.

For example, inequalities such as \eqref{ineq} are established in \cite{carlsson2020unbiased} for vectors, and it is natural to hope that they also apply in the matrix setting. This is indeed the case, but the matrix problem  is substantially more difficult since the effects of the unitary matrices $U$, $V$ from the SVD cannot be ignored. In what follows we will show that tightness of the bounds occur when $U$ and $V$ are matrices
that permute and change signs of vector elements. Therefore lifting to the matrix setting can be done by considering the worst case permutations and sign changes of the singular values.


A functional $f:\R^n \to \mathbb{R}$ is called absolutely symmetric if $f(x)=f(\Pi x)$ for all $\Pi\in Per$ and \begin{equation}\label{extf}f(x)=f\big((|x_1|,\ldots,|x_n|)\big), \quad x\in\R^n.\end{equation}
Given any such $f$, we extend it to $\M_{n_1,n_2}$ by setting
\begin{equation}\label{liftS}F(X)=f(\sigma(X)),\quad X\in\M_{n_1,n_2}.\end{equation}
The following results show how to connect the theory for $F$ with a scalar theory for $f$.
Given a vector $\gamma$ we let $\Lambda_\gamma$ denote the corresponding diagonal matrix with $\gamma$ on the diagonal. We omit the details of the following basic proof.

\begin{lemma}\label{l0g}
If $f$ is absolutely symmetric and $\Pi\in  Per$ is a permutation then $z\in\partial f(x)$ if and only if $\Pi z\in\partial f(\Pi x)$. If $\gamma$ is a vector with only $\pm 1$, then $\partial f(\Lambda_{\gamma} x)=\Lambda_{\gamma}\partial f(x)$.
\end{lemma}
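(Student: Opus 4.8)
The plan is to prove Lemma~\ref{l0g} directly from the definition of the subdifferential of a convex function, exploiting the invariance properties built into the definition of an absolutely symmetric $f$. Recall that $z\in\partial f(x)$ means $f(y)\geq f(x)+\re\scal{z,y-x}$ for all $y\in\R^n$. Throughout I will use that a permutation matrix $\Pi$ and a diagonal sign matrix $\Lambda_\gamma$ (with $\gamma_i=\pm 1$) are orthogonal, so that $\Pi^{-1}=\Pi^*$ and $\Lambda_\gamma^{-1}=\Lambda_\gamma^*=\Lambda_\gamma$, and that they preserve the scalar product: $\scal{Mu,Mv}=\scal{u,v}$ for any such $M$.

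For the permutation statement, suppose $z\in\partial f(x)$, so $f(y)\geq f(x)+\re\scal{z,y-x}$ for every $y$. First I would fix an arbitrary $w\in\R^n$ and substitute $y=\Pi^{-1}w$ into this inequality. Using that $f$ is absolutely symmetric, we have $f(\Pi^{-1}w)=f(w)$ and $f(x)=f(\Pi x)$. For the inner-product term, orthogonality of $\Pi$ gives $\scal{z,\Pi^{-1}w-x}=\scal{\Pi z,w-\Pi x}$. Combining these yields $f(w)\geq f(\Pi x)+\re\scal{\Pi z,w-\Pi x}$ for all $w$, which is exactly the statement $\Pi z\in\partial f(\Pi x)$. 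The converse direction follows by the identical argument applied to $\Pi^{-1}$ (again a permutation), or simply by noting the equivalence is symmetric under replacing $(x,z)$ by $(\Pi x,\Pi z)$. This establishes the ``if and only if''.

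For the sign statement, the argument is structurally the same. To prove $\Lambda_\gamma\partial f(x)\subseteq\partial f(\Lambda_\gamma x)$, take $z\in\partial f(x)$ and substitute $y=\Lambda_\gamma^{-1}w=\Lambda_\gamma w$ into the defining inequality. Since the entries of $\Lambda_\gamma w$ are those of $w$ up to sign, property~\eqref{extf} gives $f(\Lambda_\gamma w)=f(w)$, and likewise $f(x)=f(\Lambda_\gamma x)$. Orthogonality of $\Lambda_\gamma$ gives $\scal{z,\Lambda_\gamma w-x}=\scal{\Lambda_\gamma z,w-\Lambda_\gamma x}$, so we obtain $f(w)\geq f(\Lambda_\gamma x)+\re\scal{\Lambda_\gamma z,w-\Lambda_\gamma x}$, i.e.\ $\Lambda_\gamma z\in\partial f(\Lambda_\gamma x)$. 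Applying the same inclusion with $x$ replaced by $\Lambda_\gamma x$ (and using $\Lambda_\gamma^2=I$) gives the reverse inclusion, so $\partial f(\Lambda_\gamma x)=\Lambda_\gamma\partial f(x)$ as claimed.

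I do not anticipate a genuine obstacle here; the lemma is essentially a bookkeeping exercise in how the subdifferential transforms under an orthogonal change of variables that happens to be a symmetry of $f$. The only point requiring minor care is that $f$ is $[0,\infty]$-valued and possibly nonsmooth, so I must work with the subgradient inequality rather than gradients, and I should confirm that absolute symmetry is exactly what makes both the function values and (via orthogonality) the linear terms invariant under $\Pi$ and $\Lambda_\gamma$. This is precisely why the authors remark that they ``omit the details'' — each implication is a one-line substitution. If $f$ were not convex one would instead invoke the limiting/Fréchet subdifferential, but the same change-of-variables identities hold verbatim, so the proof would be unaffected.
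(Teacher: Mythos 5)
Your proof is correct and is exactly the routine substitution argument the authors have in mind — the paper explicitly states ``We omit the details of the following basic proof,'' and your direct verification via the subgradient inequality, using orthogonality of $\Pi$ and $\Lambda_\gamma$ together with absolute symmetry of $f$, supplies precisely those omitted details. Your closing caveat is also apposite: the lemma is invoked in the paper for the convex function $\G_f$, and in the general nonconvex case the Fr\'echet/limiting subdifferentials transform identically under these orthogonal symmetries, so nothing further is needed.
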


At this point we need the definitions of $\Q_2$ and $\S_2$, which were given in Section~\ref{regS}.

\begin{lemma}\label{l1g}
If $f$ is absolutely symmetric the same holds for $\S_2(f)$ and $\Q_2(f)$.
\end{lemma}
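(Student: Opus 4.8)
The plan is to prove that absolute symmetry is preserved under the two operators $\S_2$ and $\Q_2$. Since $\Q_2 = \S_2 \circ \S_2$, it suffices to establish the claim for $\S_2$; the statement for $\Q_2$ then follows immediately by composing. So the whole task reduces to showing: if $f$ is absolutely symmetric, then $\S_2(f)$ is absolutely symmetric.

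**First I would** recall the defining formula $\S_2(f)(Y) = \sup_X \big(-f(X) - \|X - Y\|^2\big)$ (taking $\gamma = 2$). Absolute symmetry of a functional on $\R^n$ means invariance under the group generated by coordinate permutations $\Pi \in Per$ and sign changes $\Lambda_\gamma$ with $\gamma \in \{\pm 1\}^n$. Let me call this group $G$; every element $g \in G$ acts as a signed permutation matrix, and crucially each such $g$ is \emph{orthogonal}, so $\|gX - gY\| = \|X - Y\|$ and the Euclidean norm is $G$-invariant. The core computation is then a change of variables in the supremum. Fix $g \in G$ and compute
\[
\S_2(f)(gY) = \sup_X \big(-f(X) - \|X - gY\|^2\big) = \sup_{X'} \big(-f(gX') - \|gX' - gY\|^2\big),
\]
where I substituted $X = gX'$ (a bijection since $g$ is invertible). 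Using $f(gX') = f(X')$ (absolute symmetry of $f$) and $\|gX' - gY\| = \|X' - Y\|$ (orthogonality of $g$), the right-hand side collapses to $\sup_{X'}\big(-f(X') - \|X' - Y\|^2\big) = \S_2(f)(Y)$. Hence $\S_2(f)(gY) = \S_2(f)(Y)$ for every $g \in G$, which is precisely absolute symmetry of $\S_2(f)$.

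**Then I would** conclude the $\Q_2$ part in one line: since $f$ absolutely symmetric implies $\S_2(f)$ absolutely symmetric, a second application gives that $\Q_2(f) = \S_2(\S_2(f))$ is absolutely symmetric as well. It is worth remarking that the argument works verbatim for both permutation invariance and sign-change invariance because both are realized by orthogonal transformations, so the two halves of the definition \eqref{extf} need not be treated separately.

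**The main obstacle** is essentially bookkeeping rather than anything deep: one must be careful that the two requirements packaged in absolute symmetry — invariance under $Per$ \emph{and} the sign/absolute-value condition \eqref{extf} — both correspond to orthogonal maps, so that the single change-of-variables argument covers them simultaneously. A minor subtlety to state cleanly is that the supremum is over all of $\R^n$ (or $\M_{n_1,n_2}$), so the substitution $X \mapsto gX'$ is a genuine bijection of the domain and the supremum is unchanged; no measurability or attainment issues arise since we manipulate the supremum directly. Thus the proof is short, and the only thing to get right is the invariance of $\|\cdot\|$ under the signed-permutation action, which is immediate.
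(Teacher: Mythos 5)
Your proof is correct and follows essentially the same route as the paper: reduce to $\S_2$ via $\Q_2=\S_2\circ\S_2$, note that absolute symmetry is equivalent to invariance under the signed permutations $\Lambda_\gamma\Pi$, and perform the change of variables $X\mapsto gX'$ in the supremum using that these maps are orthogonal, hence norm-preserving. The paper's proof carries out exactly this substitution (writing it as $x\mapsto(\Lambda_\gamma\Pi)^*x$), so there is nothing to add.
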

\begin{proof}
Since $\Q_2=\S_2\circ\S_2$ it suffices to prove that $\S_2(f)$ is absolutely symmetric. It is easy to see that $f$ is absolutely symmetric if and only if $f(\Lambda_\gamma\Pi x)=f(x)$, for any $\gamma$ and $\Pi$ as in the previous lemma. Since both $\Pi$ and $\Lambda_{\gamma}$ are unitary and the Frobenius norm is invariant under multiplication by unitary matrices, we have \begin{align*}&\S_{2}(f)(\Lambda_\gamma\Pi y)=\sup_x - f(x)-\|{x-\Lambda_\gamma\Pi y}\|^2=   \\&\sup_x - f((\Lambda_\gamma\Pi )^*x)-\|{(\Lambda_\gamma\Pi )^*x-y}\|^2=\S_2(f)(y).\end{align*}
\end{proof}

\begin{proposition}\label{l2g}
Let $f$ be absolutely symmetric and  let $F$ be given by \eqref{liftS}. Then $\Q_2(F)(X)=\Q_{2}(f)(\sigma(X))$.
\end{proposition}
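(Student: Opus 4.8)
The plan is to prove the stronger intermediate identity that $\S_2$ itself commutes with the lifting \eqref{liftS}, namely $\S_2(F)(Y)=\S_2(f)(\sigma(Y))$ for every $Y$, and then to iterate it using $\Q_2=\S_2\circ\S_2$. First I would expand the defining supremum: writing $\S_2(F)(Y)=\sup_X -f(\sigma(X))-\|X-Y\|^2$ and using $\|X-Y\|^2=\|\sigma(X)\|^2-2\re\scal{X,Y}+\|\sigma(Y)\|^2$ (recall $\|X\|=\|\sigma(X)\|$), I would reorganize the supremum over $X$ as a supremum over the singular values $s=\sigma(X)$, taken non-negative and non-increasing, followed by a supremum over the unitary factors $U,V$. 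Since both $f(\sigma(X))$ and $\|\sigma(X)\|^2$ depend only on $s$, the inner supremum affects only the cross term $2\re\scal{X,Y}$.

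The key step is to evaluate $\sup_{U,V}\re\scal{U\Lambda_s V^*,Y}$. By the von Neumann trace inequality this equals $\scal{s,\sigma(Y)}$, the maximum being attained by choosing $U,V$ to be the singular vectors of $Y$ (using the non-square diagonal convention of Section~\ref{sec:vectomatrix} when $n_1\neq n_2$). Substituting back gives
$$\S_2(F)(Y)=\sup_{s\geq 0 \text{ sorted}}\Big(-f(s)-\|s\|^2+2\scal{s,\sigma(Y)}-\|\sigma(Y)\|^2\Big).$$

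On the vector side I would show the right-hand member equals $\S_2(f)(\sigma(Y))=\sup_{x\in\R^n}-f(x)-\|x-\sigma(Y)\|^2$. Because $f$ is absolutely symmetric and $\sigma(Y)\geq 0$, replacing any competitor $x$ by the non-increasing rearrangement of $(|x_1|,\dots,|x_n|)$ leaves $f(x)$ and $\|x\|^2$ unchanged while, by the rearrangement inequality, not decreasing $\scal{x,\sigma(Y)}$; hence the supremum over all $x$ is already attained on non-negative sorted vectors and matches the display above. This proves $\S_2(F)(Y)=\S_2(f)(\sigma(Y))$, i.e.\ $\S_2(F)$ is again the lift (in the sense of \eqref{liftS}) of the scalar function $\S_2(f)$.

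Finally I would iterate. By Lemma~\ref{l1g} the function $g:=\S_2(f)$ is again absolutely symmetric, and $\S_2(F)$ is its lift; applying the identity just established to $g$ gives $\S_2(\S_2(F))(X)=\S_2(g)(\sigma(X))$. Since $\Q_2=\S_2\circ\S_2$, this yields $\Q_2(F)(X)=\S_2(g)(\sigma(X))=\Q_2(f)(\sigma(X))$, as claimed. I expect the main obstacle to be the cross-term evaluation: carefully invoking von Neumann's trace inequality together with its equality case, and matching the ordering and sign conventions so that the attained value is exactly $\scal{s,\sigma(Y)}$ with both vectors sorted, including the bookkeeping for the non-square SVD when $n_1\neq n_2$.
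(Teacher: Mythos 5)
Your proof is correct and takes essentially the same route as the paper's: both establish the intermediate identity $\S_2(F)(Y)=\S_2(f)(\sigma(Y))$ via von Neumann's trace inequality, use absolute symmetry (the paper via \eqref{extf}, you via the rearrangement inequality) to pass between sorted non-negative vectors and all of $\R^n$, and then conclude by iterating $\Q_2=\S_2\circ\S_2$. Your explicit appeal to Lemma \ref{l1g} to justify the second application is simply a more careful spelling-out of the paper's terse ``iterating this twice.''
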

\begin{proof}
$\S_2(F)(Y)$ is given by $$\sup_X - f(\sigma(X))-\|{X-Y}\|^2=\sup_X - f(\sigma(X))-\|X\|^2+\|Y\|^2-2\Re{\scal{X,Y}}.$$ von Neumann's trace inequality implies that the supremum is attained for an $X$ that shares singular vectors with $Y$ (see e.g.~\cite{carlsson2021neumann}). For such $X$ we have $\scal{X,Y}=\sum_{j=1}^n\xi_j\sigma_j(Y)$ where $\xi$ is a reordering of the singular values of $X$. Since $f$ is symmetric we have $f(\sigma(X))=f(\xi)$ and so
$$\S_{2}(F)(Y)=\sup_{\xi\in\R_+^n} - f(\xi)-\|\xi-\sigma(Y)\|^2=\sup_{\xi\in\R^n} - f(\xi)-\|\xi-\sigma(Y)\|^2=\S_2(f)(\sigma(Y))$$ where \eqref{extf} was used in the second identity.
The corresponding identity for $\Q_2$ follows by iterating this twice; $$\Q_2(F)(X)=\S_2(\S_2(F))(X)=\S_2(\S_2(f))(\sigma(X))=\Q_2(f)(\sigma(X)).$$
\end{proof}

Recall the function $\G$ introduced in \eqref{gdef}, which is central to our argument.  In the present setting, we have both $\G_f$ (for vectors) and $\G_F$ (for matrices). Our next task is to relate these two.

\begin{lemma}\label{l3g}
With $f, ~F, ~\G_f$ and $\G_F$ as above, we have $$\G_F(X)=\G_f(\sigma(X)).$$ Moreover, if $U$ and $V$ are unitary then $U\partial\G_F(X)=\partial\G_F(U^*X)$ and $\partial\G_F(X)V=\partial\G_F(XV^*)$. Finally, given SVD $X=U\Lambda_{\sigma(X)}V^*$, we have $$\partial\G_F(X)=U\Lambda_{\partial\G_f(\sigma(X))}V^*.$$
\end{lemma}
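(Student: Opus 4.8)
The first assertion is essentially bookkeeping. By Proposition \ref{l2g} we have $\Q_2(F)(X)=\Q_2(f)(\sigma(X))$, and since the Frobenius norm satisfies $\|X\|^2=\|\sigma(X)\|^2$, the defining formula \eqref{gdef} gives
\begin{equation*}
\G_F(X)=\tfrac12\Q_2(F)(X)+\tfrac12\|X\|^2=\tfrac12\Q_2(f)(\sigma(X))+\tfrac12\|\sigma(X)\|^2=\G_f(\sigma(X)).
\end{equation*}
Two facts come for free: $\G_F$ is convex, being half of an l.s.c.\ convex envelope, and, by Lemma \ref{l1g} applied to $\Q_2(f)$ together with the symmetry of $x\mapsto\|x\|^2$, the generating function $\G_f$ is absolutely symmetric. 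Moreover, because $F$ and $\|\cdot\|$ are invariant under the linear isometries $X\mapsto UX$ and $X\mapsto XV$ (unitary $U,V$), the same holds for their l.s.c.\ convex envelope, so that $\G_F(UXV)=\G_F(X)$.

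Second, I would turn this invariance into the stated subdifferential transformation rules by a one-line change of variables in the subgradient inequality. If $Z\in\partial\G_F(UX)$ then, substituting the test point $B=UC$ and using $\G_F(UC)=\G_F(C)$ and $\re\scal{Z,U(C-X)}=\re\scal{U^*Z,C-X}$, the inequality $\G_F(B)\ge\G_F(UX)+\re\scal{Z,B-UX}$ becomes $\G_F(C)\ge\G_F(X)+\re\scal{U^*Z,C-X}$ for all $C$; hence $U^*Z\in\partial\G_F(X)$, and conversely, so that $U\partial\G_F(X)=\partial\G_F(UX)$. The identical computation on the right gives $\partial\G_F(X)V=\partial\G_F(XV)$. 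Applying both rules to an SVD $X=U\Lambda_{\sigma(X)}V^*$ reduces the problem to the diagonal case, namely $\partial\G_F(X)=U\,\partial\G_F(\Lambda_{\sigma(X)})\,V^*$, so that everything hinges on showing $\partial\G_F(\Lambda_{\sigma(X)})=\Lambda_{\partial\G_f(\sigma(X))}$.

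This last identity is the crux, and the main obstacle. For the inclusion $\supseteq$, given $d\in\partial\G_f(\sigma(X))$ — which we may assume shares the sign and ordering of the sorted nonnegative vector $\sigma(X)$, as permitted by the absolute symmetry of $\G_f$ and Lemma \ref{l0g} — von Neumann's trace inequality \cite{carlsson2021neumann} yields $\re\scal{\Lambda_d,Y}\le\scal{d,\sigma(Y)}$ for every $Y$; feeding this into the vector subgradient inequality $\G_f(\sigma(Y))\ge\G_f(\sigma(X))+\scal{d,\sigma(Y)-\sigma(X)}$ and recalling $\G_F=\G_f\circ\sigma$ shows $\Lambda_d\in\partial\G_F(\Lambda_{\sigma(X)})$. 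For the reverse inclusion, the Fenchel equality $\re\scal{Z,\Lambda_{\sigma(X)}}=\G_F(\Lambda_{\sigma(X)})+\G_F^*(Z)$ combined with the equality case of von Neumann's inequality forces any $Z\in\partial\G_F(\Lambda_{\sigma(X)})$ to share singular vectors with $\Lambda_{\sigma(X)}$, i.e.\ to be diagonal; restricting the subgradient inequality to diagonal test matrices then identifies its diagonal as an element of $\partial\G_f(\sigma(X))$ and kills any imaginary diagonal part. The delicate point — and where I expect to spend the most effort — is precisely this off-diagonal vanishing, since it relies on the first-order insensitivity of the singular values to off-diagonal perturbations (equivalently, on the equality conditions in von Neumann's inequality), and the case of repeated or vanishing singular values must be handled using the block-unitary freedom supplied by the invariance established in the second paragraph. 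Combining the two inclusions with the reduction above yields $\partial\G_F(X)=U\Lambda_{\partial\G_f(\sigma(X))}V^*$, completing the proof.
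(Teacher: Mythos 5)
Your handling of the first two assertions matches the paper: the identity $\G_F(X)=\G_f(\sigma(X))$ comes from Proposition \ref{l2g} together with $\|X\|=\|\sigma(X)\|$, and the paper likewise dismisses the unitary transformation rules as following from the invariance of the Frobenius scalar product (your change of variables in the subgradient inequality is a correct way to make that explicit). The real divergence is the final identity: the paper does not prove it at all, but cites Corollary 2.5 of \cite{lewis1995convex}, whereas you set out to re-derive that result via Fenchel equality and the equality case of von Neumann's trace inequality. That is in fact the standard proof strategy for Lewis's theorem, so in spirit you are reconstructing the cited result rather than finding a new route --- which would be perfectly acceptable if the reconstruction were complete.

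It is not, and the flaw sits exactly where you say you expect to spend the most effort. Equality in von Neumann's inequality does \emph{not} force $Z\in\partial\G_F(\Lambda_{\sigma(X)})$ to be diagonal; it only yields the existence of \emph{some} simultaneous SVD of $Z$ and $\Lambda_{\sigma(X)}$, and when $\sigma(X)$ has repeated entries or zeros this simultaneous SVD need not use the standard basis. Indeed the diagonality claim is false: take $f=\mu\,\text{card}$ and $X=0$. Since $r_\mu(t)+t^2=2\sqrt{\mu}|t|$ for $|t|\le\sqrt{\mu}$, one checks that $\G_F(Y)\ge \sqrt{\mu}\|Y\|_*$ with equality near $0$, whence $\partial\G_F(0)=\{Z:\sigma_1(Z)\le\sqrt{\mu}\}$ is the full spectral ball, containing many non-diagonal matrices, while $\Lambda_{\partial\G_f(0)}=\Lambda_{[-\sqrt{\mu},\sqrt{\mu}]^n}$ for the fixed choice $U=V=I$ consists only of diagonal ones --- and note that rank-deficient $X$ is precisely the regime in which the lemma is later applied. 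The correct statement, and the one Lewis proves, quantifies over all simultaneous SVD pairs,
\begin{equation*}
\partial\G_F(X)=\left\{U'\Lambda_d V'^*:\ d\in\partial\G_f(\sigma(X)),\ X=U'\Lambda_{\sigma(X)}V'^*\right\},
\end{equation*}
the paper's fixed-$(U,V)$ phrasing being a common abuse of notation that is harmless for its subsequent uses (which only extract $\sigma(Z')\in\partial\G_f(\sigma(X'))$), but which your argument cannot literally establish because it is literally false. A parallel gap appears in your forward inclusion: the reduction ``we may assume $d$ shares the sign and ordering of $\sigma(X)$'' requires verifying that the sorting permutations and sign flips \emph{fix} $\Lambda_{\sigma(X)}$ (permutations within tie blocks, sign flips only on the zero block, justified by the similar-ordering properties $(d_i-d_j)(x_i-x_j)\ge 0$ and $d_i x_i\ge 0$ of subgradients of absolutely symmetric convex functions); without this bookkeeping the inclusion is proved only for already-aligned $d$. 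To repair the proof, either restate the identity in Lewis's union form and carry out the block-unitary argument in full, or do what the paper does and cite \cite{lewis1995convex}.
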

\begin{proof}
By the identity $\|X\|=\|\sigma(X)\|$ and Proposition \ref{l2g} we have $$2\G_F(X)=\Q_2(F)(X)+\|X\|^2=\Q_2(f)(\sigma(X))+\|\sigma(X)\|^2=2\G_f(\sigma(X)).$$ The following two identities are easily derived using the invariance of the Frobenius scalar product under multiplication by unitary matrices, i.e.~$\scal{A,B}=\scal{UA,UB}$ and $\scal{A,B}=\scal{AV,BV}$. The last identity now follows by Corollary 2.5 in \cite{lewis1995convex}.
\end{proof}
To use Propositions \ref{thm:statpoint} and \ref{thm:statpoint1} we are interested in the quantity \begin{equation}\label{g14}
\underset{Y\neq X'}{\inf_{Y\in\M_{n_1,n_2}}}\inf_{W\in\partial \G_F(Y)}\frac{\re\scal{W-Z',Y-X'}}{\|Y-X'\|^2}.
\end{equation}
The main difficulty lies in reduction to the scalar case, which we now show how to do. This requires some preparation. We refer the reader to \cite{andersson2016operator}, Section 3, for the basics of complex doubly substochastic (CDSS) matrices. In particular we need that the set of CDSS matrices is convex with extreme points of the form $\Lambda_\gamma\Pi$, where $\gamma$ is a vector with unimodular complex entries, and $\Pi\in Per$. Before the proof we introduce some notation, the symbol $\R^n_{\geq}$ refers to all non-increasing sequences of $\R^n$ and $O_n$ will denote the set of all unitary matrices.

\begin{proposition}\label{propQuotg}
Let \(P \subseteq \mathbb{R}^n \) be a set which is sign and permutation invariant. Let $X'=U_{X'}\Lambda_{x'}V_{X'}^*$ be a singular value decomposition of $X'$ where $x'\in \R^n_\geq$. Fix $Z'\in\partial \G_F(X')$ and let $z'\in \R^n_\geq$ be its vector of singular values. Then $$\underset{\sigma(Y) \in P}{\underset{Y\neq X'}{\inf_{Y\in \M_{n_1,n_2}}}} \inf_{W\in\partial \G_F(Y)}\frac{\re\scal{W-Z',Y-X'}}{\|Y-X'\|^2}=\underset{y\neq x'}{\inf_{y\in P}}\inf_{w\in\partial \G_f(y)}\frac{\scal{w-z',y-x'}}{\|y-x'\|^2}.$$
Moreover, if one infimum is attained, then so is the other.
\end{proposition}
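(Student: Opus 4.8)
The plan is to establish the two inequalities ``$\le$'' and ``$\ge$'' between the matrix infimum $I_{\mathrm{mat}}$ (the left‑hand side) and the scalar infimum $I_{\mathrm{vec}}$ (the right‑hand side) separately, using Lemma~\ref{l3g} to place every competitor in a common singular‑value frame. Throughout I would write $X'=U_{X'}\Lambda_{x'}V_{X'}^*$ and $Z'=U_{X'}\Lambda_{z'}V_{X'}^*$ (legitimate since $Z'\in\partial\G_F(X')=U_{X'}\Lambda_{\partial\G_f(x')}V_{X'}^*$, with $x',z'\in\R^n_\geq$ after using absolute symmetry and Lemma~\ref{l0g}), and for a competitor $Y=U_Y\Lambda_{y_0}V_Y^*$ with $y_0=\sigma(Y)$ and $W\in\partial\G_F(Y)$ I would write $W=U_Y\Lambda_w V_Y^*$, $w\in\partial\G_f(y_0)$. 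The inequality $I_{\mathrm{mat}}\le I_{\mathrm{vec}}$ is the easy one: given a scalar competitor $y\in P$, $y\ne x'$, and $w\in\partial\G_f(y)$, I realise it in the frame of $X'$ by setting $Y=U_{X'}\Lambda_y V_{X'}^*$ and $W=U_{X'}\Lambda_w V_{X'}^*$. Then $\sigma(Y)\in P$ by sign/permutation invariance of $P$, $Y\ne X'$ since $y\ne x'$, and $W\in\partial\G_F(Y)$ by the covariance statements in Lemmas~\ref{l3g} and~\ref{l0g}; unitary invariance of the Frobenius product collapses $\re\scal{W-Z',Y-X'}$ and $\|Y-X'\|^2$ to their scalar counterparts, so that matrix quotient equals vector quotient.

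The heart of the matter is $I_{\mathrm{mat}}\ge I_{\mathrm{vec}}$, and this is where I expect the real work. Fixing an admissible $(Y,W)$, I would expand the four Frobenius products in $\re\scal{W-Z',Y-X'}$ and the three in $\|Y-X'\|^2$. The two ``aligned'' products are exact, $\re\scal{W,Y}=\scal{w,y_0}$ and $\re\scal{Z',X'}=\scal{z',x'}$, whereas the three ``mismatched'' products are governed entirely by the unitaries through $P:=U_Y^*U_{X'}$ and $Q:=V_{X'}^*V_Y$. Introducing $S$ with $S_{ij}:=P_{ij}Q_{ji}$ — which is complex doubly substochastic (CDSS), since the rows of $P$ and the columns of $Q$ are unit vectors — and its real part $T:=\re S$ (a real doubly substochastic matrix), a short computation gives
\begin{equation*}
\re\scal{Y,X'}=\scal{y_0,Tx'},\qquad \re\scal{W,X'}=\scal{w,Tx'},\qquad \re\scal{Z',Y}=\scal{y_0,Tz'}.
\end{equation*}
The decisive point is that all three mismatched products are controlled by the \emph{same} matrix $T$. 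Hence both the numerator $N(T)=\scal{w,y_0}+\scal{z',x'}-\scal{w,Tx'}-\scal{y_0,Tz'}$ and the denominator $D(T)=\|y_0\|^2+\|x'\|^2-2\scal{y_0,Tx'}$ are \emph{affine} in $T$, so the matrix quotient is a linear‑fractional function of $T$.

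Since $T$ lies in the compact convex set $\mathcal T$ of real doubly substochastic matrices, each competitor obeys $N(T)/D(T)\ge \inf_{T\in\mathcal T}N(T)/D(T)$, and a linear‑fractional function with positive denominator is quasilinear, so its infimum over $\mathcal T$ is attained at an extreme point, namely a signed permutation $T=\Lambda_\gamma\Pi$ (this is exactly the CDSS/DSS extreme‑point fact recalled before the proposition). Writing $\rho=(\Lambda_\gamma\Pi)^{-1}$, the products then telescope into $N=\scal{\rho w-z',\,\rho y_0-x'}$ and $D=\|\rho y_0-x'\|^2$, i.e. precisely the scalar quotient at $y:=\rho y_0$ with subgradient $\rho w$; here $y\in P$ by invariance and $\rho w\in\partial\G_f(y)$ by Lemma~\ref{l0g}, so this vertex value is $\ge I_{\mathrm{vec}}$. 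Chaining these inequalities gives $I_{\mathrm{mat}}\ge I_{\mathrm{vec}}$, hence equality. The attainment statement then comes for free in both directions: the easy construction turns a minimising vector into a minimising matrix, and the minimising vertex turns a minimising matrix into a minimising vector.

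The main obstacle is the reduction over singular vectors carried out above: recognising that the three mismatched inner products collapse onto a single doubly substochastic $T$, so that the quotient is linear‑fractional and the extreme‑point structure of $\mathcal T$ can be invoked to push the analysis down to signed permutations of the singular values. The residual care is bookkeeping that I would handle in passing — verifying $D(T)>0$ on $\mathcal T$ (so the linear‑fractional extremal principle applies and the minimising vertex cannot force the excluded locus $y=x'$), and managing ties and zeros in $x'$ and $y_0$ when identifying the diagonals of $Z'$ and $W$ with sorted subgradients via Lemma~\ref{l0g}.
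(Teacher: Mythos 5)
Your proposal follows, in substance, the same route as the paper's proof. After fixing frames, the paper expands the very same cross terms through the Hadamard product $U\odot \bar V$ (having first normalized $U_{X'}=V_{X'}=I$), observes that this matrix is complex doubly substochastic, exploits affinity in that matrix to pass to the extreme points $\Lambda_\gamma\Pi$, and identifies the vertex values with scalar quotients exactly as you do; your $T=\Re{S}$ with $S_{ij}=P_{ij}Q_{ji}$ is the same object (since $x',y_0,w,z'$ are real, only the real part matters, as you note), and your easy inequality coincides with the paper's substitution $U=\Lambda_\gamma\Pi$, $V=\Pi$.

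However, your final extremal step contains a genuine gap. You minimize the linear-fractional quotient $N(T)/D(T)$ over $\mathcal{T}$ and invoke quasilinearity, which requires $D>0$ on all of $\mathcal{T}$; you defer this as ``residual care handled in passing'', but it is false. Take any competitor $Y\neq X'$ with $\sigma(Y)=\sigma(X')$, e.g.\ $Y=U\Lambda_{x'}V^*$ with rotated frames: then $y_0=x'$ and $T=I\in\mathcal{T}$ gives $D(I)=0$, and more generally $D$ vanishes at every $T\in\mathcal{T}$ aligning $x'$ with $y_0$ when $\|y_0\|=\|x'\|$. So $N/D$ is not even defined on all of $\mathcal{T}$, the extreme-point principle for quasiconcave functions cannot be applied as stated, and a priori the quotient could blow up to $-\infty$ as $D\to 0^+$ (it does not, but that requires the observation that $D(T_0)=0$ forces $T_0x'=y_0$ and $T_0^t y_0=x'$, whence $N(T_0)=0$ — an argument you do not supply). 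The paper's device avoids the division entirely and is the natural repair: set $c$ equal to the minimum of the scalar quotient over signed permutations $\rho$ with $\rho y_0\neq x'$, and show that the \emph{affine} function $N(T)-cD(T)$ is nonnegative at every extreme point of $\mathcal{T}$ — at degenerate vertices with $\rho y_0=x'$ one gets $N=D=0$, at all others it holds by the definition of $c$ — hence nonnegative on all of $\mathcal{T}$ by affinity. Evaluating at the competitor's $T$, where $D(T)=\|Y-X'\|^2>0$ precisely because $Y\neq X'$, yields $N/D\geq c$, which is at least the scalar infimum. This simultaneously disposes of your worry about the excluded locus $y=x'$: degenerate vertices contribute the value $0$ to the affine lower bound instead of an undefined quotient. (A minor further point: the extreme-point fact recalled before the proposition is stated for the complex CDSS set, with unimodular $\gamma$ then forced to $\pm 1$ at the minimum; your direct use of the real signed substochastic polytope needs its own — standard, but unquoted — vertex characterization.)
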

\begin{proof}
For notational simplicity we assume that $n_1=n_2=n$ and remove the obvious $Y\neq X'$ and $y\neq x'$ from the below expressions, since the general case $n_1\neq n_2$ follows by straightforward modifications. By Lemma \ref{l0g} we have that \[ \begin{split}
& \inf_{y\in P}\inf_{w\in\partial \G_f(y)}\frac{\scal{w-z',y-x'}}{\|y-x'\|^2}  \\  = & \inf_{y\in\R^n_\geq \cap P}\inf_{w\in\partial \G_f(y)}\underset{\gamma\in\{-1,1\}^n}{\min_{\Pi\in Per}}\frac{\scal{\Lambda_\gamma\Pi w-z',\Lambda_\gamma\Pi y-x'}}{\|\Lambda_\gamma\Pi y-x'\|^2}
\end{split} \]
On the other hand, by Lemma \ref{l3g} and the fact that the Frobenius scalar product is invariant under multiplication by unitary matrices, it follows that we can take $U_{X'}=V_{X'}=I$ and we get \[ \begin{split}
&\underset{\sigma(Y) \in P}{\inf_{Y\in\M_{n_1,n_2}}}\inf_{W\in\partial \G_F(Y)}\frac{\re\scal{W-Z',Y-X'}}{\|Y-X'\|^2}\\=& \underset{\sigma(Y)\in P}{ \inf_{Y\in\M_{n_1,n_2}}}\inf_{W\in\partial \G_F(Y)}\frac{\re\scal{W-\Lambda_{z'},Y-\Lambda_{x'}}}{\|Y-\Lambda_{x'}\|^2}\\=&\inf_{y\in\R^n_\geq \cap P}\inf_{w\in\partial \G_f(y)}\min_{U,V\in O_n}\frac{\re\scal{U\Lambda_w V^*-\Lambda_{z'},U\Lambda_y V^*-\Lambda_{x'}}}{\|U\Lambda_y V^*-\Lambda_{x'}\|^2}
\end{split} \]
where we use the fact that $O_n$ is compact so we can be sure that the above minimum is attained. By comparing the two expressions we see that it suffices to show
\begin{equation}\label{ht5g}\begin{aligned}
&\min_{U,V\in O_n}\frac{\re\scal{U\Lambda_w V^*-\Lambda_{z'},U\Lambda_y V^*-\Lambda_{x'}}}{\|U\Lambda_y V^*-\Lambda_{x'}\|^2}\\=&\min_{\gamma\in\{-1,1\}^n}{\min_{\Pi\in Per}}\frac{\scal{\Lambda_\gamma\Pi w-z',\Lambda_\gamma\Pi y-x'}}{\|\Lambda_\gamma\Pi y-x'\|^2}\end{aligned}
\end{equation}
for fixed $y\in\R^n_{\geq}$ and $w\in\partial \G_f(y)$. Let us denote the minimum on the lower line by $c$. Note that $\Pi\Lambda_y\Pi^*=\Lambda_{\Pi y}$ and that $\Pi^*=\Pi^{-1}\in O_n$ for all $\Pi\in Per$. Thus, replacing $U$ by $\Lambda_\gamma\Pi $ and $V$ by $\Pi$ we get \begin{align*}&\min_{U,V\in O_n}\frac{\re\scal{U\Lambda_w V^*-\Lambda_{z'},U\Lambda_y V^*-\Lambda_{x'}}}{\|U\Lambda_y V^*-\Lambda_{x'}\|^2}\\ \leq & \min_{\gamma\in\{-1,1\}^n}{\min_{\Pi\in Per}}\frac{\scal{\Lambda_\gamma\Pi\Lambda_w \Pi^*-\Lambda_{z'},\Lambda_\gamma\Pi\Lambda_y \Pi^*-\Lambda_{x'}}}{\|\Lambda_\gamma\Pi\Lambda_y \Pi^*-\Lambda_{x'}\|^2}\\=&\min_{\gamma\in\{-1,1\}^n}\min_{\Pi\in Per}\frac{\scal{\Lambda_\gamma\Lambda_{\Pi  w}-\Lambda_{z'},\Lambda_\gamma\Lambda_{\Pi  y}-\Lambda_{x'}}}{\|\Lambda_\gamma\Lambda_{\Pi  y}-\Lambda_{x'}\|^2}\\=&\min_{\gamma\in\{-1,1\}^n}\min_{\Pi\in Per}\frac{\scal{\Lambda_\gamma\Pi w-z',\Lambda_\gamma\Pi y-x'}}{\|\Lambda_\gamma\Pi y-x'\|^2}=c\end{align*}
so to establish \eqref{ht5g} we just need to prove the reverse inequality. This is more difficult, we shall show the equivalent inequality \begin{equation}\label{3g}\min_{U,V\in O_n}{\re\scal{U\Lambda_w V^*-\Lambda_{z'},U\Lambda_y V^*-\Lambda_{x'}}}-c{\|U\Lambda_y V^*-\Lambda_{x'}\|^2}\geq 0.\end{equation} Treating $y,x',w,z'$ as column vectors and using $\odot$ for Hadamard multiplication of matrices, we first note that \[\begin{split}
&{\re\scal{U\Lambda_w V^*-\Lambda_{z'},U\Lambda_y V^*-\Lambda_{x'}}}-c{\|U\Lambda_y V^*-\Lambda_{x'}\|^2}\\=&{\re\scal{U\Lambda_w -\Lambda_{z'}V,U\Lambda_y-\Lambda_{x'}V}}-c{\|U\Lambda_y-\Lambda_{x'}V\|^2}\\= &
{c_1-\Re{\scal{U\Lambda_w,\Lambda_{x'}V}+\overline{\scal{U\Lambda_y,\Lambda_{z'}V}}}}+2c\re{\scal{U\Lambda_y,\Lambda_{x'}V}}\\= &
{c_1-\Re{{x'}^t( U \odot\bar V)w+{z'}^t(\bar U \odot V)y}}+{2c\Re{{x'}^t( U \odot \bar V)y}}\end{split}\]
where $c_1$ is a constant (i.e.~independent of $U$ and $V$). Then we note that $ U \odot \bar V$ is a complex doubly sub-stochastic matrix. Since the function \begin{equation*}B\mapsto {c_1-\Re{{x'}^tBz+{z'}^t \bar B y}}+2c\Re{{x'}^t B y}\end{equation*} is affine, it will attain its minimum (over the convex set of complex doubly sub-stochastic matrices) in an extreme point. By the comments before the proof, we conclude that there exists a vector of unimodular entries $\gamma$ and $\Pi\in Per$ such that the minimum equals $${c_1-\Re{{x'}^t\Lambda_\gamma \Pi w+{z'}^t \Lambda_{\bar \gamma} \Pi y}}+2c\Re{{x'}^t \Lambda_\gamma \Pi y}.$$ Clearly the unimodular numbers in $\gamma$ have to be either $+1$ or $-1$ in order for a minimum to be reached. By following the above computations backwards this can be written $${\re\scal{ \Lambda_\gamma \Pi w-z',\Lambda_\gamma \Pi x'-{x'}}}-c{\|\Lambda_\gamma \Pi y-{x'}\|^2}$$
which by the definition of $c$ clearly is greater or equal to 0. This establishes \eqref{3g} and the proof is complete.
\end{proof}

\section{Matrix case, $F=\mu\text{rank}$}\label{secrank}

In the coming two sections we consider two concrete penalties with the aim of lifting the results about sparse vector estimation from \cite{carlsson2020unbiased} to the case of matrices. In this section we want to minimize \begin{equation}\label{t571}\mathcal{R}_{\mu} (X):=\mu \, \text{rank}(X)+\|\A X-b\|^2\end{equation} which we replace by \begin{equation}\label{fr21}\mathcal{R}_{\mu,reg}(X):=\Q_2(\mu \, \text{rank})(X)+\|\A X-b\|^2.\end{equation}

To connect these expressions with the previous sections, we set $f(x)=\mu~ \text{card}(x)$ and define $F$ via \eqref{liftS}, i.e. $F(X)=f(\sigma(X))$, which yields $F(X)=\mu \, \text{rank}(X)$. Proposition \ref{l2g} then gives that
\begin{equation}\label{l00001}\Q_2(\mu \, \text{rank})(X)=\Q_2(\mu \, \text{card})(\sigma(X))\end{equation} and therefore we can reuse various results from Section 4 of \cite{carlsson2020unbiased} (which considers $\Q_2(\mu \, \text{card})$). The above formula makes it clear why the expression \eqref{eq:murankconvenv} is the l.s.c.~convex
envelope of \eqref{eq:murankapprox}, since a minor computation shows that  $\Q_2(\mu\text{card})(x)=\sum_j r_\mu(x_j)$ (see e.g.~\cite{carlsson2020unbiased}). Hence we see that \eqref{fr21}
is just another way of writing \eqref{eq:muregprob}. Finally, by Theorem \ref{celo1} we know that $\mathcal{R}_{\mu,reg}$ has the same global minima as $\mathcal{R}_\mu$, as well as potentially fewer local minima, as long as $\|\A\|<1$.

\subsection{On the uniqueness of sparse stationary points} \label{sec:uniqnesspenalty}

Given $K$ such that $\delta_{2K}^-<1$, we will show that under certain assumptions the difference between two stationary points always has rank larger than $2K$.
Hence, if we find a stationary point with rank less than $K$, then we can be sure that this has the smallest rank among the stationary points. The main theorem reads as follows:

\begin{theorem}\label{thm:statpoint:matrix}
Let $X'$ be a stationary point of $\mathcal{R}_{\mu,reg}$ with $\emph{rank}(X')\leq K$, let $Z'$ be given by \eqref{y}, and assume that
\begin{equation}\label{e41}\sigma_i(Z') \not\in\left[{(1-\delta_{2K}^-)}{\sqrt{\mu}},\frac{1}{1-\delta_{2K}^-}{\sqrt{\mu}}\right].\end{equation}
Then $X'$ is the unique solution to $$\argmin_{\emph{rank}(X)\leq K}\mathcal{R}_{\mu,reg}(X)$$ and moreover $\emph{rank}( X'') > K$ for any other stationary point of $\mathcal{R}_{\mu,reg}$. Finally, if $\A$ satisfies $\|\A\|<1$ and \begin{equation}\label{cond222}\|\A X'-b\|^2\leq \mu,\end{equation} then $X'$ is a global minimum of both $\mathcal{R}_{\mu}$ and $\mathcal{R}_{\mu,reg}$.
\end{theorem}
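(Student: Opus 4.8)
The plan is to handle the three assertions in order, obtaining the first two from the reduction machinery of Proposition~\ref{propQuotg} together with Propositions~\ref{thm:statpoint} and \ref{thm:statpoint1}, and the third from a rank-splitting comparison of $\mathcal{R}_\mu$ with $\mathcal{R}_{\mu,reg}$. For the uniqueness statement the target is the strict inequality \eqref{ineq}, namely $\re\scal{W-Z',Y-X'}>\delta_{2K}^-\|Y-X'\|^2$ for every $Y$ with $\text{rank}(Y)\le 2K$ and every $W\in\partial\G_F(Y)$. First I would apply Proposition~\ref{propQuotg} with $P=\{y\in\R^n:\text{card}(y)\le 2K\}$ (which is sign and permutation invariant and encodes $\text{rank}(Y)\le 2K$ through $\sigma(Y)\in P$) to reduce the matrix quotient in \eqref{g14} to the scalar quotient for $f=\mu\,\text{card}$. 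It then remains to prove that for all $y\in P$ with $y\neq x'$ and all $w\in\partial\G_f(y)$,
\[
\scal{w-z',\,y-x'}>\delta_{2K}^-\,\|y-x'\|^2,
\]
where $z'=\sigma(Z')$. This is the coordinate-wise estimate established in \cite{carlsson2020unbiased} for $\G_f=\tfrac12\Q_2(\mu\,\text{card})+\tfrac12\|\cdot\|^2$; the interval $[(1-\delta_{2K}^-)\sqrt\mu,\ \sqrt\mu/(1-\delta_{2K}^-)]$ in \eqref{e41} is precisely the set of values of $z'_i$ for which the scalar inequality can fail, so \eqref{e41} is exactly what forces it to hold. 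With \eqref{ineq} in hand, Proposition~\ref{thm:statpoint1} yields that $X'$ is the unique minimizer of $\mathcal{R}_{\mu,reg}$ over $\text{rank}(X)\le K$. For the statement on other stationary points I would argue by contradiction: if $X''\neq X'$ were stationary with $\text{rank}(X'')\le K$, then $\text{rank}(X')+\text{rank}(X'')\le 2K$ and Proposition~\ref{thm:statpoint} gives $\re\scal{Z''-Z',X''-X'}\le\delta_{2K}^-\|X''-X'\|^2$, while \eqref{ineq} with $Y=X''$, $W=Z''$ gives the reverse strict inequality, a contradiction; hence every other stationary point has rank $>K$.

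For the global optimality assertion I would first pin down the singular values of $X'$. Under $\|\A\|<1$ we have $\delta_{2K}^-=\delta_{2K}\ge 0$, so $\sqrt\mu$ lies in the (possibly degenerate) interval of \eqref{e41}, whence $\sigma_i(Z')\neq\sqrt\mu$ for every $i$. Since $X'$ solves the proximal problem \eqref{hy}, which by von Neumann's inequality (as in the proof of Proposition~\ref{l2g}) separates into the scalar problems $\min_{\xi_j}r_\mu(\xi_j)+(\xi_j-\sigma_j(Z'))^2$, and since $\sigma_j(Z')\neq\sqrt\mu$ makes each of these a uniquely solved hard thresholding, every nonzero singular value of $X'$ is strictly larger than $\sqrt\mu$. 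Consequently $r_\mu$ evaluates to $\mu$ on each of them, so $\Q_2(\mu\,\text{rank})(X')=\mu\,\text{rank}(X')$, i.e. $\mathcal{R}_{\mu,reg}(X')=\mathcal{R}_\mu(X')$. Now split an arbitrary $X$ by rank. If $\text{rank}(X)>K$ then $\mathcal{R}_\mu(X)\ge\mu(K+1)\ge\mu\,\text{rank}(X')+\|\A X'-b\|^2=\mathcal{R}_\mu(X')$, where the middle step uses $\text{rank}(X')\le K$ and \eqref{cond222}. If $\text{rank}(X)\le K$ then, using $\Q_2(\mu\,\text{rank})\le\mu\,\text{rank}$, the uniqueness result, and the equality just proved,
\[
\mathcal{R}_\mu(X)\ \ge\ \mathcal{R}_{\mu,reg}(X)\ \ge\ \mathcal{R}_{\mu,reg}(X')\ =\ \mathcal{R}_\mu(X').
\]
Thus $X'$ is a global minimizer of $\mathcal{R}_\mu$, and since $\|\A\|<1$, Theorem~\ref{celo1} identifies the global minimizer sets of $\mathcal{R}_\mu$ and $\mathcal{R}_{\mu,reg}$, so $X'$ is a global minimizer of $\mathcal{R}_{\mu,reg}$ as well.

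I expect the main obstacle to be the scalar estimate underlying \eqref{ineq}: establishing the strict inequality with the \emph{exact} interval of \eqref{e41} requires the explicit coordinate-wise description of $\partial\G_f$ for $f=\mu\,\text{card}$ and a careful case split separating coordinates with $x'_i=0$ (which forces $|z'_i|<\sqrt\mu$) from those with $x'_i>\sqrt\mu$; locating the precise threshold is the delicate part, even though it is inherited from \cite{carlsson2020unbiased}. A secondary technical point is the strictness/attainment bookkeeping when transferring the scalar inequality to matrices through Proposition~\ref{propQuotg}, together with the verification that the boundary case $\sigma_i(Z')=\sqrt\mu$ is genuinely excluded, which is exactly where the hypothesis $\|\A\|<1$ (hence $\delta_{2K}^-\ge 0$) enters the global-optimality step.
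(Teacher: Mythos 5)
Your route for the first two assertions is essentially the paper's own: reduce \eqref{ineq} to a scalar inequality via Proposition \ref{propQuotg} (the paper does this with a contradiction argument rather than fixing $P=\{\text{card}\le 2K\}$, but that difference is immaterial), invoke the coordinate-wise estimate from \cite{carlsson2020unbiased} (Lemma \ref{corbor}), and conclude via Propositions \ref{thm:statpoint} and \ref{thm:statpoint1}. Your third part deserves credit: the paper dismisses it as a ``straightforward adaptation of Theorem 4.5 of \cite{carlsson2020unbiased}'' with details omitted, and you have reconstructed the argument correctly in full — the observation that $\|\A\|<1$ forces $\sqrt{\mu}$ into the excluded interval of \eqref{e41}, hence $\sigma_i(Z')\neq\sqrt{\mu}$; the hard-thresholding identification of $\sigma(X')$ from \eqref{hy}; the resulting identity $\mathcal{R}_{\mu,reg}(X')=\mathcal{R}_\mu(X')$; the rank split using \eqref{cond222}; and the final appeal to Theorem \ref{celo1}.

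The one genuine gap is in the uniqueness part: the scalar estimate you cite is stated (both as Lemma \ref{corbor} here and in \cite{carlsson2020unbiased}) only under $0<\delta_{2K}^-<1$, whereas the theorem does \emph{not} assume $\|\A\|<1$ for the first two claims, so $\delta_{2K}^-\le 0$ is possible — the paper explicitly notes that the lower constants, unlike ordinary RIP constants, can be negative. Your claim that the interval in \eqref{e41} is ``precisely the set of values of $z_i'$ for which the scalar inequality can fail'' breaks down exactly at $\delta_{2K}^-=0$: there the interval degenerates to $\{\sqrt{\mu}\}$, and monotonicity of $\partial\mathcal{G}_f$ (which is the subdifferential of a convex function) only yields $\scal{w-z',y-x'}\geq 0$, not the required strict inequality. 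The paper closes this with a separate argument: writing $\mathcal{G}_{\Q_2(\mu\,\text{card})}(x)=\sum_j g(x_j)$, it uses $z_i'\neq\sqrt{\mu}$ to force $x_i'\notin(0,\sqrt{\mu}]$, and then at any coordinate with $y_i\neq x_i'$ inspection of the graph of $\partial g$ gives $(w_i-z_i')(y_i-x_i')>0$. The case $\delta_{2K}^-<0$ is trivial (monotonicity gives $\scal{w-z',y-x'}\geq 0>\delta_{2K}^-\|y-x'\|^2$ since $y\neq x'$) but still needs to be stated. Note the gap is harmless in your third part, since $\|\A\|<1$ forces $\delta_{2K}^->0$ there; it only affects the generality of the uniqueness claims.
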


To gain some intuition about the point $Z'$ we recall that by \eqref{hy} we have that $X'$ solves
\begin{equation}
\min_X \Q_{2}(\mu \text{rank})(X) + \|X-Z'\|^2.
\end{equation}
By the discussion following \eqref{eq:nuclearprox} we have that $X'$ can be computed from $Z'$ by performing an SVD of $Z'$ and hard threshold the singular values at $\sqrt{\mu}$, (while keeping the singular vectors unchanged).
Loosely speaking the theorem says that if the singular values of $Z'$ are not too close to the threshold $\sqrt{\mu}$, then the difference to any other stationary point has to be of high rank. Whether this is true or not depends on the level of noise, which we study in Section~\ref{noisy1}.

In particular, Theorem \ref{thmnonoise} in the introduction deals with the noise free case, and it is easy to see that it follows from Theorem \ref{thm:statpoint:matrix} and some simple computations;
In this case we have $b = \A X_0$ for some $X_0$ with rank $K$, and we select $\mu$ so that $\frac{\sqrt{\mu}}{1-\delta_{2K}^-}$ is smaller than $\sigma_K(X_0)$, where it is assumed that $0<\delta_{2K}^-<1$. This implies that the non-zero values of $\sigma(X_0)$ are larger than $\sqrt{\mu}$ which, considering the concrete expression for $r_{\mu}$, implies that $X_0$ is a local minimizer of \eqref{eq:muregprob}, and hence it is a stationary point. Moreover we have $Z_0 = (I-\A^*\A )X_0 - \A^* \A X_0 = X_0$, which clearly fulfills the assumptions of the above theorem, by which the remaining assertions in Theorem \ref{thmnonoise} follow.

For the proof of Theorem~\ref{thm:statpoint:matrix} we make use of Section \ref{sec:vectomatrix} applied to the functionals \( \mathcal{G}_{\mathcal{Q}_2(\mu \text{card})  }   \) on $\R^n$ and \( \mathcal{G}_{\mathcal{Q}_2(\mu\text{rank})  }   \) on $\M_{n_1,n_2}$. We will also need Lemmas 4.3 and 4.4 of \cite{carlsson2020unbiased} which can be summed up as follows.

 \begin{lemma}\label{corbor}
Let $z'$ satisfy $z'\in\partial \mathcal{G}_{\mathcal{Q}_2(\mu\emph{card})}(x')$ and
	\begin{equation}\label{e4t}z_i'\not\in\left[(1-\delta_{2K}^-){\sqrt{\mu}},\frac{1}{1-\delta_{2K}^-}{\sqrt{\mu}}\right],\end{equation}
where we assume that $0<\delta_{2K}^-<1$.	If $y\neq x'$ and $w\in\partial \mathcal{G}_{\mathcal{Q}_2(\mu\emph{card})}(y)$, then $\scal{w-z',y-x'}>\delta_{2K}^-\|y-x'\|^2$.
\end{lemma}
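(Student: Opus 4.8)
The plan is to exploit the fact that, for $f=\mu\,\text{card}$, the function $\mathcal{G}_{\Q_2(\mu\,\text{card})}$ separates over coordinates, reducing the whole statement to an entrywise scalar inequality. Writing $\delta:=\delta_{2K}^-$ and recalling from Section~\ref{secrank} that $\Q_2(\mu\,\text{card})(x)=\sum_j r_\mu(x_j)$, the function $\mathcal{G}:=\mathcal{G}_{\Q_2(\mu\,\text{card})}$ satisfies $2\mathcal{G}(x)=\sum_j\big(r_\mu(x_j)+x_j^2\big)=:\sum_j g(x_j)$, and a one-line computation of $g$ from $r_\mu(t)=\mu-\max(\sqrt{\mu}-|t|,0)^2$ gives the even convex scalar function
\[
\mathcal{G}_1(t)=\tfrac12 g(t)=\begin{cases}\sqrt{\mu}\,|t| & |t|\le\sqrt{\mu},\\[2pt] \tfrac12(\mu+t^2) & |t|\ge\sqrt{\mu},\end{cases}
\]
with $\mathcal{G}(x)=\sum_j\mathcal{G}_1(x_j)$. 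Hence $z'\in\partial\mathcal{G}(x')$ and $w\in\partial\mathcal{G}(y)$ hold iff $z_j'\in\partial\mathcal{G}_1(x_j')$ and $w_j\in\partial\mathcal{G}_1(y_j)$ for every $j$, and the target inequality $\scal{w-z',y-x'}=\sum_j (w_j-z_j')(y_j-x_j')>\delta\sum_j(y_j-x_j')^2$ splits coordinatewise as well. Since $y\neq x'$, it therefore suffices to prove the entrywise estimate $(w_j-z_j')(y_j-x_j')\ge\delta(y_j-x_j')^2$ for every $j$, with strict inequality whenever $y_j\neq x_j'$; summing over the nonempty set of indices with $y_j\neq x_j'$ then yields the claim.

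The decisive step is to read off from \eqref{e4t} the admissible pairs $(x_j',z_j')$. The map $\partial\mathcal{G}_1$ is single valued and equals $\sgn(t)\sqrt{\mu}$ on $0<|t|\le\sqrt{\mu}$, equals $t$ on $|t|\ge\sqrt{\mu}$, and equals $[-\sqrt{\mu},\sqrt{\mu}]$ at $t=0$. Because the forbidden interval $[(1-\delta)\sqrt{\mu},\tfrac{\sqrt{\mu}}{1-\delta}]$ contains $\sqrt{\mu}$ in its interior (as $0<\delta<1$), the requirement that $z_j'$ avoid it rules out the entire intermediate regime and forces each coordinate into one of exactly two states: either $x_j'=0$ with $|z_j'|<(1-\delta)\sqrt{\mu}$, or $|x_j'|>\tfrac{\sqrt{\mu}}{1-\delta}$ with $z_j'=x_j'$ (here $z'\ge0$ in the singular-value application of this lemma; in general the hypothesis is read as a constraint on $|z_j'|$, which is what legitimizes the following reduction). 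Since $\mathcal{G}_1$ is even, the quantity $(w_j-z_j')(y_j-x_j')-\delta(y_j-x_j')^2$ is unchanged under a simultaneous sign flip of $x_j',z_j',y_j,w_j$, so I may assume $x_j'\ge0$ and am left with the two cases $x_j'=0$ and $x_j'>\tfrac{\sqrt{\mu}}{1-\delta}$.

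In the case $x_j'=0$ the claim is $(w_j-z_j')y_j\ge\delta y_j^2$; inserting $w_j=\sgn(y_j)\sqrt{\mu}$ for $0<|y_j|\le\sqrt{\mu}$ and $w_j=y_j$ for $|y_j|\ge\sqrt{\mu}$, each subcase reduces after division by $|y_j|$ to the elementary bounds $\sqrt{\mu}-|z_j'|>\delta\sqrt{\mu}\ge\delta|y_j|$ and $|y_j|(1-\delta)>|z_j'|$, both strict, using $|z_j'|<(1-\delta)\sqrt{\mu}$. In the case $x_j'>\tfrac{\sqrt{\mu}}{1-\delta}$ (so $z_j'=x_j'$ and $x_j'(1-\delta)>\sqrt{\mu}$), the comparison point $w_j$ again takes one of the three forms above, and after dividing by $(y_j-x_j')^2$ the inequality becomes a lower bound on $\tfrac{w_j-x_j'}{y_j-x_j'}$: when $y_j$ lies on the same large branch as $x_j'$ this ratio is identically $1>\delta$, whereas in the genuinely mixed configurations (for instance $0<y_j<\sqrt{\mu}$, or $y_j\le0$, against a large $x_j'$) one invokes $x_j'(1-\delta)>\sqrt{\mu}$ to get $x_j'-\sqrt{\mu}>\delta x_j'\ge\delta(x_j'-y_j)$ and its sign variants. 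The hard part will be precisely this finite but delicate case analysis of the mixed configurations, where the threshold $\tfrac{\sqrt{\mu}}{1-\delta}$ on $x_j'$ is used sharply to compensate for the fact that $\mathcal{G}_1$ is merely affine, not strongly convex, on $(-\sqrt{\mu},\sqrt{\mu})$; away from these configurations the estimate is immediate from monotonicity of $\partial\mathcal{G}_1$ together with $\delta<1$.
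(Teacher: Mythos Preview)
The paper does not give its own proof of this lemma; it is quoted from \cite{carlsson2020unbiased} (Lemmas~4.3 and~4.4 there) with only the statement reproduced. Your approach---using the separability $\mathcal{G}_{\Q_2(\mu\,\text{card})}(x)=\sum_j\mathcal{G}_1(x_j)$ to reduce to a scalar inequality and then running a case analysis on the two admissible states of $(x_j',z_j')$---is correct and is exactly the strategy of the cited source; the same scalar $g$ and the graph of $\partial g$ already surface in the present paper's handling of the borderline case $\delta_{2K}^-=0$ inside the proof of Theorem~\ref{thm:statpoint:matrix}.

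One point deserves to be made explicit rather than parenthetical. The hypothesis \eqref{e4t} must indeed be read as a constraint on $|z_j'|$ (equivalently, $z'\ge 0$, which is the only situation in which the lemma is invoked, via Proposition~\ref{propQuotg} with $z'=\sigma(Z')$). Without this the literal statement fails: take $n=1$, $x'=0$, $z'=-\sqrt{\mu}\in\partial\mathcal{G}_1(0)$, any $y\in(-\sqrt{\mu},0)$, and $w=-\sqrt{\mu}$; then $(w-z')(y-x')=0$ while $\delta(y-x')^2>0$. You flag this correctly, and once it is in place your dichotomy ($x_j'=0$ with $|z_j'|<(1-\delta)\sqrt{\mu}$, or $|x_j'|>\sqrt{\mu}/(1-\delta)$ with $z_j'=x_j'$) is forced and the remaining subcases go through exactly as you sketch.
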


\begin{proof}[Proof of Theorem \ref{thm:statpoint:matrix}]
The first two statements follows immediately by Proposition \ref{thm:statpoint} and \ref{thm:statpoint1} if we show that \begin{equation*}
\re \skal{W-Z',Y-X'} > \delta_{2K}^- \|Y-X'\|^2.
\end{equation*} for any $Y\neq X'$ and $W\in\partial\mathcal{G}_{\mathcal{Q}_2(\mu \text{rank})}(Y)$. Suppose the converse. By the results of Section \ref{sec:vectomatrix} we have $\mathcal{G}_{\mathcal{Q}_2(\mu \text{rank})} (Y)=\mathcal{G}_{\mathcal{Q}_2(\mu\text{card})}(\sigma(Y))$ and thus Proposition \ref{propQuotg} implies that there are (real) vectors $y\neq \sigma(X')$ and $w$ with $w \in \partial \mathcal{G}_{\mathcal{Q}_2(\mu \text{card})}(y)$ such that
\begin{equation}\label{gr}  \skal{w-z',y-x'} \leq \delta_{2K}^- \|y-x'\|^2
\end{equation}
for  $x'=\sigma(X'),~z'=\sigma(Z') \in\partial \mathcal{G}_{\mathcal{Q}_2(\mu\emph{card})}(x')$, where the last inclusion follows by Lemma \ref{l3g}. This contradicts Lemma \ref{corbor} in the case when $0<\delta_{2K}^-<1$.

We now consider the remaining case \( \delta_{2K}^-\leq 0  \). As noted in Section \ref{sec:unique} the function $\mathcal{G}_{\Q_2(\mu \text{card})}$ is l.s.c. convex and it is well known that it follows that its subdifferential is monotone, i.e.~that
$\langle w-z',y-x'\rangle\geq 0.$ When \( \delta_{2K}^-< 0  \) this again contradicts \eqref{gr}.

It remains to consider the case \( \delta_{2K}^-=0 \). For this we use some ideas from Theorem 4.2 of \cite{carlsson2020unbiased}, we briefly outline the details. There is a function $g$ such that $ \mathcal{G}_{\mathcal{Q}_2(\mu \text{card})}(x)=\sum_{j=1}^n g(x_j)$ (see equation (29)-(30) of \cite{carlsson2020unbiased}). Then \eqref{gr} implies that there is a $y\neq x'$ and $w\in\partial \mathcal{G}_{\mathcal{Q}_2(\mu \text{card})}(x)$ such that $\skal{w-z',y-x'} \leq 0$. By assumption we have that \( z'_i \ne  \sqrt{\mu}  \) for all \( 1 \le i \le n  \) which implies \( x'_i \notin (0,\sqrt{\mu}]  \) for all \( 1 \le i \le n  \), by inspection of the graph of $\partial g$ (see (31) in \cite{carlsson2020unbiased}). There must exist at least one index $i$ such that $y_i\neq x_i'$. It follows that $w_i\neq z_i'$ and that $(w_i-z_i')(x_i-x_i')>0$, again by inspection of the graph of $\partial g$. Thus $\scal{z-z',x-x'}>0$, which is a contradiction. This finishes the proof of the first two claims.

The last affirmation follows by utilizing Theorem \ref{celo1} and straightforward adaption of the argument of Theorem 4.5 in \cite{carlsson2020unbiased} to the present situation. We omit the details.

\end{proof}

\subsection{Noisy data.}\label{noisy1}

We now come to one of the main results of the paper, which already was mentioned in the introduction (Theorem \ref{glassintro}). It should be compared with Corollary 2.1 of \cite{carlsson2020unbiased}, treating the corresponding problem for vectors. The result is basically the same, albeit with less sharp constants. The proof on the other hand is quite different, since we can not rely on explicit formulas for the oracle solution in the present setting. The role that is played by the oracle solution in the vector setting will be replaced by the best rank $K$ solution to $\A X=b$, i.e.~the solution $X_B$ to \eqref{best}.

\begin{theorem}\label{glass}
Suppose that $b=\A X_0+\ep$ where $\|\A\|<1$ and $\emph{rank} (X_0)=K.$  Assume that $\mu$ satisfies
\[\|\ep\|\leq  \frac{{(1-\delta_{2K}^-)}^{3/2}\sqrt{\mu}}{ 3}     \] and $$\sigma_K(X_0)>\para{\frac{1}{{1-\delta_{2K}^-}}+(1-\delta_{2K}^-)}\sqrt{\mu}.$$ Then \eqref{best} has a unique solution $X_B$ which equals the unique global minimum to $\mathcal{R}_{\mu,reg}$ as well as $\mathcal{R}_{\mu}$. Moreover $$\|X_B-X_0\|\leq 2\|\ep\|/\sqrt{1-\delta_{2K}^-}$$ and $\emph{rank}(X'')> K$ for any other stationary point $X''$ of $\mathcal{R}_{\mu,reg}$.
\end{theorem}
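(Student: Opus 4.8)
The plan is to establish Theorem \ref{glass} by constructing an explicit stationary point with the right properties and then invoking the machinery of Theorem \ref{thm:statpoint:matrix}. First I would take $X_B$ to be (a) solution of the best rank $K$ problem \eqref{best}, which exists by the LRIP compactness argument in Section \ref{sec:restrictiononA}, and associate to it the point $Z_B=(I-\A^*\A)X_B+\A^*b$ as in \eqref{y}. The heart of the argument is to show that $X_B$ is in fact a stationary point of $\mathcal{R}_{\mu,reg}$ and that its associated $Z_B$ satisfies the gap condition \eqref{e41}, namely that no singular value of $Z_B$ lies in the forbidden interval $[(1-\delta_{2K}^-)\sqrt{\mu},\frac{1}{1-\delta_{2K}^-}\sqrt{\mu}]$. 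Once this is done, Theorem \ref{thm:statpoint:matrix} immediately yields uniqueness among low-rank stationary points and the fact that every other stationary point has rank $>K$; the global optimality then follows from the data-fit bound \eqref{cond222}, which I would verify using $\|\A X_B-b\|\leq\|\A X_0-b\|=\|\ep\|$ (since $X_0$ is a competitor of rank $K$) together with the hypothesis on $\|\ep\|$.

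The key quantitative step is controlling the singular values of $Z_B$. I would first bound the error $\|X_B-X_0\|$: because $X_B$ is optimal for the rank-$K$ data fit, $\|\A(X_B-X_0)\|\leq\|\A X_B-b\|+\|\ep\|\leq 2\|\ep\|$, and since $\mathrm{rank}(X_B-X_0)\leq 2K$, the LRIP lower bound gives $\|X_B-X_0\|\leq 2\|\ep\|/\sqrt{1-\delta_{2K}^-}$, which is exactly the claimed estimate \eqref{gtf}. Next, observe $Z_B-X_B=\A^*(b-\A X_B)$, so $\|Z_B-X_B\|\leq\|\A\|\cdot\|\A X_B-b\|\leq\|\ep\|$ (using $\|\A\|<1$), and combining with the previous bound controls $\|Z_B-X_0\|$. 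Using Weyl's inequality for singular values, the two-sided separation hypothesis on $\sigma_K(X_0)$ together with these perturbation bounds should force the top $K$ singular values of $Z_B$ to exceed $\frac{\sqrt{\mu}}{1-\delta_{2K}^-}$ and the remaining singular values (which come entirely from the noise-induced perturbation, since $X_0$ has rank $K$) to fall below $(1-\delta_{2K}^-)\sqrt{\mu}$; the precise constants $\frac{1}{3}$ and $\frac{1}{1-\delta_{2K}^-}+(1-\delta_{2K}^-)$ in the hypotheses are presumably calibrated so that both inequalities hold with room to spare.

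The remaining subtlety is verifying that $X_B$ is genuinely a stationary point, i.e.~that $Z_B\in\partial\G_F(X_B)$. By Lemma \ref{l3g} this reduces, via the SVD, to checking that $\sigma(Z_B)\in\partial\G_f(\sigma(X_B))$ componentwise, which in turn is a statement about the scalar subdifferential graph of $g$ described in the proof of Theorem \ref{thm:statpoint:matrix}. Here I would use that the scalar hard-thresholding map sending $Z_B$ to its proximal point coincides with $X_B$ precisely when the singular values of $Z_B$ avoid the threshold region and are correctly matched in the large-singular-value regime; the fact that $X_B$ minimizes the rank-$K$ data fit, combined with the separation of $\sigma(Z_B)$ established above, should pin down exactly this matching.

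I expect the main obstacle to be the stationarity verification rather than the singular-value estimates. The difficulty is that $X_B$ is defined as a constrained minimizer over the nonconvex manifold $R_K$, not as an unconstrained stationary point of $\mathcal{R}_{\mu,reg}$, so one cannot differentiate directly; instead one must argue that the first-order optimality condition for the rank-constrained problem, transported through the relation \eqref{hy} and the proximal/hard-threshold characterization, upgrades to the stationarity condition $Z_B\in\partial\G_F(X_B)$ for the regularized functional. This is where the interplay between the constrained-optimality of $X_B$ and the explicit form of $\partial\G_f$ must be handled carefully, and where the hypotheses on $\mu$ and the noise level do their real work in guaranteeing that the large singular values of $X_B$ remain above $\sqrt{\mu}$ and the vanishing ones stay at zero.
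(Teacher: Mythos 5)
Your proposal follows a genuinely different route from the paper's, and its quantitative steps (the bound $\|X_B-X_0\|\leq 2\|\ep\|/\sqrt{1-\delta_{2K}^-}$, the bound $\|Z_B-X_B\|\leq\|\ep\|$, the verification of \eqref{cond222}, and the singular value separation for $Z_B$) all check out. But the step you yourself flag as the main obstacle --- showing that $Z_B\in\partial\G_F(X_B)$, i.e.\ that the constrained minimizer $X_B$ of \eqref{best} is a stationary point of $\mathcal{R}_{\mu,reg}$ --- is the centerpiece of your argument, and you leave it as a hope rather than a proof. It can be filled, but it needs two ingredients you do not supply. First, rank-exactness: from $\sigma_K(X_B)\geq\sigma_K(X_0)-2\|\ep\|/\sqrt{1-\delta_{2K}^-}>0$ one gets $\mathrm{rank}(X_B)=K$ exactly, so $X_B$ is a smooth point of the fixed-rank manifold; at a rank-deficient point of $R_K$ the tangent cone is larger and the first-order conditions are weaker, so without this observation the argument fails. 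Second, the actual optimality conditions: writing $G=\A^*(\A X_B-b)$ and $X_B=U_1\Sigma_1V_1^*$ (thin SVD), manifold stationarity for \eqref{best} gives $U_1^*G=0$ and $GV_1=0$, whence $Z_B=X_B-G$ admits an SVD simultaneous with $X_B$ in block form: $\sigma_j(Z_B)=\sigma_j(X_B)>\sqrt{\mu}$ for $j\leq K$, while the remaining singular values are those of $G$ and satisfy $\sigma_1(G)\leq\|\ep\|<\sqrt{\mu}$. Combined with the explicit graph of $\partial\G_f$ (identity above $\sqrt{\mu}$, the interval $[-\sqrt{\mu},\sqrt{\mu}]$ at zero) and Lemma \ref{l3g}, this yields $Z_B\in\partial\G_F(X_B)$. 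Your vague appeal to ``transporting the first-order condition through \eqref{hy}'' does not substitute for this computation.

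The paper sidesteps stationarity of $X_B$ entirely: it starts from a global minimizer $X'$ of $\mathcal{R}_{\mu,reg}$, which is \emph{automatically} stationary, and pins down its rank by objective-value comparisons against $X_0$ --- rank $>K$ is excluded because $\mathcal{R}_{\mu}(X_0)=\mu K+\|\ep\|^2<\mu(K+1)$, and rank $K-L$ with $L\geq 1$ is excluded via the von Neumann bound $\|X'-X_0\|\geq\sqrt{L}\,\sigma_K(X_0)$ and LRIP, which force $\|\A X'-b\|\geq\sqrt{L\mu}+2\|\ep\|$ and hence a larger objective. It then identifies global minimizers of $\mathcal{R}_{\mu}$ with solutions of \eqref{best}, derives the error estimate, and verifies \eqref{e41} much as you propose (though with the cruder tail bound $\sigma_j(Z')\leq 3\|\ep\|/\sqrt{1-\delta_{2K}^-}$ for $j>K$, where your block structure would give the sharper bound $\|\ep\|$). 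So the trade is: your route buys sharper control of $\sigma(Z_B)$ and an explicit geometric picture of why $X_B$ is stationary, at the cost of a nontrivial manifold-optimality analysis; the paper's route replaces that analysis with elementary energy comparisons. One last loose end in your plan: the claimed uniqueness of the solution to \eqref{best} and of the global minimum needs an extra line --- any other solution of \eqref{best}, or any other global minimizer, would itself be a rank-$\leq K$ stationary point of $\mathcal{R}_{\mu,reg}$ (using Theorem \ref{celo1} and the fact that rank $>K$ global minimizers are excluded by $\mathcal{R}_{\mu}(X)\geq\mu(K+1)$), contradicting the uniqueness delivered by Theorem \ref{thm:statpoint:matrix}.
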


\begin{proof}
In the noise free case the theorem is true by Theorem \ref{thmnonoise}, so we may assume that $\ep\neq 0$.
As noted earlier we have $\Q_2(\mu \text{rank})(X)=\sum_j r_{\mu}(\sigma_j(X))$ and hence it follows that $$\mu \text{rank}(X)=\Q_2(\mu \text{rank})(X)$$ as long as the non-zero singular values of $X$ are all larger than $\sqrt{\mu}$. Since this is clearly true for $X_0$, we have $$\mathcal{R}_{\mu}(X_0)=\mathcal{R}_{\mu,reg}(X_0)=\mu K+\|\ep\|^2$$ so the global minimum must be smaller than this value. If $\text{rank}(X)>K$ we therefore have $\mathcal{R}_{\mu}(X)>\mathcal{R}_{\mu}(X_0)$ in view of $\mu>\|\ep\|^2$, so the global minimizer of $\mathcal{R}_{\mu}$ must have rank $\leq K$. By Theorem \ref{celo1} we know that $\mathcal{R}_{\mu}$ and $\mathcal{R}_{\mu,reg}$ share global minimizers, so this implies that a global minimizer $X'$ to $\mathcal{R}_{\mu,reg}$ must satisfy $\text{rank}(X')\leq K$. Note that a global minimizer indeed exists, since $\delta_K^{-}\leq\delta_{2K}^-<1$ and $$\mathcal{R}_{\mu,reg}(X)= \Q_2(\mu \text{rank} (X))+\|\A X-b\|^2\geq (1-\delta_{K}^-) \|X\|^2-2\re\scal{\A X,b}+\|b\|^2$$ for matrices $X$ with $\text{rank}(X)\leq K$, so a sequence $(X_k)_{k=1}^\infty$ such that $\mathcal{R}_{\mu,reg}(X_k)$ converges to the global minimum must be bounded, and the desired conclusion is immediate by the compactness of bounded sets in finite dimensional metric vector spaces (and continuity of $\mathcal{R}_{\mu,reg}$). By the same token we have that the set of best rank $K$ solutions is non-empty, as noted in Section \ref{sec:restrictiononA}.

Now assume that $X'$ is a global minimizer and that $\text{rank} (X')=K-L$ with $L\geq 1$. Given fixed singular values of $X'$, recall that $\|X'-X_0\|^2$ attains its minimum when $X'$ share singular vectors with $X_0$ \cite{carlsson2021neumann}, which easily gives that $$\|X'-X_0\|^2\geq \sum_{j=K-L+1}^K \sigma_j^2(X_0)\geq L \sigma_K^2(X_0).$$ Note that the a priori estimate for \( \|\epsilon\| \) and for \( \sigma_K (X_0)  \) can be combined to give 
\begin{align*}&\|\A X'-b\|=\|\A (X'-X_0)-\epsilon\|\geq \|{\A (X'-X_0)}\|-\|\epsilon\|\\&\geq \sqrt{1-\delta_{2K}^-}\| X'-X_0\|-\|\epsilon\|\geq \sqrt{1-\delta_{2K}^-}\sqrt{L} \sigma_K(X_0)-\|\epsilon\| \\& \geq\sqrt{L}\para{\frac{1}{\sqrt{1-\delta_{2K}^-}}+{(1-\delta_{2K}^-)}^{3/2}}\sqrt{\mu}-\|\epsilon\|\geq\sqrt{L}\sqrt{\mu}+3\|\epsilon\|-\|\epsilon\|\\&\geq \sqrt{L\mu}+2\|\ep\|\end{align*} where we used the fact that $\sqrt{1-\delta_{2K}^-}\leq \|\A\|<1$. We get \begin{equation*}\begin{split}&\mathcal{R}_{\mu,reg}(X') =\mathcal{R}_{\mu}(X') =\mu (K-L)+\|\A X'-b\|^2\ge  \\ &   \mu (K-L)+(\sqrt{L\mu}+2\|\ep\|)^2 > \mu K+4\|\ep\|^2=\mathcal{R}_{\mu,reg}(X_0)\end{split}\end{equation*} where we used that $\ep\neq 0$.
This shows that $\text{rank}(X')<K$ also is impossible, so we conclude that $\text{rank}(X')=K$ for any global minimizer $X'$.

Since all global minimizers of $\mathcal{R}_{\mu}$ have rank equal to $K$, it follows that any global minimizer $X'$ is a best rank $K$ solution as defined in \eqref{best}, and vice versa that any solution to \eqref{best} is a global minimizer of $\mathcal{R}_{\mu}$. With this at hand, we have \begin{equation*}\begin{split}&\|\ep\|^2  =\mathcal{R}_{\mu}(X_0)-K\mu\geq\mathcal{R}_{\mu}(X')-K\mu=\|\A X'-b\|^2\geq\\&  (\|\A (X'-X_0)\|-\|\ep\|)^2\geq (\sqrt{1-\delta_{2K}^-}\|X'-X_0\|-\|\ep\|)^2 \end{split}\end{equation*} and by the earlier computations (see the above estimation of $\|\A X'-b\|$) we know that $\sqrt{1-\delta_{2K}^-}\|X'-X_0\|-\|\ep\|\geq 0$. Taking the square root and rearranging gives $\|X'-X_0\|\leq 2\|\ep\|/\sqrt{1-\delta_{2K}^-}$, which is the final estimate in the theorem.

We now address the uniqueness of global minimizers. Let $X'$ be a global minimizer, which thus has rank $K.$ If $\mathcal{R}_{\mu}$ would have multiple global minimizers, this would imply the existence of another rank $K$ stationary point of $\mathcal{R}_{\mu,reg}$. To conclude the uniqueness part of the proof, it thus suffices to show that $\text{rank} (X'')>K$ for any stationary point $X''$ of $\mathcal{R}_{\mu,reg}$ other than $X'$.


This follows if we show that Theorem \ref{thm:statpoint:matrix} applies, and indeed all remaining affirmations then follow as well. Let $Z'$ be given by \eqref{y} and recall that $Z'\in\partial \G_{\Q_2(\mu\text{rank})}(X')$, which by Lemma \ref{l3g} implies that \begin{equation}\label{tgf}\sigma(Z')\in\partial \G_{\Q_2(\mu\text{card})}(\sigma(X')).\end{equation}
We need to prove that \eqref{e41} applies. To this end we first recall that $|\sigma_j(X')-\sigma_j(X_0)|\leq \|X'-X_0\|$ (which follows e.g.~by the Hoffman-Wielandt inequality). Thus \begin{equation*}\begin{split}&\sigma_K(X')  \ge \sigma_K(X_0)-\|X'-X_0\| \ge \para{\frac{1}{1-\delta_{2K}^-}+(1-\delta_{2K}^-)}\sqrt{\mu}-\frac{2}{\sqrt{1-\delta_{2K}^-}}\|\ep\|\\ & \ge \para{\frac{1}{1-\delta_{2K}^-}+(1-\delta_{2K}^-)-\frac{2}{3}(1-\delta_{2K}^-)}\sqrt{\mu} > \frac{1}{1-\delta_{2K}^-}\sqrt{\mu}.\end{split}\end{equation*}
{We have $\G_{\Q_2(\mu\text{card})}=\frac{1}{2}(\sum_{j=1}^n r_{\mu}(x_j)+x_j^2)$ where $r_{\mu}(x_j)$ is constant for $x_j>\sqrt{\mu}$.}
By \eqref{tgf} it follows that $\sigma_j(Z')=\sigma_j(X')$ whenever $\sigma_j(X')\geq \sqrt{\mu}$. This proves that $\sigma_j(Z')>\frac{1}{1-\delta_{2K}^-}{\sqrt{\mu}}$ for all $j\leq K$, as desired. Finally, \begin{align*}Z'=( I-\A^* \A)X' + \A^* b = X'-\A^*\A(X'-X_0)-\A^*\ep \end{align*}
so for $j>K$ we have \begin{equation*}\begin{split}&|\sigma_j(Z')| =|\sigma_j(Z')-\sigma_j(X')| \le \|Z'-X'\|\le  \|\A^*\|\|\A\|\|(X'-X_0)\|+\|\A\|\|\ep\|
 \le \\ &\|X'-X_0\|+\|\ep\| \leq \left( \frac{2}{\sqrt{1-\delta_{2K}^-}} + 1 \right) \|\epsilon \|\leq \left( \frac{3}{\sqrt{1-\delta_{2K}^-}}  \right) \|\epsilon \| \end{split}\end{equation*}
which is less than $(1-\delta_{2K}^-) \sqrt\mu$ under the assumption on the noise's magnitude. This establishes \eqref{e41}, so by Theorem \ref{thm:statpoint:matrix} it follows that all stationary points $X''$ of $\mathcal{R}_{\mu,reg}$ other than  $X'$ satisfy $\text{rank}(X''-X')>2K$, which means that $\text{rank}(X'')>K$, as desired.

\end{proof}


\section{Matrix case, fixed rank}\label{secrankf}

The second concrete example we wish to investigate in this paper is the choice $f=\iota_{R_K}$ (which was defined in \eqref{iotaRk}). In this case, the unregularized problem \eqref{eq:fixedrank} coincides with the problem of finding the best rank $K$ solution $X_B$ as defined in \eqref{best}. The main difference is that we now assume the model order $K$ to be known; one problem instance where this is true is for example the PhaseLift approach to the phase retrieval problem (see \cite{Carlssonphase2019}). This method does not require a correct parameter choice $\mu$ in order to find $X_B$, and hence it is the method of choice whenever $K$ is explicitly known.

To be more precise, set $f=\iota_{K}$ defined via
\begin{equation}
\iota_{K}(x) =
\begin{cases}
0 & \text{card}(x)\leq K \\
\infty & \text{otherwise}
\end{cases}\label{iotak}
\end{equation}
 which by \eqref{liftS} gives $F(X)=\iota_{R_K}(X)=\iota_K(\sigma(X))$  and
\begin{equation}\label{l00001f}\Q_2 ( \iota_{R_K})(X) = \Q_2 ( \iota_K)(\sigma(X))
\end{equation} by Proposition \ref{l2g}. As before, we want to minimize \begin{equation}\label{t571f}\mathcal{R}_K (X):=\iota_{R_K}(X) +\|\A X-b\|^2\end{equation} which we replace by \begin{equation}\label{fr21f}\mathcal{R}_{K,reg} (X):=\Q_2 ( \iota_{R_K})(X)+\|\A X-b\|^2,\end{equation} where the latter has the same global minima and potentially fewer local minima, as long as $\|\mathcal{A}\|<1$ (Theorem \ref{celo1}). The key point is that minimizing the regularized version has a much higher chance of actually finding $X_B$.

\subsection{On the uniqueness of sparse stationary points}\label{sec:uniqnessfixed}
The following result gives a condition for uniqueness of sparse (i.e. rank $\leq K$) stationary points.
\begin{theorem}\label{thm:statpoint:matrixfix}
Let $X'$ be a stationary point of $\mathcal{R}_{K,reg}$ with  $\emph{rank}(X') \le K$, let $Z'$ be given by \eqref{y}, and assume that
\begin{equation}\label{e4fix567}\sigma_{K+1}( Z')<(1 - 2 \delta_{2K}^{-})\sigma_{K}(Z').\end{equation}
Then there are no other stationary points with rank less than or equal to $K$.
\end{theorem}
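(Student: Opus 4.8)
The goal is to prove Theorem~\ref{thm:statpoint:matrixfix}, which is the fixed-rank analogue of Theorem~\ref{thm:statpoint:matrix}. The machinery to use is already in place: Proposition~\ref{thm:statpoint} gives that any two stationary points $X'$, $X''$ with $\rank(X')+\rank(X'')\le 2K$ satisfy $\re\scal{Z''-Z',X''-X'}\le\delta_{2K}^-\|X''-X'\|^2$. So to rule out a second rank-$\le K$ stationary point it suffices to prove the reverse strict inequality, i.e. $\re\scal{W-Z',Y-X'}>\delta_{2K}^-\|Y-X'\|^2$ for all $Y\neq X'$ with $\rank(Y)\le K$ and $W\in\partial\G_F(Y)$. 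The plan is to reduce this matrix inequality to a scalar inequality about singular values via Proposition~\ref{propQuotg}, and then prove the scalar inequality using the explicit description of $\partial\G_f$ for $f=\Q_2(\iota_K)$.

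**The key steps in order.**

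First I would invoke Proposition~\ref{propQuotg} with $P=\{x\in\R^n:\mathrm{card}(x)\le K\}$ (which is sign- and permutation-invariant) to translate the matrix quotient infimum into the scalar quotient infimum over vectors $y$ with $\mathrm{card}(y)\le K$. This replaces $Z'$ by its singular-value vector $z'=\sigma(Z')$, replaces $X'$ by $x'=\sigma(X')$, and gives the relation $z'\in\partial\G_f(x')$ via Lemma~\ref{l3g}. Next I would establish the analogous scalar statement: if $z'\in\partial\G_f(x')$ satisfies $z_{K+1}'<(1-2\delta_{2K}^-)z_K'$ (after sorting), then for every $y\neq x'$ with $\mathrm{card}(y)\le K$ and $w\in\partial\G_f(y)$ one has $\scal{w-z',y-x'}>\delta_{2K}^-\|y-x'\|^2$. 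Combining this scalar inequality with Proposition~\ref{thm:statpoint} then yields a contradiction against the existence of a second rank-$\le K$ stationary point, exactly as in the proof of Theorem~\ref{thm:statpoint:matrix}.

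**Proving the scalar inequality --- the main obstacle.**

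The heart of the matter is the scalar lemma, and I expect this to be the main difficulty. Because $f=\Q_2(\iota_K)$, the function $\G_f(x)=\frac12(\Q_2(\iota_K)(x)+\|x\|^2)$ is the l.s.c.\ convex envelope of $\iota_K(x)+\|x\|^2$ divided appropriately, so its subdifferential $\partial\G_f$ has an explicit, piecewise structure governed by the threshold between the $K$ largest coordinates and the rest. The condition $z_{K+1}'<(1-2\delta_{2K}^-)z_K'$ is precisely a spectral-gap condition separating the retained singular values from the discarded ones, and I would expect the proof to split $\scal{w-z',y-x'}$ into contributions from coordinates in the support of $x'$ versus those outside it, bounding each using monotonicity of $\partial\G_f$ (convexity) together with the gap. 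The subtle point is that unlike the $\mu\,\text{card}$ case in Lemma~\ref{corbor}, the indicator $\iota_K$ couples the coordinates globally through the cardinality constraint, so $\partial\G_f$ is not separable; this makes the coordinate-by-coordinate bookkeeping harder and is where the factor $1-2\delta_{2K}^-$ (rather than the one-sided interval of Theorem~\ref{thm:statpoint:matrix}) must be extracted. I would anticipate borrowing the explicit gradient description of $\G_{\Q_2(\iota_K)}$ from the companion analysis in \cite{carlsson2020unbiased} (the vector fixed-cardinality case) and verifying that the stated gap condition forces strict positivity of $\scal{w-z',y-x'}-\delta_{2K}^-\|y-x'\|^2$, handling separately the degenerate regimes $\delta_{2K}^-\le 0$ by appealing to monotonicity of the subdifferential as in the earlier proof.
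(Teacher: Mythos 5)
Your proposal follows the paper's proof of Theorem~\ref{thm:statpoint:matrixfix} essentially verbatim: you reduce uniqueness to the strict inequality $\re\scal{W-Z',Y-X'}>\delta_{2K}^-\|Y-X'\|^2$ for all $Y\in R_K$, $Y\neq X'$, via Proposition~\ref{thm:statpoint}, pass to singular values using Proposition~\ref{propQuotg} with the sign- and permutation-invariant set $P=\{x\in\R^n:\mathrm{card}(x)\le K\}$, and derive the contradiction from the scalar gap lemma (Lemma~\ref{corbor1}). Your concluding sketch of how to prove that scalar lemma is superfluous for this theorem, since, exactly as you anticipate, the paper imports it directly from the vector companion paper \cite{carlsson2020unbiased} (proof of Theorem 5.2 there) without reproving it.
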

As in the previous section we need a result from \cite{carlsson2020unbiased}, in this case the details are found in the proof of Theorem 5.2.
 \begin{lemma}\label{corbor1}
Let $z'$ satisfy $z' \in\partial \mathcal{G}_{\iota_{K}}(x')$ with \(\emph{card}(x') \le K \) and
\begin{equation*}\tilde z_{K+1}'<(1-2\delta_{2K}^-)\tilde z_{K}'.\end{equation*}
If $y \neq x'$, $w \in\partial \mathcal{G}_{\iota_{K}}(y)$ and \(\emph{card}(y) \le K \), then $$\scal{w-z',y-x'}>\delta_{2K}^-\|y-x'\|^2.$$
\end{lemma}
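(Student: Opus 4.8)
The plan is to prove the inequality by first making the convex function $g:=\mathcal{G}_{\iota_K}$ and its subdifferential completely explicit, and then reducing the estimate to a coupled comparison of the ``kept'' and ``discarded'' coordinates, in which the gap hypothesis is consumed exactly once. Throughout, $S$ denotes the support of $x'$ and $T$ the support of $y$.

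First I would record the structure of $\partial g$. Since $2g$ is the l.s.c.\ convex envelope of $\iota_K(x)+\|x\|^2$ (cf.\ \eqref{gdef}), one has $z\in\partial g(x)$ if and only if $x$ minimizes $\mathcal{Q}_2(\iota_K)(u)+\|u-z\|^2$, i.e.\ $x$ is the best $K$-sparse approximation of $z$ obtained by keeping its $K$ largest entries (cf.\ the discussion around \eqref{hy}). Working this out (either re-deriving it from the Fenchel conjugate of $\iota_K(\cdot)+\|\cdot\|^2$, or importing it from the proof of Theorem 5.2 of \cite{carlsson2020unbiased}) gives: if $\text{card}(x)<K$ then $\partial g(x)=\{x\}$, while if $\text{card}(x)=K$ with support $S$ then
$$\partial g(x)=\{z:\ z_i=x_i\ (i\in S),\ |z_i|\le \min_{j\in S}|x_j|\ (i\notin S)\}.$$
In particular, for our $z'\in\partial g(x')$ the gap hypothesis forces $\tilde z_K'>0$ (and hence $\delta_{2K}^-<1/2$, as the hypothesis is otherwise vacuous), so $\text{card}(x')=K$; its support $S$ consists exactly of the top-$K$ indices of $z'$, with $z_i'=x_i'$ on $S$, $\min_{i\in S}|z_i'|=\tilde z_K'$, and $|z_i'|\le \tilde z_{K+1}'$ off $S$. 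Likewise $w$ satisfies $w_i=y_i$ on $T$ and $|w_i|\le\min_{j\in T}|y_j|$ off $T$.

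Next I would expand $\scal{w-z',y-x'}=\sum_i(w_i-z_i')(y_i-x_i')$ and split the indices into $S\cap T$, $S\setminus T$, $T\setminus S$ and the rest. Using $z'_S=x'_S$ and $w_T=y_T$, the indices outside $S\cup T$ contribute $0$, the $S\cap T$ block contributes $\sum(y_i-x_i')^2$, each $i\in S\setminus T$ contributes $(w_i-z_i')(-z_i')\ge (z_i')^2-|w_i||z_i'|$, and each $i\in T\setminus S$ contributes $(y_i-z_i')y_i\ge y_i^2-\tilde z_{K+1}'|y_i|$. Writing $P=\sum_{S\setminus T}(z_i')^2$ and $Q=\sum_{T\setminus S}y_i^2$, and noting that $\|y-x'\|^2=\sum_{S\cap T}(y_i-x_i')^2+P+Q$, it then suffices to prove
$$(1-\delta_{2K}^-)(P+Q)\ \ge\ \sum_{S\setminus T}|w_i||z_i'|+\tilde z_{K+1}'\sum_{T\setminus S}|y_i|,$$
for the $S\cap T$ block leaves the nonnegative surplus $(1-\delta_{2K}^-)\sum_{S\cap T}(y_i-x_i')^2$.

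The main work, and the step where the hypothesis is used, is this last inequality, which couples the two ``off-diagonal'' blocks. When $\text{card}(y)=K$ the blocks $S\setminus T$ and $T\setminus S$ have the same cardinality $p$, and the two elementary observations $\tilde z_K'\sqrt p\le\sqrt P$ (each kept $|z_i'|\ge\tilde z_K'$) and $\beta\sqrt p\le\sqrt Q$ with $\beta:=\min_{j\in T}|y_j|$ (each $|y_i|\ge\beta$ on $T$) combine with Cauchy--Schwarz and $|w_i|\le\beta$ to give $\sum_{S\setminus T}|w_i||z_i'|\le\beta\sqrt p\,\sqrt P\le\sqrt P\sqrt Q$ and $\tilde z_{K+1}'\sum_{T\setminus S}|y_i|\le \tilde z_{K+1}'\sqrt p\,\sqrt Q\le(1-2\delta_{2K}^-)\sqrt P\sqrt Q$, the gap hypothesis entering in the last estimate. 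Adding these and using $\sqrt P\sqrt Q\le\tfrac12(P+Q)$ produces exactly $(1-\delta_{2K}^-)(P+Q)$ on the right, since $\tfrac12+\tfrac{1-2\delta_{2K}^-}{2}=1-\delta_{2K}^-$. Strictness follows because $y\neq x'$ forces either the $S\cap T$ block to be positive or $p\ge1$ (hence $P,Q>0$), and the strict gap inequality then makes the corresponding step strict; the case $\text{card}(y)<K$ is easier since $w=y$ annihilates $\sum_{S\setminus T}|w_i||z_i'|$, and the same chain closes without the cardinality-matching of the blocks. Note that this argument is valid for every $\delta_{2K}^-<1/2$ (negative values only weaken the gap condition), so no separate treatment of $\delta_{2K}^-\le 0$ is needed. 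I expect the only genuinely delicate point to be pinning down $\partial g$ at the boundary, i.e.\ at ties between the $K$-th and $(K{+}1)$-th largest magnitudes; the strict gap hypothesis is precisely what rules these out and fixes the support $S$ unambiguously.
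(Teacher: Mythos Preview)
Your argument is correct. The paper itself does not prove this lemma in text; it merely states that the details are found in the proof of Theorem~5.2 of \cite{carlsson2020unbiased}. Your self-contained route---making $\partial\mathcal{G}_{\iota_K}$ explicit on $K$-sparse vectors, splitting $\langle w-z',y-x'\rangle$ over $S\cap T$, $S\setminus T$, $T\setminus S$, and then closing with Cauchy--Schwarz, the block-cardinality matching $|S\setminus T|=|T\setminus S|$ (when $\text{card}(y)=K$), and AM--GM so that the gap hypothesis enters exactly through $\tfrac12+\tfrac{1-2\delta_{2K}^-}{2}=1-\delta_{2K}^-$---is the natural decomposition and checks out, including strictness and the degenerate case $\text{card}(y)<K$. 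Your observation that the hypothesis already forces $\tilde z_K'>0$ and $\delta_{2K}^-<1/2$, so that no separate treatment of $\delta_{2K}^-\le 0$ is needed, is a pleasant contrast with the proof of Theorem~\ref{thm:statpoint:matrix}, where that case had to be handled on its own.
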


\begin{proof}[Proof of Theorem \ref{thm:statpoint:matrixfix}]
The result follows by Proposition \ref{thm:statpoint} if we show that \[ \langle W - Z', Y - X' \rangle > \delta_{2K}^{-} \|Y - X' \|^2;      \] holds for all $Y \in R_K$ with $Y\neq X'$ and $W \in\partial \G_{\iota_{R_K}}(Y)$.	If not then \[ \underset{Y \in R_K}{\underset{Y \ne X'}{\inf_{ Y \in \mathbb{M}_n }}} \inf_{W \in \partial \G_{\iota_{R_K}}(Y)}   \frac{\langle W - Z', Y - X' \rangle}{\|Y - X' \|^2  } \le \delta_{2K}^{-}     \] and thus Proposition \ref{propQuotg} yields that there exists $y \neq x'$ with $\text{card}(y)\leq K$ and $w \in \partial \G_{\iota_{K}}(y)$ such that \[ \frac{\scal{w-z',y-x'}}{\|y-x'\|^2} \le \delta_{2K}^{-} , \] which contradicts Lemma \ref{corbor1}.
\end{proof}

If we add the assumption $\|\A\|<1$ we can easily prove that the above sparse minimizer is the global minimizer as well.

\begin{theorem}\label{t6}
Suppose that \( \|\mathcal{A}\| < 1 \). Then there exists a global minimizer \(X'\) of \( \mathcal{R}_{K,reg}  \); if \( Z' \) given by \eqref{y} satisfies \eqref{e4fix567}, then \( X'  \) is {unique} and there are no other local minimizers either. 	
\end{theorem}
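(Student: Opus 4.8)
The plan is to split the statement into two parts: existence of a global minimizer (from $\|\A\|<1$) and the conditional uniqueness together with the absence of \emph{any} other local minimizer (from \eqref{e4fix567}), treating the latter as the main content. Throughout I would use \eqref{l00001f}, i.e.~$\Q_2(\iota_{R_K})(X)=\Q_2(\iota_K)(\sigma(X))$, to move freely between the matrix functional and its spectral counterpart.

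For existence I would run the coercivity-plus-compactness scheme already employed in the proof of Theorem \ref{glass}. Since $\Q_2(\iota_{R_K})$ is l.s.c.~and $\|\A X-b\|^2$ is continuous, $\mathcal{R}_{K,reg}$ is l.s.c., so it suffices to show minimizing sequences are bounded. The elementary lower bound $\Q_2(\iota_K)(x)=\S_2(\S_2(\iota_K))(x)\ge \sum_{j>K}x_{(j)}^2$, obtained by evaluating the outer supremum at the competitor $y=x$ (recall $\S_2(\iota_K)(y)=-\sum_{j>K}y_{(j)}^2$), gives $\Q_2(\iota_{R_K})(X)\ge\sum_{j>K}\sigma_j(X)^2$ and hence controls the tail singular values. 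Writing $X=A+B$ with $A$ the best rank-$K$ approximation, a sequence with $\|X_n\|\to\infty$ and bounded objective would force the tail energy to stay bounded, so $\|A_n\|\to\infty$ with $\rank(A_n)\le K$; then $\|\A X_n-b\|\ge\sqrt{1-\delta_K^-}\,\|A_n\|-\|\A\|\|B_n\|-\|b\|\to\infty$, a contradiction. This uses $\delta_K^-\le\delta_{2K}^-<1$, which is available (and automatic once \eqref{e4fix567} is in force, as that inequality forces $\delta_{2K}^-<1/2$). Compactness of bounded sets then yields a minimizer $X'$.

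The conceptual heart is the reduction to rank $\le K$. The key observation is that, since $\|\A\|<1$, \emph{every} local minimizer of $\mathcal{R}_{K,reg}$ has rank at most $K$: by Theorem \ref{celo1} any local minimizer $X''$ of $\mathcal{R}_{K,reg}$ is also a local minimizer of $\mathcal{R}_K=\iota_{R_K}(X)+\|\A X-b\|^2$ with $\mathcal{R}_K(X'')=\mathcal{R}_{K,reg}(X'')<\infty$, and finiteness forces $\iota_{R_K}(X'')=0$, i.e.~$\rank(X'')\le K$. In particular $X'$ from the previous step has rank $\le K$, and being a (global, hence local) minimizer it is a stationary point of $\mathcal{R}_{K,reg}$ in the sense of Section \ref{sec:unique}. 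Now I would form $Z'$ from $X'$ via \eqref{y}; by hypothesis it satisfies \eqref{e4fix567}, so Theorem \ref{thm:statpoint:matrixfix} applies to the rank-$\le K$ stationary point $X'$ and guarantees that there is no other stationary point of rank $\le K$. Combining the two facts, any local minimizer of $\mathcal{R}_{K,reg}$ is a rank-$\le K$ stationary point and must therefore coincide with $X'$; hence $X'$ is the unique local minimizer and, a fortiori, the unique global minimizer, which is exactly the claim.

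I expect the only genuinely delicate step to be the coercivity estimate, because $\mathcal{R}_{K,reg}$ fails to be coercive along rank-$\le K$ directions lying in the (near-)kernel of $\A$ — precisely the ill-posed phenomenon flagged in Section \ref{sec oracle} — so ruling this out is what truly requires $\delta_K^-<1$ rather than merely $\|\A\|<1$. By contrast, the potentially worrying possibility of spurious higher-rank local minima is disposed of cleanly and for free by Theorem \ref{celo1}, and uniqueness among the low-rank critical points is exactly what Theorem \ref{thm:statpoint:matrixfix} already delivers, so once the reduction is in place the remainder is immediate.
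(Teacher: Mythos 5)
Your proof is correct and follows essentially the same route as the paper's: uniqueness rests on exactly the same two ingredients (Theorem \ref{celo1} forcing every local minimizer of $\mathcal{R}_{K,reg}$ to have rank at most $K$, and Theorem \ref{thm:statpoint:matrixfix} excluding any second rank-$\leq K$ stationary point), merely applied in the reverse order to the paper's contradiction argument. The only variation is the existence step, where you prove coercivity of $\mathcal{R}_{K,reg}$ directly via the envelope bound $\Q_2(\iota_K)(x)\geq\sum_{j>K}x_{(j)}^2$, whereas the paper invokes the compactness argument of Section \ref{sec:restrictiononA} for $\mathcal{R}_K$ and transfers minimizers via Theorem \ref{celo1} --- both versions, as you rightly flag, rest on the implicit assumption $\delta_K^-<1$, which the stated hypothesis $\|\A\|<1$ alone does not supply.
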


\begin{proof} That $\mathcal{R}_K$ has a minimizer is shown via a simple compactness argument using $\delta_{2K}^{-} \ge \delta_K^{-}$, which we did already in Section \ref{sec:restrictiononA}. By Theorem \ref{celo1} any such minimizer is also a minimizer of $\mathcal{R}_{K, reg}$, and vice versa.

Now assume that $Z'$ satisfies \eqref{e4fix567} as stipulated. If \(Y\) is another local minimizer, then \( \text{rank}(Y) > K \) by Theorem \ref{thm:statpoint:matrixfix}. But then we have \( \mathcal{R}_{K,reg}(Y)=\mathcal{R}_K(Y)=+\infty  \) by Theorem \ref{celo1}, a contradiction.
\end{proof}

\subsection{Noisy data.}

In this final section we consider the case when $b=\A(X_0)+\ep$ for a low rank $X_0$.

\begin{theorem}\label{t7}
	Suppose that $b=\A X_0+\ep$,  $\|\A\|<1$, \( \delta_{2K}^{-} < 1/2 \) and $\emph{rank} (X_0)=K$. Assume that
	\[ \sigma_K(X_0) > \left(  \frac{5}{ (1 - 2 \delta_{2K}^{-})^{3/2}  }    \right)\|\epsilon\|.      \]  Then the best rank $K$ solution $X_B$ is the unique global minimum to $\mathcal{R}_{K,reg}$ and there are no other local minimizers. Moreover \[ \|X_B-X_0\| \le 2\|\epsilon\| / \sqrt{1-\delta_{2K}^{-}}.  \]
\end{theorem}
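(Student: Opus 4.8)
The plan is to reduce everything to verifying the gap condition \eqref{e4fix567} for the point $Z'$ attached to a global minimizer, so that Theorem \ref{t6} can be invoked; the overall architecture mirrors the proof of Theorem \ref{glass}, with the hard-thresholding estimates for $\Q_2(\mu\text{rank})$ replaced by the analogous ones for $\Q_2(\iota_{R_K})$. First I would fix a global minimizer $X'$ of $\mathcal{R}_{K,reg}$, which exists by the first part of Theorem \ref{t6}; by Theorem \ref{celo1} it is also a minimizer of $\mathcal{R}_K$, hence a best rank $K$ solution, so $\text{rank}(X')\le K$ and $\|\A X'-b\|\le\|\A X_0-b\|=\|\epsilon\|$. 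Writing $\A(X'-X_0)=(\A X'-b)+\epsilon$ and applying the LRIP to the rank $\le 2K$ matrix $X'-X_0$ gives $\sqrt{1-\delta_{2K}^-}\,\|X'-X_0\|\le\|\A(X'-X_0)\|\le 2\|\epsilon\|$, which is exactly the announced estimate $\|X'-X_0\|\le 2\|\epsilon\|/\sqrt{1-\delta_{2K}^-}$.

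Next I would locate the singular values of $Z'=(I-\A^*\A)X'+\A^*b$. Using the singular-value perturbation bound $|\sigma_j(X')-\sigma_j(X_0)|\le\|X'-X_0\|$ together with the previous estimate shows $\sigma_K(X')\ge\sigma_K(X_0)-2\|\epsilon\|/\sqrt{1-\delta_{2K}^-}>0$, so in fact $\text{rank}(X')=K$. From $Z'-X'=-\A^*\A(X'-X_0)+\A^*\epsilon$ and $\|\A\|<1$ I get $\|Z'-X'\|\le\|X'-X_0\|+\|\epsilon\|\le 3\|\epsilon\|/\sqrt{1-\delta_{2K}^-}$. The upper bound on $\sigma_{K+1}(Z')$ then uses only $\sigma_{K+1}(X')=0$ and perturbation, giving $\sigma_{K+1}(Z')\le\|Z'-X'\|\le 3\|\epsilon\|/\sqrt{1-\delta_{2K}^-}$. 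For the lower bound on $\sigma_K(Z')$ I would exploit stationarity: $Z'\in\partial\G_{\iota_{R_K}}(X')$, so by Lemma \ref{l3g}, $\sigma(Z')\in\partial\G_{\iota_K}(\sigma(X'))$, and the description of this subdifferential (as in the proof of Theorem 5.2 of \cite{carlsson2020unbiased}) forces the $K$ retained singular values to be unchanged, i.e. $\sigma_j(Z')=\sigma_j(X')$ for $j\le K$. Hence $\sigma_K(Z')=\sigma_K(X')\ge\sigma_K(X_0)-2\|\epsilon\|/\sqrt{1-\delta_{2K}^-}$.

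Combining the two bounds, writing $\delta=\delta_{2K}^-$, the gap condition \eqref{e4fix567} will follow once $3\|\epsilon\|/\sqrt{1-\delta}<(1-2\delta)\bigl(\sigma_K(X_0)-2\|\epsilon\|/\sqrt{1-\delta}\bigr)$, equivalently $\sigma_K(X_0)>(5-4\delta)\|\epsilon\|/\bigl(\sqrt{1-\delta}\,(1-2\delta)\bigr)$. I would then check that the hypothesis $\sigma_K(X_0)>5\|\epsilon\|/(1-2\delta)^{3/2}$ implies this; after clearing the common factor $(1-2\delta)$ it reduces to the elementary inequality $5\sqrt{1-\delta}\ge(5-4\delta)\sqrt{1-2\delta}$ on $[0,1/2)$, and squaring turns it into $\delta\bigl(65-96\delta+32\delta^2\bigr)\ge 0$, which holds since the quadratic factor is positive on $[0,1/2)$ (its roots exceed $1$). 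With \eqref{e4fix567} verified for $X'$, Theorem \ref{t6} yields that $X'$ is the unique global minimizer of $\mathcal{R}_{K,reg}$ with no other local minimizers; being the unique best rank $K$ solution it is $X_B$, and the estimate from the first step is the stated bound.

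The main obstacle is the lower bound $\sigma_K(Z')=\sigma_K(X')$. The naive perturbation estimate $\sigma_K(Z')\ge\sigma_K(X')-\|Z'-X'\|$ loses an extra $\|Z'-X'\|$ and would force the constant $8$ in place of $5$ at $\delta=0$; matching the sharp constant in the theorem genuinely requires the exact on-support identity coming from the subdifferential of $\G_{\iota_K}$, which in turn relies on $\text{rank}(X')=K$ established above. Once that identity is in hand the remaining work is the routine algebraic verification sketched in the third paragraph.
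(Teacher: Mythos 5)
Your proposal is correct and follows essentially the same route as the paper's own proof: existence and the bound $\|X'-X_0\|\le 2\|\epsilon\|/\sqrt{1-\delta_{2K}^{-}}$ via Theorem \ref{t6} and the LRIP, the on-support identity $\sigma_j(Z')=\sigma_j(X')$ for $j\le K$ (the paper's Lemma \ref{ld}, which you re-derive from Lemma \ref{l3g} and the vector subdifferential, and which in fact only needs $\mathrm{rank}(X')\le K$, not exactly $K$) combined with Hoffman--Wielandt for the lower bound on $\sigma_K(Z')$, the perturbation bound $\sigma_{K+1}(Z')\le\|Z'-X'\|\le 3\|\epsilon\|/\sqrt{1-\delta_{2K}^{-}}$ for the upper bound, and finally Theorem \ref{t6} for uniqueness and absence of other local minimizers. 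The only (harmless) deviation is your concluding algebra, which tracks the sharper threshold $(5-4\delta)\|\epsilon\|/\bigl(\sqrt{1-\delta}\,(1-2\delta)\bigr)$ and verifies a polynomial inequality, whereas the paper absorbs the term $2\|\epsilon\|/\sqrt{1-\delta_{2K}^{-}}$ using the cruder estimate $(1-2\delta_{2K}^{-})^{3/2}\le\sqrt{1-\delta_{2K}^{-}}$.
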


We first need a lemma.
\begin{lemma}\label{ld}
	Given \( Y \in R_K \) we have \( W \in \partial \mathcal{G}_F (Y)  \) if and only if \( \sigma_j (W) = \sigma_j (Y)   \) for \(j =1, \dots , K   \).
\end{lemma}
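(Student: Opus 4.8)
The plan is to reduce the matrix statement to a scalar statement about $\mathcal{G}_f$ with $f=\iota_K$, and then compute the subdifferential of $\mathcal{G}_f$ explicitly. By Lemma \ref{l3g} we have $\mathcal{G}_F(X)=\mathcal{G}_f(\sigma(X))$ and, for any SVD $Y=U\Lambda_{\sigma(Y)}V^*$, the identity $\partial\mathcal{G}_F(Y)=U\Lambda_{\partial\mathcal{G}_f(\sigma(Y))}V^*$ (this is where Lewis' theorem, Corollary 2.5 in \cite{lewis1995convex}, enters). Thus $W\in\partial\mathcal{G}_F(Y)$ precisely when $W=U\Lambda_w V^*$ for some $w\in\partial\mathcal{G}_f(\sigma(Y))$, in which case $\sigma_j(W)=|w|^\downarrow_j$, the $j$th largest modulus of the entries of $w$. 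Writing $y=\sigma(Y)$, a non-increasing non-negative vector with $\text{card}(y)\le K$ since $\text{rank}(Y)\le K$, the lemma reduces to the scalar claim that $w\in\partial\mathcal{G}_f(y)$ forces $|w|^\downarrow_j=y_j$ for $j=1,\dots,K$, together with its converse within the class of matrices sharing the SVD of $Y$.

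For the scalar computation I would pass to the convex conjugate. Since $2\mathcal{G}_f$ is by definition the l.s.c. convex envelope of $h:=\iota_K+\|\cdot\|^2$, a short calculation gives $h^*(v)=\tfrac14\max_{|S|=K}\sum_{j\in S}v_j^2$, a maximum of smooth convex functions over the $K$-subsets $S$. Consequently $w\in\partial\mathcal{G}_f(y)$ iff $y\in\partial h^*(2w)$, and the subdifferential of a pointwise maximum yields $\partial h^*(2w)=\text{conv}\{\,w\odot\mathbf 1_S:\ S \text{ a top-}K\text{ index set of } w\,\}$. Hence $w\in\partial\mathcal{G}_f(y)$ exactly when $y=\sum_S\lambda_S\,(w\odot\mathbf 1_S)$ for convex weights $\lambda_S$ supported on top-$K$ sets of $w$; equivalently $y_j=\mu_j w_j$ with $\mu_j=\sum_{S\ni j}\lambda_S\in[0,1]$, where $\mu_j=1$ for indices strictly inside the top-$K$ block and $\mu_j=0$ for indices strictly below it.

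The core of the argument is then a counting step showing that $K$-sparsity of $y$ rules out any genuine shrinkage $\mu_j<1$ on $\text{supp}(y)$. Let $\tau=|w|^\downarrow_K$, let $A$ collect the indices with $|w_j|>\tau$ and $B$ those with $|w_j|=\tau$; every active top-$K$ set is $A$ together with a $(K-|A|)$-subset of $B$, so $\sum_{j\in B}\mu_j=K-|A|$. If $\tau>0$ then each nonzero $y_j$ needs $\mu_j>0$, and since at most $\text{card}(y)-|A|\le K-|A|$ of the $\mu_j$ (for $j\in B$) can be nonzero while summing to $K-|A|$ with each $\le 1$, they must all equal $1$; thus $w=y$ on $\text{supp}(y)$ and $|w|^\downarrow_j=y_j$ for $j\le K$, with $\text{card}(y)=K$. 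If instead $\text{card}(y)<K$, the same count forces $\tau=0$, whence $w=y$ and again $|w|^\downarrow_j=y_j$ for $j\le K$ (the surplus indices contributing zeros on both sides). Conversely, given $w$ with $w_{1:K}=y_{1:K}$, the choice $\lambda_{\{1,\dots,K\}}=1$ exhibits $y\in\partial h^*(2w)$, the tail bound $|w_j|\le y_K$ for $j>K$ being automatic from the ordering of singular values. Translating back through Lemma \ref{l3g} gives $\sigma_j(W)=\sigma_j(Y)$ for $j\le K$ for every $W\in\partial\mathcal{G}_F(Y)$, and conversely any $W=U\Lambda_w V^*$ sharing the SVD of $Y$ with matching top-$K$ singular values lies in $\partial\mathcal{G}_F(Y)$.

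The step I expect to be most delicate is precisely this counting/no-shrinkage argument, because of the ties at the $K$th largest modulus (where the top-$K$ set is non-unique and $\mathcal{G}_f$ is non-smooth) and the rank-deficient case $\text{rank}(Y)<K$, which must be treated separately and is exactly where $K$-sparsity forces $\tau=0$. A secondary point worth stating carefully is that the ``only if'' direction rests on Lewis' description of $\partial\mathcal{G}_F$, so that the subgradients automatically share an ordered SVD with $Y$; this is what makes equality of the top-$K$ singular values both necessary and, on that class, sufficient.
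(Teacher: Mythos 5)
Your proof is correct, and it shares with the paper only its first step: the reduction to the scalar functional $\mathcal{G}_f$ with $f=\iota_K$ via Lemma \ref{l3g} (i.e.~Lewis' description of $\partial\mathcal{G}_F$). From there the paper's proof is a one-line citation of Lemma 5.3 of \cite{carlsson2020unbiased}, whereas you rederive the scalar statement self-containedly: the conjugate $h^*(v)=\frac14\max_{|S|=K}\sum_{j\in S}v_j^2$ of $h=\iota_K+\|\cdot\|^2$, the inversion $w\in\partial\mathcal{G}_f(y)\Leftrightarrow y\in\partial h^*(2w)$ (legitimate since $h$ is coercive, so $2\mathcal{G}_f=h^{**}$), the max-rule for the finitely many smooth pieces, and then the counting argument on the tie set $B$; the key identity $\sum_{j\in B}\mu_j=K-|A|$ with each $\mu_j\le 1$ and at most $\mathrm{card}(y)-|A|\le K-|A|$ of them nonzero does force $\mu_j\in\{0,1\}$, and your case split (ties at the $K$th modulus versus $\mathrm{card}(y)<K$ forcing $\tau=0$, hence $w=y$) is sound. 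The paper's route buys brevity and uniformity with its vector predecessor; yours buys self-containedness and, more importantly, makes explicit the correct reading of the ``if and only if''. Indeed, the converse as literally stated in the lemma fails for arbitrary $W$: with $K=1$ and $Y=\mathrm{diag}(1,0)$ one computes $\partial\mathcal{G}_f\big((1,0)\big)=\{(1,t):|t|\le 1\}$ and hence $\partial\mathcal{G}_F(Y)=\{\mathrm{diag}(1,c):|c|\le 1\}$, so $W=\mathrm{diag}(0,1)$ satisfies $\sigma_1(W)=\sigma_1(Y)$ yet $W\notin\partial\mathcal{G}_F(Y)$. Your formulation --- that equality of the top-$K$ singular values is necessary for all subgradients and sufficient on the class of $W=U\Lambda_w V^*$ sharing a simultaneous SVD with $Y$ --- is the statement that is actually true, and since the paper only ever invokes the forward implication (in the proof of Theorem \ref{t7}), your more careful version loses nothing.
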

\begin{proof} By Lemma \ref{l3g} we have \(W \in \partial \mathcal{G}_F(Y) \) if and only if \( \sigma(W) \in \partial \mathcal{G}_f (\sigma(Y))  \) and the conclusion follows from of Lemma 5.3 of \cite{carlsson2020unbiased}.
	
\end{proof}

\begin{proof}[Proof of Theorem \ref{t7}] The existence of a global minimum $X'$ follows by Theorem \ref{t6}, and due to the simple structure of $\mathcal{R}_{K}$ it is immediate that $X'=X_B$. The estimate on $\|X'-X_0\|$ follows by the simple computation: \[ \begin{split} &\|\epsilon\| = \sqrt{ \mathcal{R}_K (X_0)} \ge \sqrt{ \mathcal{R}_K (X')}  = \|\mathcal{A}(X'-X_0) - \epsilon\| \ge \\ &  | \|\mathcal{A}(X'-X_0)\| - \|\epsilon\| | \ge \|\mathcal{A}(X'-X_0)\| - \|\epsilon\| \ge \sqrt{1-\delta_{2K}^{-}} \|X'-X_0\| - \|\epsilon\|.  \end{split}  \] It remains to verify uniqueness, which follows by Theorem \ref{t6} once we prove that \eqref{e4fix567} applies to $Z'$ (given by \eqref{y}). First of all we notice that Hoffman-Wielandt inequality gives $ \|X'-X_0\| \ge |\sigma_K (X') - \sigma_K(X_0)| $ so $\sigma_K(X') > \sigma_K (X_0) - \frac{2}{\sqrt{1-\delta_{2K}^{-}}} \| \epsilon \|$ and therefore \[ \sigma_K(Z') > \sigma_K (X_0) - \frac{2}{\sqrt{1-\delta_{2K}^{-}}} \| \epsilon \|   \] by Lemma \ref{ld}. Moreover, the last lines of the proof of Theorem \ref{glass} gives an estimate for \( \sigma_{K+1}(Z')  \), i.e. \[\sigma_{K+1}(Z') \le \frac{3}{\sqrt{1-\delta_{2K}^{-}}} \|\epsilon\|,  \] (which applies in the present setting as well). The hypothesis \[ \sigma_K(X_0) > \left(  \frac{5}{  (1 - 2 \delta_{2K}^{-})^{3/2}  }    \right)\|\epsilon\|  \] combined with these two estimates gives \[ \frac{\sigma_{K+1}(Z')}{1 - 2\delta_{2K}^{-}} \le \frac{3\|\epsilon\|}{ (1 - 2 \delta_{2K}^{-})^{3/2}} < \frac{5\|\epsilon\|}{ (1 - 2 \delta_{2K}^{-})^{3/2}}-\frac{2\|\epsilon\|}{\sqrt{1-\delta_{2K}^{-}}} < \sigma_K(Z') \] which is \eqref{e4fix567}, and the proof is complete.

\end{proof}

\section{Minimization and proximal operators}\label{prox_alg}

Before getting to some numerical tests, let us discuss implementation issues. Both algorithms FBS (Forward-Backward Splitting) and ADMM (Alternating Directions Method of Multipliers) are capable of finding stationary points of \eqref{fr21} and \eqref{fr21f}, according to our numerical observations. It seems that the theory supporting this claim is more developed for the case of FBS. In \cite{attouch2013convergence} it is shown that FBS generates sequences that either diverge to $\infty$ or else converge to a stationary point, for semi-algebraic functionals. The functionals $\Q_2(\mu \text{rank})$ and $\Q_2(\iota_K)$ are semi-algebraic, which follows by Theorem 6.1 in \cite{carlsson2018convex} along with the fact that the singular values are semi-algebraic functions of the matrix entries. For the case of ADMM the theory in the non-convex case is more unclear; on one hand there are examples where ADMM diverges \cite{li2015global}, on the other the article \cite{wang2019global} gives conditions under which (some alteration of) ADMM converges to a stationary point. The latter also has a long list of concrete settings where ADMM has been reported to converge. We can only add to this list, we have never encountered a situation where ADMM diverges and moreover we have run extensive tests with ADMM and FBS on the same problem, observing that they seem to find the same point. A benefit with ADMM over FBS is that it allows also to incorporate linear constraints. In order to use either ADMM or FBS we need to be able to compute the proximal operators of $\Q_2(\mu\text{rank})$ and $\Q_2(\iota_{R_K})$ respectively. The details of how to do this is found e.g.~in \cite{larsson-olsson-ijcv-2016}, but we repeat it below for completeness. The code is also available in MatLab at the following GitHub repository:

\url{https://github.com/Marcus-Carlsson/Quadratic-Envelopes.}

We remind the reader that proximal operators are defined as $$\text{prox}_{\Q_2(F)/\rho}(X)=\argmin_{Y}\Q_2(F)(X)+\frac{\rho}{2}\|X-Y\|^2.$$ 	
First of all we provide a result to connect the proximal operator of \( \Q_{2}(f) \) to the proximal operator of \( \Q_{2}(F) \) with \( F(X)=f(\sigma(X)) \), being \( \sigma(X) \) the singular values vector of a matrix \( X \in \mathbb{M}_{n_1, n_2} \). The following theorem is taken from \cite{Carlssonphase2019}, (Proposition 2.1).
	
\begin{proposition} \label{liftprox} Let $f$ be a permutation and sign invariant $[0,\infty]$-valued function on $\mathbb{R}^n$ and set $F(X)=f(\sigma(X))$. Then for $\rho > 2$
		\begin{equation*}
		\emph{prox}_{\Q_2(F)/\rho}(X)=U \emph{diag}(\emph{prox}_{\Q_2(f)/\rho}(\sigma(X)))V^*
		\end{equation*}
		where $U \emph{diag}(\sigma(X)) V^*$ is the singular value decomposition of $X$.
	\end{proposition}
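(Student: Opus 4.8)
The plan is to reduce the matrix proximal problem to the scalar one by means of von Neumann's trace inequality, in exactly the spirit of the supremum computation already carried out in the proof of Proposition \ref{l2g}. Write $P(Y)=\Q_2(F)(Y)+\frac{\rho}{2}\|Y-X\|^2$ for the objective whose minimizer defines $\prox_{\Q_2(F)/\rho}(X)$. First I would invoke Proposition \ref{l2g} to replace $\Q_2(F)(Y)$ by $\Q_2(f)(\sigma(Y))$, and then expand the penalty as $\frac{\rho}{2}\|Y-X\|^2=\frac{\rho}{2}\|\sigma(Y)\|^2-\rho\,\re\langle Y,X\rangle+\frac{\rho}{2}\|\sigma(X)\|^2$, using $\|Y\|=\|\sigma(Y)\|$ and $\|X\|=\|\sigma(X)\|$. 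The only part of $P(Y)$ that depends on $Y$ beyond its singular values is then the cross term $-\rho\,\re\langle Y,X\rangle$.

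Next I would fix the singular values of $Y$ and minimize over its singular vectors. Since $\Q_2(f)(\sigma(Y))$ and the two norm-squared terms are invariant under this choice, minimizing $P$ amounts to maximizing $\re\langle Y,X\rangle$. By von Neumann's trace inequality this maximum equals $\sum_j\sigma_j(Y)\sigma_j(X)$ (both tuples ordered decreasingly) and is attained precisely when $Y$ shares the singular vectors $U,V$ of $X$, i.e. $Y=U\diag(\xi)V^*$ with $\xi=\sigma(Y)$. Substituting this back collapses $P$ to the scalar objective $\Q_2(f)(\xi)+\frac{\rho}{2}\|\xi-\sigma(X)\|^2$, whose minimizer is by definition $\prox_{\Q_2(f)/\rho}(\sigma(X))$. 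This produces the asserted formula.

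Two points require care. For the von Neumann equality case to be available at the optimum, I must know that the scalar minimizer $\xi^\ast=\prox_{\Q_2(f)/\rho}(\sigma(X))$ can be taken in $\R^n_{\geq}$ with nonnegative entries ordered like $\sigma(X)$; this follows from a rearrangement argument together with the absolute symmetry of $\Q_2(f)$ (Lemma \ref{l1g}) and the fact that $\sigma(X)$ has nonnegative, non-increasing entries, so that $\diag(\xi^\ast)$ is a legitimate matrix of singular values compatible with the SVD $X=U\diag(\sigma(X))V^*$. Second, I would use the hypothesis $\rho>2$ to guarantee well-posedness: since $\gamma=2$, the function $\Q_2(g)(\cdot)+\|\cdot\|^2$ is convex for any l.s.c.\ $g$, hence $\Q_2(f)(\xi)+\frac{\rho}{2}\|\xi\|^2$ (and likewise $P$ itself) is strictly convex for $\rho>2$, so both proximal operators are single-valued and the identification above is an equality of points rather than of sets.

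The main obstacle is the equality-case analysis of von Neumann's inequality: one must verify not merely that some shared-singular-vector $Y$ attains the bound, but that every minimizer of $P$ has this form, which in general requires tracking the ordering and the possible multiplicity of singular values. This is handled cleanly by first passing to the ordered scalar problem and then invoking the strict convexity coming from $\rho>2$ to pin down the unique minimizer, mirroring the trace-inequality argument already used for $\S_2$ in Proposition \ref{l2g}.
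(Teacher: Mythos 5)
Your proof is correct, but note that the paper contains no proof of Proposition \ref{liftprox} to compare against: the result is imported verbatim from \cite{Carlssonphase2019} (Proposition 2.1). Your argument therefore supplies a self-contained derivation from the paper's own machinery, and it is the natural one, mirroring the proof of Proposition \ref{l2g}: write $P(Y)=\Q_2(F)(Y)+\frac{\rho}{2}\|Y-X\|^2$, use Proposition \ref{l2g} and $\|Y\|=\|\sigma(Y)\|$ so that, for fixed $\sigma(Y)$, only the cross term $-\rho\,\re{\scal{Y,X}}$ depends on the singular vectors, and apply von Neumann's trace inequality. Your three-step structure is sound: (i) the lower bound $P(Y)\geq \Q_2(f)(\sigma(Y))+\frac{\rho}{2}\|\sigma(Y)-\sigma(X)\|^2\geq \min_{\xi}\big(\Q_2(f)(\xi)+\frac{\rho}{2}\|\xi-\sigma(X)\|^2\big)$; (ii) attainment at $Y^*=U\Lambda_{\xi^*}V^*$, where the rearrangement step you flag is exactly right --- by Lemma \ref{l1g} the value $\Q_2(f)(\xi)$ is unchanged when $\xi$ is replaced by the non-increasing rearrangement of $(|\xi_1|,\dots,|\xi_n|)$, while $\scal{\xi,\sigma(X)}$ can only increase since $\sigma(X)\in\R^n_{\geq}$ is nonnegative, so the scalar minimizer $\xi^*$ may be taken in $\R^n_{\geq}$ and $\Lambda_{\xi^*}$ is then a genuine matrix of singular values compatible with the SVD of $X$; and (iii) uniqueness, since $P = 2\G_F + \left(\tfrac{\rho}{2}-1\right)\|\cdot\|^2 - \rho\,\re{\scal{\cdot\,,X}} + c$ with $2\G_F$ convex (it is the l.s.c.\ convex envelope of $F+\|\cdot\|^2$), hence $P$ is strictly convex precisely because $\rho>2$. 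As you observe, this structure correctly sidesteps the delicate equality-case and multiplicity analysis of von Neumann's inequality: you never need that \emph{every} maximizer of the cross term shares singular vectors with $X$, only that one minimizer of $P$ has the stated form, with strict convexity doing the rest; the same uniqueness also shows the formula is independent of the (non-unique) choice of $U$ and $V$ in the SVD of $X$.
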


We give now formulas for \( \text{prox}_{\Q_{2} (\mu \text{card}) } \) and \( \text{prox}_{\Q_{2} (\iota_K) } \); the corresponding for \( \text{prox}_{\Q_{2} (\mu \text{rank}) } \) and \( \text{prox}_{\Q_{2} (\iota_{P_k}) } \) will follow from Proposition \ref{liftprox}:
\begin{equation} \label{proxcard}
(\text{prox}_{\Q_{2}(\mu \text{card} ) / \rho } (y))_i = \begin{cases} y_i & \text{if } |y_i| \ge \sqrt{\mu} \\ \frac{\rho y_i - 2 \sqrt{\mu} \text{sign}(y_i) }{\rho -2} & \text{if } 2 \sqrt{\mu}/\rho \le |y_i| \le \sqrt{\mu} \\ 0 & \text{if } |y_i| \le 2\sqrt{\mu} / \rho. \end{cases}
\end{equation} \( \text{prox}_{\Q_{2} (\iota_K) } \) is much more complicated to compute; the details are found in \cite{larsson-olsson-ijcv-2016} and \cite{Carlssonphase2019}. The  Read Me file of the GitHub repository also has a step by step description of the code that can be used for implementing in any other programming language.

We finally describe the Forward-Backward Splitting algorithm specialized to our two functionals:
	\newline \newline

\begin{algorithm}[H]
		\SetAlgoLined
		\KwResult{Stationary point of \( \mathcal{R}_{\mu,reg} \) or \( \mathcal{R}_{K,reg} \)}
		\(X_1 = 0\) (\(n_1 \times n_2\) zero matrix)\;
		\While{not converged}{
			\(\widehat{X}_{k+1} = X_k - 2 \A^* (\A X_k - b )/\rho\)\;
			\( \widehat{X}_{k+1} = U_{k+1} \text{diag}(\sigma(\widehat{X}_{k+1})) V^* _{k+1}  \) (singular value decomposition)\;
			\( X_{k+1} = U_{k+1} \text{diag}(\text{prox}_{ \Q_{2} (\mu \text{card})/\rho } (\sigma(\widehat{X}_{k+1}) ) V_{k+1} ^*  \)\;
			\textbf{or}\\
			\( X_{k+1} = U_{k+1} \text{diag}(\text{prox}_{ \Q_{2} (\iota_K)/\rho } (\sigma(\widehat{X}_{k+1}) ) V_{k+1} ^*  \)
		}
		\caption{Forward-Backward Splitting for \( \mathcal{R}_{\mu,reg} \) and \( \mathcal{R}_{K,reg} \)}
\end{algorithm}

\section{Numerical results}\label{sec num}
\begin{figure}
\begin{center}
\resizebox{60mm}{!}{
%
%
\definecolor{mycolor1}{rgb}{0.00000,0.44700,0.74100}%
\definecolor{mycolor2}{rgb}{0.85000,0.32500,0.09800}%
\begin{tikzpicture}

\begin{axis}[%
width=4.521in,
height=3.566in,
at={(0.758in,0.481in)},
xmin=0,
xmax=5,
ymin=0,
ymax=7,
axis background/.style={fill=white},
legend style={at={(0.05,0.803)}, anchor=south west, legend cell align=left, align=left, draw=white!15!black}
]
\addplot [color=mycolor1, line width=2.0pt]
  table[row sep=crcr]{%
0	2.01119474901381e-11\\
0.5	0.38178846111302\\
1	0.764760330386044\\
1.5	1.14547172676452\\
2	1.53460019512312\\
2.5	1.8934937406637\\
3	2.2917471245571\\
3.5	2.67241071635111\\
4	3.04643337570499\\
4.5	3.44241554198248\\
5	3.85791194511585\\
};
\addlegendentry{$\mathcal{Q}_2(\iota_{R_K})(X)$}

\addplot [color=mycolor2, line width=2.0pt]
  table[row sep=crcr]{%
0	0.103416239914905\\
0.5	1.08607417577159\\
1	1.9445680762641\\
1.5	2.49008418053233\\
2	3.1654571495393\\
2.5	3.8245382321735\\
3	4.4654597335831\\
3.5	5.15112591870583\\
4	5.66245446235411\\
4.5	6.18087882450405\\
5	6.94020204797098\\
};
\addlegendentry{$\|X\|_*$}

\end{axis}
\end{tikzpicture}
\resizebox{60mm}{!}{
%
%
\definecolor{mycolor1}{rgb}{0.00000,0.44700,0.74100}%
\definecolor{mycolor2}{rgb}{0.85000,0.32500,0.09800}%
\begin{tikzpicture}

\begin{axis}[%
width=4.521in,
height=3.566in,
at={(0.758in,0.481in)},
xmin=0,
xmax=5,
ymin=0,
ymax=9,
axis background/.style={fill=white},
legend style={at={(0.05,0.803)}, anchor=south west, legend cell align=left, align=left, draw=white!15!black}
]
\addplot [color=mycolor1, line width=2.0pt]
  table[row sep=crcr]{%
0	5.49810339347876e-11\\
0.5	0.424411739532788\\
1	0.843392174162233\\
1.5	1.26567338699154\\
2	1.69129109165286\\
2.5	2.15745033232739\\
3	2.51147282933911\\
3.5	2.94641845369635\\
4	3.44199238561985\\
4.5	3.88329021645677\\
5	4.30544742861236\\
};
\addlegendentry{$\mathcal{Q}_2(\iota_{R_K})(X)$}

\addplot [color=mycolor2, line width=2.0pt]
  table[row sep=crcr]{%
0	0.530013575597306\\
0.5	1.6909698276051\\
1	2.93927906972725\\
1.5	3.6186609115908\\
2	4.49212061669807\\
2.5	5.18984518976033\\
3	5.91874571490652\\
3.5	6.69460884632077\\
4	7.24401381515784\\
4.5	7.77982864132385\\
5	8.52186941367874\\
};
\addlegendentry{$\|X\|_*$}

\end{axis}
\end{tikzpicture}
\end{center}
\caption{Comparison of regularization with $\mathcal{Q}(\iota_{R_K})$ vs. $\|X\|_*$. \emph{Left:} Data fit $\|\A X - b\|^2$ vs. noise level $\|\epsilon\|$. \emph{Right:} Ground truth distance $\|X-X_0\|$ vs. noise level $\|\epsilon\|$.}
\label{fig:iotaexp}
\end{figure}
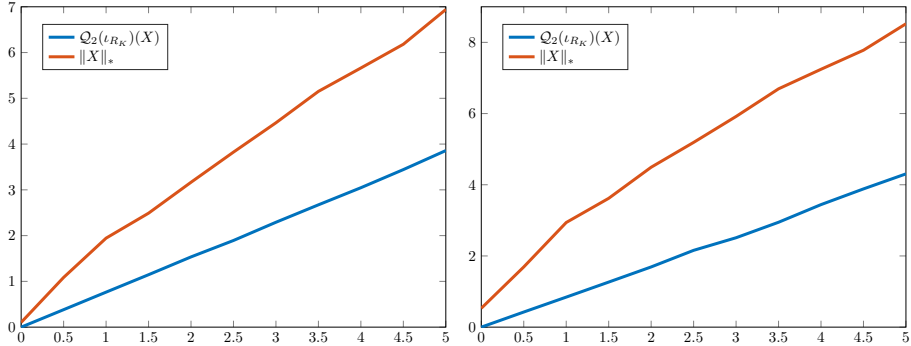

Figure~\ref{fig:iotaexp} shows the results of minimizing \eqref{fr21f} under varying levels of noise $\|\epsilon\|$. For comparison we also plot the results obtained when minimizing \eqref{eq:nuclearobj}. We use the same setup as in Section~\ref{sec:bias}, with a matrix $X_0$ of rank $4$ and size $20 \times 20$, and an operator $\A$ represented by a $300 \times 400$ matrix. The data was generated by $b = \A X_0 + \epsilon$ with varying $\|\epsilon\|$.
To ensure that the minimization of \eqref{eq:nuclearobj} gives the best possible data fit we search for the
smallest $\lambda$ giving the correct rank using a bisection strategy \cite{bazaraa-etal-2005}. The graphs in Figure~\ref{fig:iotaexp} shows the data fit and the distance to the ground solution $X_0$ for both methods.
\begin{figure}
	\begin{center}
\resizebox{100mm}{!}{\input{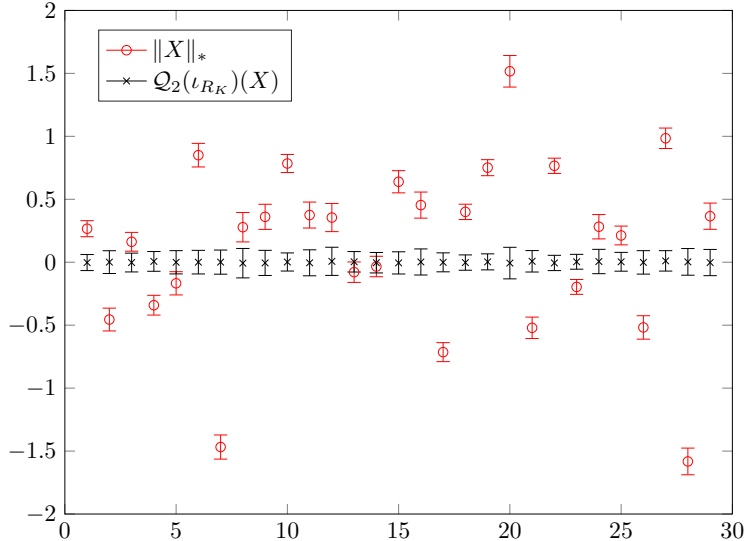}}	
\end{center}
\caption{Estimated means of $X-X_0$ ($\pm$ 2 standard deviations) for a few random elements in the estimated matrix. }
\label{fig:iotabias}
\end{figure}

In Figure~\ref{fig:iotabias} we estimated the bias of \eqref{eq:nuclearobj} and \eqref{fr21f}. With the same setup as above we generated $100$ instances where only the noise vector $\epsilon$ was varied. We used a fixed noise level $\|\epsilon\| = 1$. We then estimated means and standard deviations for the elements of $X-X_0$ from all the solutions using either \eqref{eq:nuclearobj} or \eqref{fr21f}.
The results plotted in Figure~\ref{fig:iotabias} clearly illustrate that under this noise model nuclear norm regularization gives a statistically biased estimation as opposed to $\mathcal{Q}_2(\iota_{R_K})$.

{\color{red}
	\subsection{Experiments with real data}
	In this section we test our relaxations on an application of Non-Rigid Structure from Motion (NRSfM).
	Given images of a deforming object, captured with a moving camera, the goal is to reconstruct a 3D model of the object. Typically the object is represented by a sparse 3D point cloud and we observe 2D projections of these 3D points. Under the assumption of a linear shape basis \cite{bregler-etal-cvpr-2000} the complexity of the deformation can be measured by the rank of a matrix containing the 3D point locations when the different images where captured. We will follow the setup in Dai et al. \cite{dai-etal-ijcv-2014}, where $X$ is a matrix where each row $i$ contains to the $x$-, $y$- and $z$-coordinates of the 3D points when image $i$ was captured. The matrix can be recovered using
	a formulation of the form
	\begin{equation}
	\min_X \mathcal{S}(X) + \|R X^\# - M\|^2.
	\end{equation}
	Here $X^\#$ is a reshaped version of $X$ where three consecutive rows contain the $x$-, $y$-, $z$-coodinates from an image. The matrix $R$ contains the camera rotations and $M$ contains the measured projections, see \cite{dai-etal-ijcv-2014} for further details. The term $\mathcal{S}(X)$ is some regularization of the rank of $X$. In our experiments we test the nuclear norm, $\Q_2(\mu \rank)$ and $\Q_2(\iota_{K})$.
	
	The linear operator $X \mapsto R X^\#$ does not obey the LRIP constraint since it has rank $1$ matrices in its nullspace. Hence there is no guarantee that our relaxations do not end up in local minima.
	Figure~\ref{fig:mocap} shows the results obtained for the four MOCAP sequences used in \cite{dai-etal-ijcv-2014}. We varied the regularization parameters $\lambda,$ $\mu$ and $K$ and plot the resulting rank (x-axis) versus the data fit (y-axis). It can be seen that our relaxations consistently achieve better data fits for all ranks despite potentially being susceptible to local minima. Hence even in cases where we cannot give theoretical guarantees it may still of interest to apply our non-convex relaxations because of their lack of shrinking bias.
	\begin{figure}[htb]
		\begin{center}
			\def\w{27mm}
			\begin{tabular}{cccc}
				Drink & Pickup & Stretch & Yoga \\
				\includegraphics[width=\w]{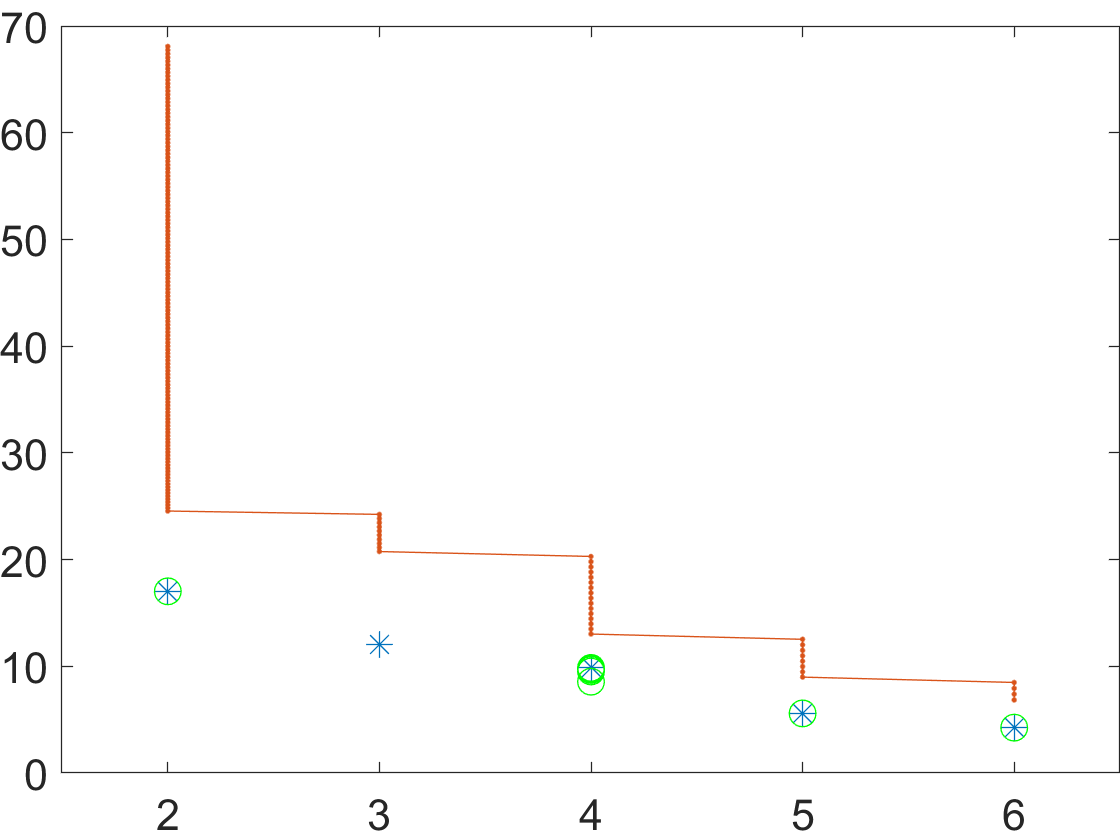} &
				\includegraphics[width=\w]{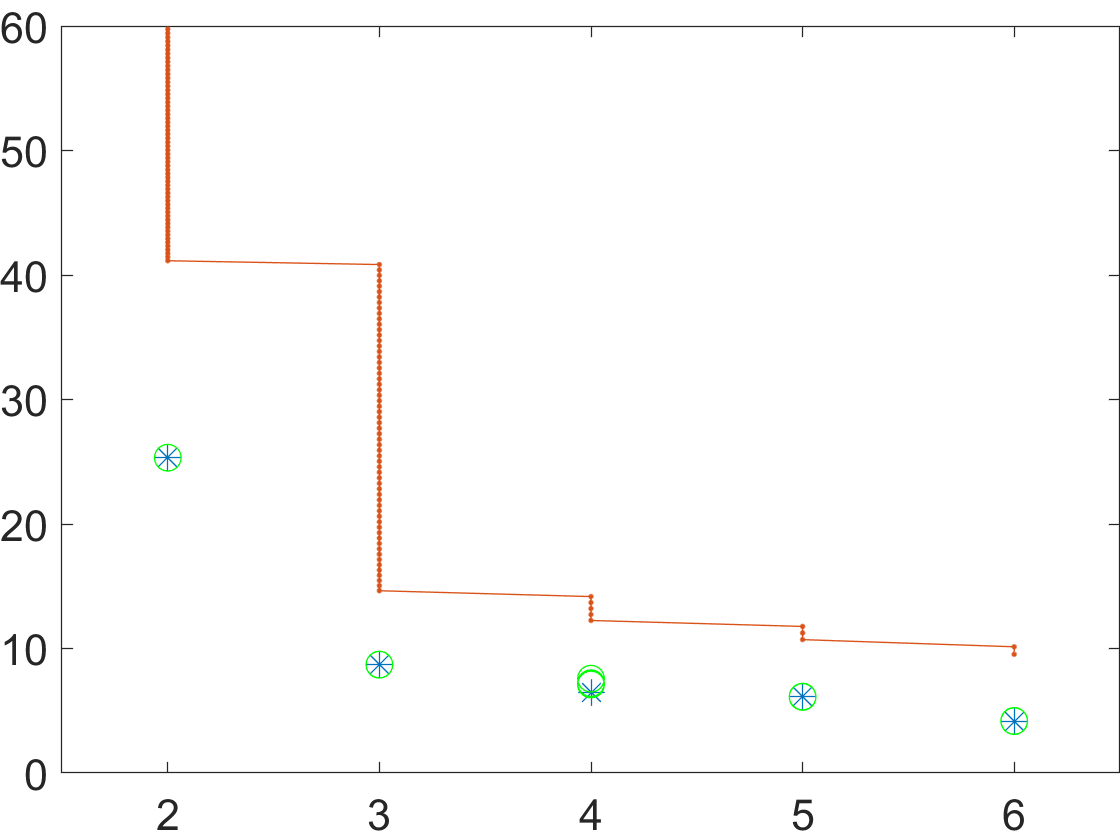} &
				\includegraphics[width=\w]{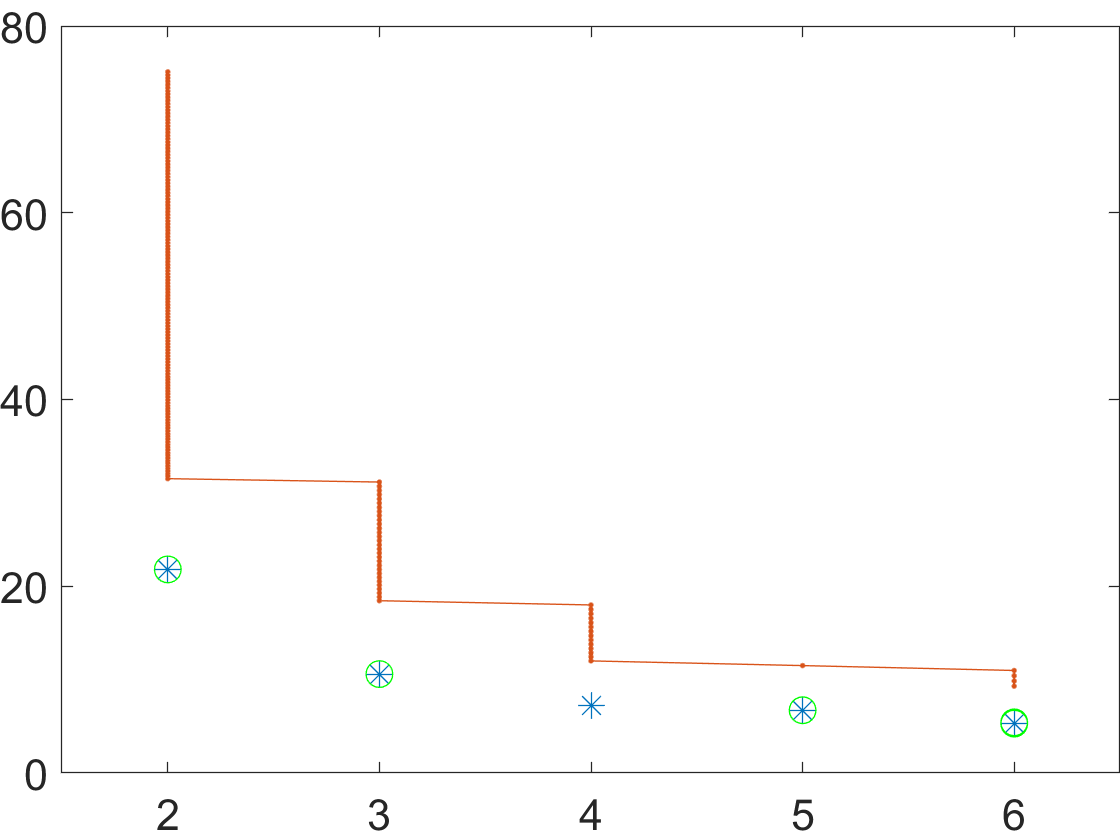} &
				\includegraphics[width=\w]{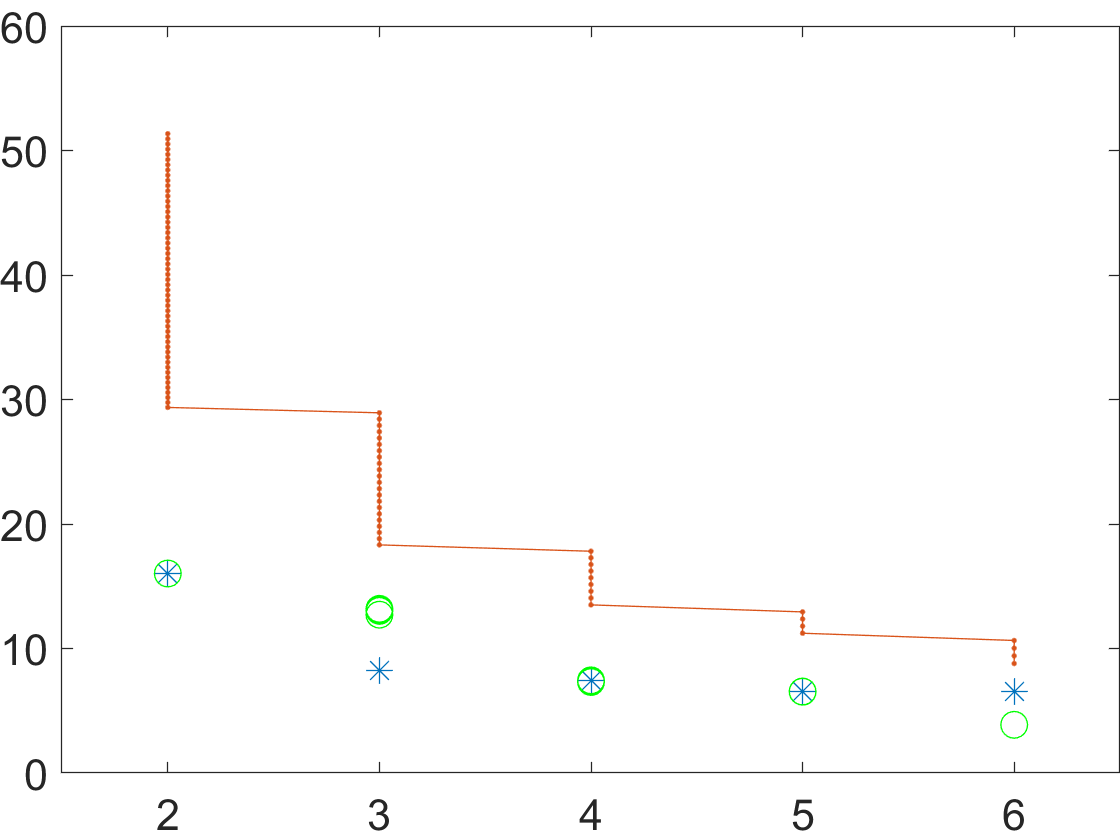}
			\end{tabular}
		\end{center}
		\caption{Results on the four MOCAP sequences used in \cite{dai-etal-ijcv-2014}. The regularizers $\Q_2(\mu \rank)$ (green) and $\Q_2(\iota_{K})$ (blue) consistently outperform the nuclear norm result (red) for all ranks despite being susceptible to local minima.)}
		\label{fig:mocap}
	\end{figure}
}

\section{Acknowledgments}
The first two authors are indebted to support from eSSENCE as well as the Crafoord foundation.
The third author would like to thank himself for a job well done.
His work has been funded by the the Swedish Research Council (grant no.\ 2018-05375) and by the Wallenberg AI, Autonomous Systems and Software Program (WASP) funded by the Knut and Alice Wallenberg Foundation.

\newpage
\bibliographystyle{plain}
\bibliography{newlib}

\begin{thebibliography}{10}

\bibitem{andersson2016operator}
Fredrik Andersson, Marcus Carlsson, and Karl-Mikael Perfekt.
\newblock Operator-lipschitz estimates for the singular value functional
  calculus.
\newblock {\em Proceedings of the American Mathematical Society},
  144(5):1867--1875, 2016.

\bibitem{attouch2013convergence}
Hedy Attouch, J{\'e}r{\^o}me Bolte, and Benar~Fux Svaiter.
\newblock Convergence of descent methods for semi-algebraic and tame problems:
  proximal algorithms, forward--backward splitting, and regularized
  gauss--seidel methods.
\newblock {\em Mathematical Programming}, 137(1-2):91--129, 2013.

\bibitem{bazaraa-etal-2005}
Mokhtar~S. Bazaraa, Hanif~D. Sherali, and C.~M. Shetty.
\newblock {\em Nonlinear Programming - Theory and Algorithms, Third Edition}.
\newblock Wiley, 2005.

\bibitem{blanchard2011compressed}
Jeffrey~D Blanchard, Coralia Cartis, and Jared Tanner.
\newblock Compressed sensing: How sharp is the restricted isometry property?
\newblock {\em SIAM review}, 53(1):105--125, 2011.

\bibitem{Bourgain2011}
Jean Bourgain, Stephen Dilworth, Kevin Ford, Sergei Konyagin, and Denka
  Kutzarova.
\newblock Explicit constructions of rip matrices and related problems.
\newblock {\em Duke Math. J.}, 159(1):145--185, 2011.

\bibitem{bregler-etal-cvpr-2000}
C.~Bregler, A.~Hertzmann, and H.~Biermann.
\newblock Recovering non-rigid 3d shape from image streams.
\newblock In {\em IEEE Conference on Computer Vision and Pattern Recognition},
  2000.

\bibitem{cai-etal-2008}
Jian-Feng Cai, Emmanuel~J. Cand\`{e}s, and Zuowei Shen.
\newblock A singular value thresholding algorithm for matrix completion.
\newblock {\em SIAM J. on Optimization}, 20(4):1956--1982, 2010.

\bibitem{candes-etal-acm-2011}
Emmanuel~J. Cand\`{e}s, Xiaodong Li, Yi~Ma, and John Wright.
\newblock Robust principal component analysis?
\newblock {\em J. ACM}, 58(3):11:1--11:37, 2011.

\bibitem{candes2011tight}
Emmanuel~J Candes and Yaniv Plan.
\newblock Tight oracle inequalities for low-rank matrix recovery from a minimal
  number of noisy random measurements.
\newblock {\em IEEE Transactions on Information Theory}, 57(4):2342--2359,
  2011.

\bibitem{candes2009exact}
Emmanuel~J Cand{\`e}s and Benjamin Recht.
\newblock Exact matrix completion via convex optimization.
\newblock {\em Foundations of Computational Mathematics}, 9(6):717--772, 2009.

\bibitem{CandesTaoRomberg2006}
Emmanuel~J. Candès, Justin Romberg, and Terence Tao.
\newblock Stable signal recovery from incomplete and inaccurate measurements.
\newblock {\em Communications on Pure and Applied Mathematics}, 59:1207--1223,
  2006.

\bibitem{canyi2015}
Lu~Canyi, Jinhui Tang, Shuicheng Yan, and Zhouchen Lin.
\newblock Nonconvex nonsmooth low-rank minimization via iteratively reweighted
  nuclear norm.
\newblock {\em IEEE Transactions on Image Processing}, 25, 10 2015.

\bibitem{carlsson2016convexification}
Marcus Carlsson.
\newblock On convexification/optimization of functionals including an l2-misfit
  term.
\newblock {\em arXiv preprint arXiv:1609.09378}, 2016.

\bibitem{carlsson2018convex}
Marcus Carlsson.
\newblock On convex envelopes and regularization of non-convex functionals
  without moving global minima.
\newblock {\em Journal of Optimization Theory and Applications}, pages 1--19,
  2019.

\bibitem{carlsson2021neumann}
Marcus Carlsson.
\newblock von {N}eumann's trace inequality for {H}ilbert--{S}chmidt operators.
\newblock {\em Expositiones Mathematicae}, 39(1):149--157, 2021.

\bibitem{Carlssonphase2019}
Marcus Carlsson and Daniele Gerosa.
\newblock On phase retrieval via matrix completion and the estimation of low
  rank psd matrices.
\newblock {\em Inverse Problems}, 36(1), 2019.

\bibitem{carlsson2020unbiased}
Marcus Carlsson, Daniele Gerosa, and Carl Olsson.
\newblock An unbiased approach to compressed sensing.
\newblock {\em Inverse Problems}, 36(11):115014, 2020.

\bibitem{dai-etal-ijcv-2014}
Yuchao Dai, Hongdong Li, and Mingyi He.
\newblock A simple prior-free method for non-rigid structure-from-motion
  factorization.
\newblock {\em International Journal of Computer Vision}, 107(2):101--122,
  2014.

\bibitem{ge2017no}
Rong Ge, Chi Jin, and Yi~Zheng.
\newblock No spurious local minima in nonconvex low rank problems: A unified
  geometric analysis.
\newblock In {\em Proceedings of the 34th International Conference on Machine
  Learning-Volume 70}, pages 1233--1242. JMLR. org, 2017.

\bibitem{grussler-etal-arxiv-2016}
Christian Grussler, Anders Rantzer, and Pontus Giselsson.
\newblock Low-rank optimization with convex constraints.
\newblock {\em CoRR}, abs/1606.01793, 2016.

\bibitem{gu-2016}
Shuhang Gu, Qi~Xie, Deyu Meng, Wangmeng Zuo, Xiangchu Feng, and Lei Zhang.
\newblock Weighted nuclear norm minimization and its applications to low level
  vision.
\newblock {\em International Journal of Computer Vision}, 121, 07 2016.

\bibitem{hu-etal-pami-2013}
Y.~Hu, D.~Zhang, J.~Ye, X.~Li, and X.~He.
\newblock Fast and accurate matrix completion via truncated nuclear norm
  regularization.
\newblock {\em IEEE Transactions on Pattern Analysis and Machine Intelligence},
  35(9):2117--2130, 2013.

\bibitem{larsson-olsson-ijcv-2016}
Viktor Larsson and Carl Olsson.
\newblock Convex low rank approximation.
\newblock {\em International Journal of Computer Vision}, 120(2):194--214,
  2016.

\bibitem{lewis1995convex}
Adrian~S Lewis.
\newblock The convex analysis of unitarily invariant matrix functions.
\newblock {\em Journal of Convex Analysis}, 2(1):173--183, 1995.

\bibitem{li2015global}
Guoyin Li and Ting~Kei Pong.
\newblock Global convergence of splitting methods for nonconvex composite
  optimization.
\newblock {\em SIAM Journal on Optimization}, 25(4):2434--2460, 2015.

\bibitem{mohan2010iterative}
Karthik Mohan and Maryam Fazel.
\newblock Iterative reweighted least squares for matrix rank minimization.
\newblock In {\em Annual Allerton Conference on Communication, Control, and
  Computing}, pages 653--661, 2010.

\bibitem{oh-etal-pami-2016}
T.~H. Oh, Y.~W. Tai, J.~C. Bazin, H.~Kim, and I.~S. Kweon.
\newblock Partial sum minimization of singular values in robust pca: Algorithm
  and applications.
\newblock {\em IEEE Transactions on Pattern Analysis and Machine Intelligence},
  38(4):744--758, 2016.

\bibitem{oymak-etal-2015}
S.~Oymak, A.~Jalali, M.~Fazel, Y.~C. Eldar, and B.~Hassibi.
\newblock Simultaneously structured models with application to sparse and
  low-rank matrices.
\newblock {\em IEEE Transactions on Information Theory}, 61(5):2886--2908,
  2015.

\bibitem{oymak2011simplified}
Samet Oymak, Karthik Mohan, Maryam Fazel, and Babak Hassibi.
\newblock A simplified approach to recovery conditions for low rank matrices.
\newblock In {\em IEEE International Symposium on Information Theory
  Proceedings (ISIT)}, pages 2318--2322, 2011.

\bibitem{parekh-selesnick-spl-2016}
A.~{Parekh} and I.~W. {Selesnick}.
\newblock Enhanced low-rank matrix approximation.
\newblock {\em IEEE Signal Processing Letters}, 23(4):493--497, 2016.

\bibitem{recht-etal-siam-2010}
Benjamin Recht, Maryam Fazel, and Pablo~A. Parrilo.
\newblock Guaranteed minimum-rank solutions of linear matrix equations via
  nuclear norm minimization.
\newblock {\em SIAM Rev.}, 52(3):471--501, August 2010.

\bibitem{wang2019global}
Yu~Wang, Wotao Yin, and Jinshan Zeng.
\newblock Global convergence of admm in nonconvex nonsmooth optimization.
\newblock {\em Journal of Scientific Computing}, 78(1):29--63, 2019.

\bibitem{zhu2018global}
Zhihui Zhu, Qiuwei Li, Gongguo Tang, and Michael~B Wakin.
\newblock Global optimality in low-rank matrix optimization.
\newblock {\em IEEE Transactions on Signal Processing}, 66(13):3614--3628,
  2018.

\end{thebibliography}

\end{document}